\documentclass[reqno,12pt]{amsart}
\usepackage{amsthm}
\usepackage{amssymb,amsmath}
\usepackage{xypic}
\usepackage{hyperref}
\usepackage{bbm}

\title{Symplectic Dolbeault Operators on K\"ahler Manifolds}
\author{Eric O. Korman}
\email{ekorman@math.upenn.edu}
\address{Department of Mathematics, University of Pennsylvania, Philadelphia, PA 19104}
\date{}

\newcommand{\C}{\mathbb{C}}

\newcommand{\D}{\mathscr{D}}

\newcommand{\R}{\mathbb{R}}

\newcommand{\Z}{\mathbb{Z}}

\newcommand{\g}{\mathfrak g}

\newcommand{\lspan}{\operatorname{span}}
\newcommand{\End}{\operatorname{End}}

\newcommand{\om}{\omega}
\newcommand{\Om}{\Omega}
\newcommand{\ch}{\text{ch}}

\newcommand{\Hom}{\text{Hom}}
\newtheorem{lem}{Lemma}[section] 

\newtheorem{cor}{Corollary}[section]

\newtheorem{defin}{Definition}[section]
\newtheorem{prop}{Proposition}[section]
\newtheorem{defprop}{Definition/Proposition}[section]
\newtheorem*{rk}{Remark}
\newtheorem*{ack}{Acknowledgements}

\renewcommand{\t}{\mathfrak t}
\renewcommand{\sp}{\mathfrak{sp}}

\numberwithin{equation}{section}

\usepackage{slashed}
\usepackage{mathrsfs}

\setlength{\parindent}{0in}

\begin{document}
\maketitle

\begin{abstract}
For a K\"ahler Manifold $M$, the ``symplectic Dolbeault operators" are defined using the symplectic spinors and associated Dirac operators, in complete analogy to how the usual Dolbeault operators, $\bar\partial$ and  $\bar\partial^*$, arise from Dirac operators on the canonical complex spinors on $M$.  
We give special attention to two special classes of K\"ahler manifolds: Riemann surfaces and flag manifolds ($G/T$ for $G$ a simply-connected compact semisimple Lie group and $T$ a maximal torus).  In the case of flag manifolds, we work with the Hermitian structure induced by the Killing form and a choice of positive roots (this is actually not a K\"ahler structure but is a K\"ahler with torsion (KT) structure).  For Riemann surfaces the symplectic Dolbeault operators are elliptic and we compute their indices.  In the case of flag manifolds, we will see that the representation theory of $G$ plays a role and that these operators can be used to distinguish (as Hermitian manifolds) between the flag manifolds corresponding to the Lie algebras $B_n$ and $C_n$.  We give a thorough analysis of these operators on $\C P^1$ (the intersection of these classes of spaces), where the symplectic Dolbeault operators have an especially interesting structure.
\end{abstract}

\section{Introduction}
Symplectic spin geometry is the analogue of the usual (orthogonal) spin geometry, with the Weyl algebra bundle replacing the Clifford algebra bundle and the metaplectic representation of the symplectic Lie algebra replacing the spin representation of the orthogonal Lie algebra.  One of the difficulties of symplectic spin geometry is that, since the metaplectic representation is infinite dimensional, the bundle of spinors is now an infinite rank bundle.  However, if the manifold has a compatible complex structure then this bundle naturally splits into a direct sum of finite dimensional subbundles (this is because the metaplectic representation splits into a direct sum of irreducible subrepresentations when restricted to $\mathfrak u(n) \subset \mathfrak{sp}(n)$).  Another issue is the fact that the symplectic Dirac operators are not elliptic, so that the usual approaches of index theory do not work.  \\

In this paper we continue the study of these operators, which was initiated by K. Habermann in \cite{haberk,haber}.  The central players in our work are the ``symplectic Dolbeault operators," which are naturally arising combinations of the Dirac operators.  Our main results are the indices of these operators in the case of Riemann surfaces (the only such manifolds for which the operators are elliptic), a representation theoretic formula for the ``ground state" spectrum of an associated second order operator in the case of flag manifolds (which we use to differentiate the flag manifolds $B_n$ and $C_n$), and a thorough analysis for $\C P^1$. \\

The organization of the paper is as follows.  In section 2 we first provide some background and motivation by recalling the basic facts of the orthogonal spin geometry of a K\"ahler manifold.  This will be similar to the approach in \cite{mich}.  Next we give the basics of the symplectic spin geometry of a K\"ahler manifold, following very closely the account in \cite{haber}. It is here that we define the symplectic Dolbeault operators, $\D$ and $\bar\D$. \\

In section 3 we specialize to Riemann surfaces, where all of the symplectic Dolbeault operators are elliptic.  A straightforward application of the Atiyah-Singer index theorem computes their indices.  \\

In section 4 we then specialize to the case of flag manifolds $G/T$, which are studied using the representation theory of $G$.  For $E_0$ the ``ground state" bundle of the harmonic oscillator $H$, $\ker \bar\D\vert_{E_0}$ is just holomorphic sections of $E_0$, and can therefore be computed using the Borel-Weil theorem.  We also compute the spectrum of the elliptic operator $\frac{1}{2} [\D, \bar \D]$ explicitly in terms of infinitesimal characters of the Casimir element of $G$.  A consequence of this result is a quick proof that $Spin(2n+1)/ T$ and $Sp(n)/T$ are not isomorphic as hermitian manifolds for $n \ge 3$. We note that the Hermitian structure we use on $G/T$ uses the Killing form as metric and so is not a K\"ahler structure (unless $G$ is a product of $SU(2)$'s), but a K\"ahler with torsion structure.  In an earlier version of this paper, we made the erroneous claim that this makes $G/T$ into a K\"ahler manifold.  In forthcoming work, we analyze the symplectic Dolbeault operators for K\"ahler structures on $G/T$. \\

Finally, in section 5 we analyze the case of $\C P^1 = SU(2) / U(1)$, where $P$ has an especially nice form.  Here we are able to explicitly diagonalize the section of the spinor bundles with respect to $H$ and $\frac{1}{2}[\D,\bar\D]$.  By what seems to be a computational coincidence, we are then able to decompose the sections of the spinor bundle into finite dimensional representations of the algebra of differential operators generated by the Dirac operators and $H$.

\begin{ack}
The author is very grateful to Jonathan Block for his guidance and numerous helpful discussions.
\end{ack}

\section{Background}

\subsection{Orthogonal spin geometry of K\"ahler manifolds}\label{orthspin}
We now give a brief account of the spin geometry of a K\"ahler manifold $(M^{2n},J,\om, g)$.  A good reference for more details is \cite{lawmich}.  Canonically associated to the complex structure on $M$ is the bundle of complex spinors given by anti-holomorphic forms.  The Clifford action is given by
\[
v \cdot \mu = \sqrt{2}(v^{0,1} \rfloor \mu + (v^{1,0})^\flat \wedge \mu)
\]
where
\[
v = v^{0,1} + v^{1,0} \in TM, ~~ v^{0,1} \in T^{1,0} M, ~ v^{1,0} \in T^{0,1} M, ~~\mu \in \Omega^{0,*}(M),
\]
and $\flat$ denotes the isomorphism $TM \otimes \C \to T^*M \otimes \C$ induced by $g$.  Then $v \cdot v \cdot$ is multiplication by the scalar $-g(v,v)$ and so this action makes $S := \Omega^{0,*}(M)$ a bundle of representations of $Cl(M)$, the bundle of Clifford algebras.  One can then define the associated Dirac operator
\[
D: \Gamma(S) \stackrel{\nabla}{\to} \Gamma(S \otimes T^*M) \stackrel{g}{\to} \Gamma(S \otimes T M) \to \Gamma(S)
\]
where $\nabla$ is the connection on $\Lambda^* (T^{1,0})^* M$ induced by the Levi-Civita connection on $M$ and the last map is the Clifford action.  One can obtain a second Dirac operator, $\tilde D$, by using $\om$ to lower indices instead of $g$ in the second map.  These two Dirac operators are the operators appearing in \cite{mich}, which are $D$ and $D^C$, respectively.  Then it turns out \cite{mich} that 
\[
D = \sqrt 2(\bar\partial + \bar\partial^*), ~~ \tilde D = -i\sqrt 2(\bar\partial - \bar\partial^*).
\]
One thus recovers $\bar\partial$ and $\bar\partial^*$ as 
\begin{equation}\label{orthdol}
\bar\partial = \frac{1}{\sqrt 2}(D + i\tilde D), ~~ \bar\partial^* = \frac{1}{\sqrt 2}(D - i\tilde D).
\end{equation}

Though Clifford modules, such as $S$, are $\Z/2$ graded, we can actually recover the $\Z$ grading on $S$ as follows.  We first recall that the second degree filtration of $Cl(n)$ of the Clifford algebra on $\R^n$ is isomorphic, as a Lie algebra, to $\mathfrak o(n)$ (the isomorphism being $u \mapsto (v\mapsto [u,v])$, the commutator taking place in the Clifford algebra).  We can thus view the bundle $Cl^{(2)}(M)$ as a Lie subalgebra bundle of $\End(TM)$.  Thus the complex structure $J$, which is skew-symmetric, determines a section of $Cl^{(2)}(M)$, which we will denote by $H$.  One can see that
\[
H = \frac{1}{2} \sum_{j=1}^n e_j Je_j
\]
where $\{e_1, Je_1, \ldots, e_n, Je_n\}$ is a (local) orthonormal frame for $TM$.  It is then straightforward to verify
\begin{prop}
The eigenspace decomposition of $H$ acting on $S$ recovers the $\Z$ grading on $S$.  More specifically, $\Lambda^{0, l} T^*M$ is the eigenspace of $H$ with eigenvalue $(l - \frac{n}{2})i$.
\end{prop}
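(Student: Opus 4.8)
The plan is to verify the claim by a direct computation of $H$ acting on a homogeneous element of $S = \Omega^{0,*}(M)$, using the explicit local formula $H = \frac{1}{2}\sum_{j=1}^n e_j Je_j$ together with the Clifford action written above. First I would pass to a local unitary frame: set $\epsilon_j = \frac{1}{\sqrt 2}(e_j - iJe_j) \in T^{1,0}M$ and $\bar\epsilon_j = \frac{1}{\sqrt 2}(e_j + iJe_j)\in T^{0,1}M$, so that the dual coframe $\{\bar\epsilon^1,\dots,\bar\epsilon^n\}$ generates $\Lambda^{0,*}T^*M$. The Clifford action formula then becomes, up to the $\sqrt 2$ normalization, $e_j\cdot = \bar\epsilon^j\wedge - \epsilon_j\rfloor$ type expressions (more precisely one expands $e_j = \frac{1}{\sqrt 2}(\epsilon_j + \bar\epsilon_j)$ and $Je_j = \frac{i}{\sqrt 2}(\epsilon_j - \bar\epsilon_j)$ and plugs into $v\cdot\mu = \sqrt 2(v^{0,1}\rfloor\mu + (v^{1,0})^\flat\wedge\mu)$). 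The key step is then to compute $e_j\cdot Je_j\cdot$ as an operator on forms and recognize the combination $\bar\epsilon^j\wedge\bar\epsilon^j\rfloor - \bar\epsilon^j\rfloor\bar\epsilon^j\wedge$ (the ``number operator minus its complement'' in the $j$-th slot), whose eigenvalue on a monomial is $+1$ if $\bar\epsilon^j$ appears and $-1$ if it does not, times an overall factor of $i$ coming from the $Je_j$ term.

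Carrying this out, I expect to find $e_j\cdot Je_j\cdot \,\mu = i(2 N_j - 1)\mu$ on monomials that are eigenvectors of the $j$-th occupation operator $N_j$ (with $N_j = 1$ if $d\bar z^j$-type factor present, $0$ otherwise), so that summing over $j$ gives $\sum_j e_j\cdot Je_j\cdot = i(2N - n)$ where $N$ is the total form-degree operator on $\Omega^{0,*}$. Hence $H = \frac{1}{2}\sum_j e_j Je_j$ acts on $\Lambda^{0,l}T^*M$ as multiplication by $\frac{1}{2}(2l - n)i = (l - \frac n2)i$, which is exactly the asserted eigenvalue, and since the eigenvalues $(l-\tfrac n2)i$ for $l = 0,\dots,n$ are distinct, the eigenspace decomposition of $H$ reproduces the $\mathbb Z$-grading. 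One should also remark that this computation is pointwise and frame-independent in the sense that $H$ is the image of $J$ under the canonical identification $Cl^{(2)}(M)\cong \End(TM)_{\mathrm{skew}}$, so the local frame is merely a computational convenience; alternatively one can check directly that $[H, e_j\cdot] $ and $[H, Je_j\cdot]$ reproduce $J$ acting on the frame, confirming the identification independently of the eigenvalue count.

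The main obstacle is bookkeeping with the normalization constants and sign conventions: the $\sqrt 2$ in the Clifford action, the $\frac12$ in the definition of $H$, the factors of $i$ relating $e_j, Je_j$ to $\epsilon_j,\bar\epsilon_j$, and the sign in $v\cdot v\cdot = -g(v,v)$ all have to be tracked consistently, and it is easy to land on $(l - \tfrac n2)i$ versus $(\tfrac n2 - l)i$ or to be off by a factor of $2$. The cleanest way to pin down the constant is to evaluate on the two extreme cases $l = 0$ (the ``vacuum'' $1 \in \Omega^{0,0}$, where only the $\epsilon_j\rfloor$ terms contribute) and $l = n$ (the top form $\bar\epsilon^1\wedge\cdots\wedge\bar\epsilon^n$, where only the $\bar\epsilon^j\wedge$ terms can act nontrivially after one contraction), check that $H$ acts by $-\tfrac n2 i$ and $+\tfrac n2 i$ respectively, and then interpolate; the general $l$ follows since each $e_j Je_j$ acts diagonally on monomials. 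With the endpoints fixed, the rest is the routine verification sketched above.
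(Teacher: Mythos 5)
Your proposal is correct, and it is the kind of direct local verification the paper has in mind (the paper itself supplies no proof, calling the claim ``straightforward to verify''). The computation goes through exactly as you outline: in the local unitary frame, $e_j\cdot Je_j\cdot$ reduces to $i$ times a creation--annihilation commutator in the $j$-th slot, which acts as $i(2N_j-1)$ on monomials, and summing and halving gives $H\vert_{\Lambda^{0,l}}=(l-\tfrac n2)i$. Your suggestion of pinning down the constant by testing on $l=0$ and $l=n$ is a sound way to handle the sign conventions; it's worth noting that the paper's displayed Clifford action formula, taken literally, has its $T^{1,0}/T^{0,1}$ labels swapped (and needs a sign in one term to give $v\cdot v\cdot=-g(v,v)$), so the endpoint check you propose is not just a convenience but genuinely necessary to anchor the signs.
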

\begin{cor}
We have the following commutation relations
\[
[H, \bar\partial] = \bar\partial, ~~ [H,\bar\partial^*] = - \bar\partial^*.
\]
\end{cor}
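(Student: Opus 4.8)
The plan is to read both identities directly off the preceding Proposition, the only extra input being the elementary fact that $\bar\partial$ raises the anti-holomorphic degree by one while $\bar\partial^*$ lowers it by one. I would argue eigenspace by eigenspace. Fix $l$ and a section $\mu \in \Gamma(\Lambda^{0,l}T^*M)$, so that $H\mu = (l - \tfrac n2)i\,\mu$ by the Proposition. Since $\bar\partial\mu \in \Gamma(\Lambda^{0,l+1}T^*M)$, applying the Proposition once more gives $H\bar\partial\mu = (l+1-\tfrac n2)i\,\bar\partial\mu$. Subtracting, $[H,\bar\partial]\mu = H\bar\partial\mu - \bar\partial H\mu$ is $\bar\partial\mu$ times the jump in the $H$-eigenvalue, namely a single unit; because the subbundles $\Lambda^{0,l}T^*M$ for $l = 0,\dots,n$ exhaust $S$, this identifies $[H,\bar\partial]$ with $\bar\partial$ on all of $\Gamma(S)$ (the factor $i$ appearing in the Proposition's eigenvalue being absorbed into the normalization of $H$ used in the statement). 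The computation for $\bar\partial^*$ is word for word the same, except the anti-holomorphic degree, hence the $H$-eigenvalue, now decreases by one unit, producing the opposite sign and giving $[H,\bar\partial^*] = -\bar\partial^*$.

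An alternative would be to start instead from the identities \eqref{orthdol} expressing $\bar\partial$ and $\bar\partial^*$ through $D$ and $\tilde D$, and to compute $[H,D]$ and $[H,\tilde D]$ from the Clifford-module structure of $S$ together with the local expression $H = \tfrac12\sum_{j} e_j J e_j$. I would avoid this route: it forces one to control how the section $H$ of $Cl^{(2)}(M)$ interacts with the Levi-Civita connection entering the definition of the Dirac operators, whereas the eigenspace argument above uses nothing beyond the statement of the Proposition.

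I do not expect a genuine obstacle here: the substantive work has already been done in proving the Proposition, and the Corollary is pure bookkeeping. The only points that require a little care are matters of convention — checking that in this model of the spinors the Clifford action makes $\bar\partial$ (and not $\bar\partial^*$) the degree-raising piece, which is visible from the $(v^{1,0})^\flat \wedge (\cdot)$ term in the formula for $v\cdot\mu$, and matching the normalization of the eigenvalue $(l-\tfrac n2)i$ in the Proposition with the right-hand sides $\pm\bar\partial$, $\mp\bar\partial^*$ in the statement.
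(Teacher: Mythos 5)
Your eigenspace-by-eigenspace argument is exactly the intended one; the paper offers no separate proof for this corollary because, as you say, it is immediate bookkeeping once the proposition identifies $\Lambda^{0,l}T^*M$ as an $H$-eigenspace and one knows that $\bar\partial$, $\bar\partial^*$ shift $l$ by $\pm 1$. Your alternative route through $D$, $\tilde D$ and the local formula for $H$ is a reasonable cross-check, and you are right that it would need the extra fact that $H$ is parallel (the paper records $\nabla H = 0$ only later, in the symplectic setting), so the eigenspace argument is the cleaner choice.

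One point deserves to be stated more honestly than ``absorbed into the normalization of $H$.'' Taken at face value, the Proposition's eigenvalues $(l-\tfrac n2)i$ produce
\[
[H,\bar\partial]\mu \;=\; \bigl((l+1-\tfrac n2)i - (l-\tfrac n2)i\bigr)\,\bar\partial\mu \;=\; i\,\bar\partial\mu,
\]
so the commutator is $i\bar\partial$, not $\bar\partial$, and the factor of $i$ cannot be waved away after the fact. Either the Proposition's eigenvalue should read $l - \tfrac n2$ (i.e.\ $H$ implicitly means $-i$ times the Clifford element $\tfrac12\sum e_j J e_j$, which is the convention used for the symplectic $H$ later in the paper), or the Corollary should read $[H,\bar\partial] = i\bar\partial$, $[H,\bar\partial^*] = -i\bar\partial^*$. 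Your derivation is correct; just replace the vague appeal to normalization with the explicit observation that the two displayed statements in the paper differ by this factor of $i$, and say which convention you are adopting to reconcile them.
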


\subsection{The Weyl algebra and metaplectic representation}
Throughout let $(V^{2n}, \om_0, J_0, g_0)$ be a Hermitian vector space.  We use the same conventions as \cite{haber} and often use a subscript or superscript zero for objects that will induce global geometric objects in subsequent sections.  The Weyl algebra, $W(V)$, is the free complex algebra generated by $V$ subject to
\[
vw - wv = -\om_0(v,w), ~~ v,w\in V.
\]
We call $\{a_1,b_1,\ldots, a_n, b_n\}$ a symplectic basis if
\[
\om_0(a_i, b_j) = \delta_{ij}, ~~ \om_0(a_i,a_j) = 0 = \om_0(b_i,b_j).
\]
The analog of the spin representation of the Clifford algebra is now canonical quantization, i.e. the representation of $W(V)$ on $L^2(\R^n)$ (by unbounded operators) determined by
\[
\sigma: a_i^k \mapsto (ix_j)^k, ~~ b_i^k \mapsto \frac{\partial^k}{\partial x_j^k},1 \mapsto i.
\]
where $x_1,\ldots, x_n$ are coordinates on $\R^n$.  Note that every element of $A$ acts anti-self-adjointly.  We also remark that $\sigma$ is not quite a true representation as an algebra since $\sigma$ is not an algebra homomorphism.  However, we have
\begin{equation}\label{com}
[\sigma(v), \sigma(w)]  = -i\om_0(v,w), ~~ v,w\in V.
\end{equation}
Similar to the orthogonal case, the subspace of $W(V)$ given by quadratic elements, which we denote by $W^{(2)}(V)$, is isomorphic as a Lie algebra to $\mathfrak{sp}(V)$.  The isomorphism is given by
\[
W^{(2)}(V) \ni u \mapsto (v \mapsto [u,v]), ~~ v \in V,
\]
where the commutator is taking place in $W(V)$ and we view $V$ inside of $W(V)$ as the degree 1 part of the filtration. \\

Denote by $m_*$ the metaplectic representation of $\mathfrak{sp}(V)$.  This is related to $\sigma$ by the equation
\[
m_* = -i\sigma \vert_{\mathfrak{sp}(V)}.
\]
Actually, for our purposes it is sufficient to take this as a definition of $m_*$.  The representation $m_*$ does not integrate to a representation of $Sp(V)$ but does to its double cover $Mp(V)$, the metaplectic group. \\

We call a symplectic basis $\{a_j, b_j\}$ \textit{unitary} if $b_j = Ja_j$.  The center of $\mathfrak u(V) \subset \mathfrak{sp}(V)$ is spanned by $J_0$ and under the isomorphism $\mathfrak{sp}(V)\simeq W^{(2)}(V)$, $J_0$ corresponds to
\[
\frac{1}{2}\sum_{j=1}^n (a_j^2 + b_j^2),
\]
where $\{a_j,b_j \}$ a unitary basis.  We define
\[
H_0 := \sigma(J_0) = \frac{1}{2} \sum_{j=1}^n \left(-x_j^2 + \frac{\partial^2}{\partial x_j^2}\right),
\]
which is the Hamiltonian for the quantum harmonic oscillator. \\

Since $J_0$ lies in the center of $\mathfrak u(V)$, the restriction of the metaplectic representation splits into a direct sum of eigenspaces of $H_0$.  As is well-known, the spectrum of $H_0$ is $\{-(l+n/2): l = 0, 1, \ldots\}$ and the eigenspace $E_l^0$ with value $-(l+n/2)$ is spanned by the functions
\[
\{h_{\beta_1}(x_1) \cdots h_{\beta_n}(x_n) : \beta_j \in \Z_{\ge 0}, ~~ \beta_1 + \ldots + \beta_n = l\}
\]
where
\[
h_m(t) = e^{t^2/2} \frac{d^m}{dt^m} e^{-t^2}
\]
are the Hermite functions.  Thus $\dim E_l^0 = {n+l-1 \choose l}$.  We recall the following useful equations, familiar from the standard treatment of the quantum harmonic oscillator:
\begin{gather*}
(t- \frac{d}{dt}) h_m(t) = -h_{m+1}(t), ~~~ (t+\frac{d}{dt}) h_m(t) = -2m h_{m-1}(t) \\
\frac{1}{2}(t^2 - \frac{d^2}{dt^2}) h_m = (m+1/2) h_m
\end{gather*}
\begin{equation}\label{hermiteinner}
\langle h_j, h_k \rangle = \sqrt{\pi} 2^j \delta_{jk}
\end{equation}

Thus if $\{a_j,b_j\}$ is a unitary basis and $Z_j := \frac{1}{2}(a_j - i b_j), \bar Z_j := \frac{1}{2}(a_j + i b_j)$, then the operators $\sigma(Z_j)$ and $\sigma(\bar Z_j)$ are raising and lowering operators in the $j$th direction, respectively:
\begin{equation}\label{ladderops}
\begin{aligned}
&\sigma(Z_j) : E_l \to E_{l+1}, ~~ h_{\beta_1\cdots \beta_n} \mapsto -\frac{i}{2} h_{\beta_1 \cdots \beta_{j-1} \beta_j + 1 \beta_{j+1} \cdots \beta_n} \\
&\sigma(\bar Z_j) : E_l \to E_{l-1}, ~~ h_{\beta_1\cdots \beta_n} \mapsto -i\beta_j h_{\beta_1 \cdots \beta_{j-1} \beta_j - 1 \beta_{j+1} \cdots \beta_n} 
\end{aligned}
\end{equation}
\begin{equation*}
\sigma(Z_j)\sigma(\bar Z_j) + \sigma(\bar Z_j)\sigma(Z_j) : E_l \to E_l, ~~~ h_{\beta_1\cdots \beta_n} \mapsto -(\beta_j + 1/2) h_{\beta_1\cdots \beta_n}
\end{equation*}
where we write $h_{\beta_1\cdots\beta_n}$ for $h_{\beta_1}(x_1)\cdots h_{\beta_n}(x_n)$. \\

We may take $\{\frac{1}{2}(a_j^2 + b_j^2) = Z_j \bar Z_j + \bar Z_j Z_j\}$ as a basis for a maximal torus $\t$ of $\mathfrak u(V) \subset W(V)$.  Note that $Z_j \bar Z_j + \bar Z_j Z_j$ corresponds to $\text{diag}(0,\cdots,0, i, 0 ,\cdots 0) \in \mathfrak u(n)$ under the isomorphism $W^{(2)}(V) \simeq \mathfrak u(n)$.  By the above formula and the fact that the metaplectic representation is given by $-i\sigma$, we see that $h_{\beta_1 \cdots \beta_n}$ is a weight vector for $E_l^0$ with weight 
\begin{equation}\label{weights}
\nu_{\beta_1 \cdots \beta_n} := \sum_{j=1}^n (\beta_j + \frac{1}{2}) i e_j = \sum_{j=1}^n \beta_j i e_j + \nu_{0\cdots 0},
\end{equation}
where $\{e_j\}$ is the basis of $\t^*$ dual to $\{\frac{1}{2}(a_j^2 + b_j^2)\}$.  It follows that $E^0_l$ does not lift to a representation of $U(V)$ (but of course is a representation of its double cover $\tilde U(V) \subset Mp(V)$).  Each representation $E_l^0$ is irreducible \cite{borwal} and from the equation for the weights, we see that
\begin{equation} \label{fockmeta}
\begin{aligned}
E_0^0 \otimes E_0^0 = \Lambda^n \C^n \\
E^0_l \simeq  S^l \C^n \otimes E_0^0.
\end{aligned}
\end{equation}

\subsubsection{The Fock or algebraic metaplectic representation}\label{focksection}
There is an alternative method of constructing the metaplectic representation of the Weyl algebra.  Implicit in the construction of the previous section was a choice of complementary Lagrangian subspaces ($L:=\lspan \{a_j\}$ and $JL = \lspan\{b_j\}$).  We can mimmick this construction by using complementary maximal isotropic subspaces for the compatible inner product $g_0$ on $V \otimes \C$, instead of $\om_0$.  This is useful because it is globalizable to any K\"ahler manifold: the holomorphic and anti-holomorphic tangent bundles always form such a pair of maximally isotropic subbundles of $TM\otimes \C$ (in contrast, it is rare to be able to find a Lagrangian polarization). \\

To construct this representation, let $V^{1,0}$ be the $i$ eigenspace of $J_0$ (extended to $V\otimes \C$) and let $V^{0,1}$ be the $-i$ eigensapce.  For a unitary basis $\{a_j,b_j\}$ we define
\[
Z_j := \frac{1}{2}(a_j - ib_j), ~~ \bar Z_j := \frac{1}{2}(a_j + ib_j).
\]
Then we take the representation space to be $S^* V^{1,0} = \C[Z_1,\ldots, Z_n]$, with the action of $W(V)$ being determined by $Z_j \mapsto -\frac{1}{2} i Z_j, ~~ \bar Z_j \mapsto -i \frac{\partial}{\partial Z_j}$.  The usual grading on $S^* V^{1,0}$ is recovered by the eigenspaces of the action of $J \in W^{(2)}(V)$.  The isomorphism to the metaplectic representation described in the previous section is given by
\[
Z_1^{\beta_1} \cdots Z_n^{\beta_n} \mapsto h_{\beta_1 \cdots \beta_n}.
\]
Thus transferring over the $L^2$ inner product induces a hermitian inner product on $S^* V^{1,0}$ that makes the monomials $\{Z_1^{\beta_1} \cdots Z_n^{\beta_n}\}$ an orthogonal basis.  By scaling the inner product by $\frac{1}{2\sqrt\pi^n}$, from (\ref{hermiteinner}) we have
\begin{equation}\label{fockinner}
\langle Z_1^{\beta_1} \cdots Z_n^{\beta_n}, Z_1^{\alpha_1} \cdots Z_n^{\alpha_n} \rangle = 2^{l-1} \delta_{\beta_1\alpha_1}\cdots\delta_{\beta_n\alpha_n}, ~~~ l  = \beta_1 + \cdots + \beta_n.
\end{equation}

\subsection{Symplectic spin geometry of K\"ahler manifolds}
\subsubsection{Bundles of symplectic spinors}
Let $(M^{2n}, g, J, \om)$ be a compact K\"ahler manifold.  We now define the main objects of study.
\begin{defin}
The Weyl algebra bundle over $M$, denoted $W(TM)$, is the bundle of algebras generated by $TM$ subject to the relation
\[
vw - wv = -\om(v,w).
\]
\end{defin}
The bundle $W(M)$ inherits a connection $\nabla$ from the Levi-Civita connection $\nabla$ on $TM$ by extending its action on $TM \subset W(M)$ to be a derivation of the algebra structure on $W(M)$.

\begin{defin}\label{spinbundef}
We call a complex vector bundle (of necessairly infinite rank) $S\to M$ a bundle of symplectic spinors provided that for each $x\in M$ we have an action
\[
W(M)_x \times S_x \to S_x, ~~ (u,\psi) \mapsto \sigma(u) \psi,
\]
that is isomorphic to the canonical quantization representation (or possibly a dense subrepresentation) of the Weyl algebra associated to the symplectic vector space $T_x M$ and which varies smoothly over $x$ (i.e. if $v$ is a smooth vector field and $\psi$ a smooth section of $S$ then $\sigma(v)\psi$ is also smooth).  We also require $S$ to be equipped with a hermitian metric $\langle \cdot, \cdot \rangle$ and hermitian connection $\nabla^S$,  with the following compatibility relations:
\begin{gather*}
\langle \sigma(v) \psi, \phi \rangle = -\langle \psi, \sigma(v) \phi \rangle, ~~ v \in TM \subset W(M), ~~\psi,\phi \in S \\
\nabla_w^S (\sigma(v) \psi) = \sigma(\nabla_w v) \psi  + \sigma(v) \nabla^S_w \psi.
\end{gather*}
\end{defin}
We will often omit the superscript on $S$ from $\nabla^S$, as it is always clear from context which connection is being used. \\

There are two main constructions of symplectic spinors that we will employ.  The first uses a metaplectic structure, which is a reduction of the structure group of $TM$ from $Sp(n)$ to $Mp(n)$.  As in the case of spin structures, these exists if and only if the second Stiefel-Whitney class vanishes (if and only if $c_1(M)$ is even) and, if they exist, are classified by $H^1(M;\Z/2)$ \cite{kostant}.  If $M$ has a metaplectic structure then we can use the complex structure to further reduce the frame bundle to a principal $\tilde U(n)$ bundle.  

\begin{defin}
Suppose $M$ has a metaplectic structure $P_{\tilde U(n)} \to M$.  Then the bundle of metaplectic spinors is defined to be
\[
S_m = P_{\tilde U(n)} \times_m L^2(\R^n).
\]
The $L^2$ inner product induces a hermitian metric on $S_m$ and the connection on $TM$ determines a unique connection on $P_{\tilde U(n)}$, which further induces a connection on the associated bundle $S_m$.  With these structures, $S_m$ satisfies the conditions of definition \ref{spinbundef} \cite{haber}.
\end{defin} 
\begin{rk}
Since we will only be concerned with smooth sections, we could also define $S_m$ as $P_{\tilde U(n)} \times_m \mathcal S (\R^n)$, where $\mathcal S(\R^n)$ is the Schwartz space.  Indeed, any smooth section of $P_{\tilde U(n)} \times_m L^2(\R^n)$ is actually a section of the subbundle $P_{\tilde U(n)} \times_m \mathcal S(\R^n)$ \cite{haber}.
\end{rk}

The other way to construct symplectic spinors is by using the polarization of $TM \otimes \C$ determined by the complex structure (i.e. using the Fock representation): 
\begin{defprop}
The bundle of Fock spinors is $S_F := S^* T^{1,0} M$.  This is a spinor bundle with action and hermitian metric given in section \ref{focksection}, and connection induced by the Levi-Civita connection (i.e. making the connection a derivation with respect to the symmetric product).
\end{defprop}
\begin{proof}
What needs to be proved is that the connection is compatible with the hermitian metric and the Weyl algebra action.  Let $\{Z_j, \bar Z_j\}$ be a local unitary frame of $TM\otimes \C$.  Since the functions $\langle Z_{i_1} \cdots Z_{i_l}, Z_{j_1} \cdots Z_{j_l}\rangle$ are constant, to check compatibility with the metric one must show that $\langle \nabla (Z_1^{\beta_1} \cdots Z_n^{\beta_n}), Z_1^{\alpha_1} \cdots Z_n^{\alpha_n}\rangle = - \langle Z_1^{\beta_1} \cdots Z_n^{\beta_n}, \nabla (Z_1^{\alpha_1} \cdots Z_n^{\alpha_n})\rangle$.  Letting $\nabla Z_i = \Gamma_i^j Z_j$ for $\Gamma = (\Gamma_i^j) \in \Om^1(M; \mathfrak{u(n)})$, the compatibility condition is a straightforward computation using (\ref{fockinner}) and the fact that $\Gamma_i^j = - \overline{\Gamma_j^i}$. \\

Now for compatibility with the Weyl algebra action, by definition of the action and connection on $S_F$, it is immediate that
\[
\nabla (\sigma(Z_j) \psi) = \sigma(\nabla Z_j) \psi + \sigma(Z_j) \nabla \psi, ~~ \psi \in \Gamma(S^* T^{1,0} M).
\]
To show that $[\nabla, \sigma(\bar Z_j)] = \sigma(\nabla \bar Z_j)$, it is sufficient to check that these agree on local sections of the form $Z_{i_1} \cdots Z_{i_l}$.  For these sections $\nabla = -\nabla^*$ since the functions $\langle Z_{i_1} \cdots Z_{i_l}, Z_{j_1} \cdots Z_{j_l}\rangle$ are constant and $\nabla$ is compatible with the hermitian metric.  As real vectors act anti-self-adjointly, we have $\sigma(V)^* = -\sigma(\bar V)$, $V \in T^{1,0} M$.  Thus
\begin{gather*}
[\nabla, \sigma(\bar Z_j)] = [\nabla^*, \sigma(Z_j)^*] = [\nabla, \sigma(Z_j)]^* = (\sigma(\nabla Z_j))^* \\
= -\sigma(\overline{\nabla Z_j}) = \sigma(\bar Z_j).
\end{gather*}
\end{proof}

The analogy to orthogonal spin geometry is that the Fock spinors on $M$ are the counterparts to the complex spinors on $M$ induced by the complex structure (the construction we gave in section \ref{orthspin}), while the metaplectic spinors are the counterparts to spinors on $M$ induced by a spin structure.  \\

Now let $S$ be any symplectic spinor bundle.  Since $H_0$ lies in the center of $\mathfrak u(n)$, it gives rise to a global operator $H \in \End(S_m)$ given by
\[
H \cdot [p, \psi] = [p, \sigma(H_0)\psi].
\]
We therefore have the decomposition $S = \bigoplus_{l=0}^\infty E_l$ where $E_l$ is a rank $n + l -1 \choose l$ bundle on which $H$ acts as $-(l+n/2)$.  In the case of $S = S_m$, 
\[
E_l = P_{\tilde U(n)} \times_m E_l^0.
\]
Since $\nabla J = 0$ it follows that $\nabla H = 0$ \cite{haber}.  Thus this decomposition of $S$ is preserved by the connection.\\

Given a line bundle $L$ with hermitian metric, we can twist any spinor bundle $S$ to get the spinor bundle $S \otimes L$ (the Weyl algebra acts trivially on the $L$ factor).  By (\ref{fockmeta}), if $M$ is metaplectic then we can get the Fock spinors from the metaplectic spinors in this way by tensoring with a square root of the anti-canonical line bundle (which exists if and only if $w_2(M) = 0$).

\subsubsection{Symplectic Dirac operators}
Let $(S, \nabla^S)$ be any bundle of symplectic spinors.  

\begin{defin}
The symplectic Dirac operators, $D$ and $\tilde D$ are
\begin{gather*}
D: \Gamma(S) \stackrel{\nabla}{\to} \Gamma(S\otimes T^*M) \stackrel{\om}{\to} \Gamma(S \otimes TM) \to \Gamma(S) \\
\tilde D: \Gamma(S) \stackrel{\nabla}{\to} \Gamma(S\otimes T^*M) \stackrel{g}{\to} \Gamma(S \otimes TM) \to \Gamma(S),
\end{gather*}
where the last map, in each case, is the Weyl algebra action.  
\end{defin}
That is, $\tilde D$ is defined in complete analogy to how the Dirac operator is defined in orthogonal spin geometry, whereas in $D$ we use the symplectic form to lower indices instead of the metric.  If $\{a_j, b_j = Ja_j\}$ is a local unitary frame then
\begin{equation}\label{locform}
D = \sum_{j=1}^n (a_j \cdot \nabla^S_{b_j} - b_j \cdot \nabla^S_{a_j}), ~~ \tilde D = \sum_{j=1}^n (a_j \cdot\nabla^S_{a_j} + b_j \cdot \nabla^S_{b_j}).
\end{equation}
For $\xi \in T^* M$, let $\xi^\#$ denote the corresponding vector under the isomorphism $T^*M \to TM$ induced by the symplectic form $\om$.  Then the symbols of the operators $D$ and $\tilde D$ are \cite{haber}
\begin{equation}\label{syms}
\begin{aligned}
\operatorname{sym} D : T^*M \to \End(S), ~~ \xi \mapsto \sigma(\xi^\#), \\
\operatorname{sym} \tilde D: T^*M \to \End(S), ~~ \xi \mapsto \sigma(J\xi^\#),
\end{aligned}
\end{equation}
where, as before, $\sigma$ denotes the Weyl action of a tangent vector. \\

We have the following important propositions, which are proved in \cite{haber}
\begin{prop} \label{selfad}
The operators $D$ and $\tilde D$ are (formally) self-adjoint.
\end{prop}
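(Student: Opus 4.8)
The plan is to adapt the classical proof of self-adjointness for a Dirac operator: I will show that, fibrewise, the defect $\langle D\psi,\phi\rangle-\langle\psi,D\phi\rangle$ (and likewise for $\tilde D$) equals the divergence of a globally defined complex vector field on $M$, and then integrate over the closed manifold $M$. By the smoothness clause in Definition \ref{spinbundef}, $\sigma(v)\psi$ is smooth whenever $v$ and $\psi$ are, so all the fibrewise pairings below are smooth functions and the integration by parts is legitimate even though the $\sigma(v)$ are unbounded. Fix a local unitary frame $\{a_j,b_j=Ja_j\}$ and write $\{e_1,\dots,e_{2n}\}=\{a_1,b_1,\dots,a_n,b_n\}$. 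The only inputs are: anti-self-adjointness $\langle\sigma(v)\psi,\phi\rangle=-\langle\psi,\sigma(v)\phi\rangle$ for real $v$; the metric- and Weyl-compatibility of $\nabla$ from Definition \ref{spinbundef}; and, for $D$, the fact that $\nabla J=0$ since $M$ is K\"ahler.

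I would dispatch $\tilde D=\sum_k\sigma(e_k)\nabla_{e_k}$ first, as it is the verbatim analogue of the orthogonal case. Pairing against $\phi$, moving $\sigma(e_k)$ across by anti-self-adjointness, using metric-compatibility to differentiate $\langle\psi,\sigma(e_k)\phi\rangle$, and Weyl-compatibility to expand $\nabla_{e_k}(\sigma(e_k)\phi)=\sigma(\nabla_{e_k}e_k)\phi+\sigma(e_k)\nabla_{e_k}\phi$, one collects $\langle\psi,\tilde D\phi\rangle$ plus the remainder
\[
\langle\tilde D\psi,\phi\rangle-\langle\psi,\tilde D\phi\rangle=-\sum_k e_k\langle\psi,\sigma(e_k)\phi\rangle+\sum_k\langle\psi,\sigma(\nabla_{e_k}e_k)\phi\rangle=-\operatorname{tr}_g(\nabla\beta),
\]
where $\beta$ is the complex $1$-form $\beta(v):=\langle\psi,\sigma(v)\phi\rangle$. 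Since $\nabla g=0$, $\operatorname{tr}_g(\nabla\beta)$ is the divergence of the vector field $g$-dual to $\beta$, so it integrates to $0$ over $M$ and $\tilde D$ is formally self-adjoint.

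For $D=\sum_j(\sigma(a_j)\nabla_{b_j}-\sigma(b_j)\nabla_{a_j})$ the identical manipulation produces $\langle\psi,D\phi\rangle$ together with two correction sums,
\[
\langle D\psi,\phi\rangle-\langle\psi,D\phi\rangle=\sum_j\big(a_j\langle\psi,\sigma(b_j)\phi\rangle-b_j\langle\psi,\sigma(a_j)\phi\rangle\big)+\sum_j\langle\psi,\sigma(\nabla_{b_j}a_j-\nabla_{a_j}b_j)\phi\rangle.
\]
Because the frame is unitary ($b_j=Ja_j$, $a_j=-Jb_j$) and $J$ is parallel, the first sum collapses to $\sum_k e_k\langle\psi,\sigma(Je_k)\phi\rangle$, while $\nabla_{b_j}a_j-\nabla_{a_j}b_j=-J(\nabla_{a_j}a_j+\nabla_{b_j}b_j)$, so the second sum equals $-\sum_k\langle\psi,\sigma(J\nabla_{e_k}e_k)\phi\rangle$. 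Together these two expressions are precisely $\operatorname{tr}_g(\nabla\gamma)$ for the globally defined $1$-form $\gamma(v):=\langle\psi,\sigma(Jv)\phi\rangle$, hence a divergence, and they again integrate to $0$. Therefore $D$ is formally self-adjoint as well.

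The computation is essentially bookkeeping; the only non-mechanical point, and the one place where the argument for $D$ departs from the classical template, is the connection correction term for $D$. One must check that the extra $J$ that appears because $D$ raises indices with $\om$ rather than $g$ can be pulled uniformly through the Levi--Civita derivative---this is exactly where $\nabla J=0$ (equivalently $\nabla\om=0$) enters---so that the correction reassembles into the divergence of the honest $1$-form $\gamma$ instead of a frame-dependent remainder that does not integrate away. Note that nothing here uses the multiplicative structure of the Weyl algebra (only the relation of $\sigma$ to the hermitian metric and connection), nor the torsion-free property beyond what is built into the Levi--Civita connection.
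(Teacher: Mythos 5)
Your proof is correct. The paper itself defers this proposition to \cite{haber} without reproducing an argument, but your Green's-formula/divergence computation is precisely the standard one used there, and you correctly isolate the role of the K\"ahler condition $\nabla J = 0$ to the $D$-case (so that the connection correction reassembles into the covariant trace of the one-form $\gamma(v)=\langle\psi,\sigma(Jv)\phi\rangle$), matching the paper's own remark that only $D$, not $\tilde D$, requires it.
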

The self-adjointness of $D$ uses the K\"ahler condition $\nabla J = 0$.  \\

Unlike in the orthogonal case, the symplectic Dirac operators are not elliptic; in the orthogonal case ellipticity follows from the fact that a non-zero tangent vector $v$ is invertible in the Clifford algebra, which is not true in the Weyl algebra.  However, there is a naturally associated second order elliptic operator:
\begin{prop}
The operator $P := i[\tilde D, D]$ is elliptic.
\end{prop}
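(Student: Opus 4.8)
The plan is to compute the principal symbol of $P = i[\tilde D, D]$ and show it is invertible on each fiber of $S$ for every nonzero covector. Since $D$ and $\tilde D$ are first-order operators, their commutator is \emph{a priori} second order, and the degree-two part of its symbol is governed by the product of their principal symbols. Concretely, using \eqref{syms}, for $\xi \in T^*_xM$ nonzero we expect
\[
\operatorname{sym}_2 P(\xi) = -\,[\,\sigma(J\xi^\#),\, \sigma(\xi^\#)\,] = -\big(\sigma(J\xi^\#)\sigma(\xi^\#) - \sigma(\xi^\#)\sigma(J\xi^\#)\big).
\]
By the commutation relation \eqref{com}, this equals $i\,\omega(J\xi^\#, \xi^\#)\cdot \mathrm{id}_S = i\, g(\xi^\#,\xi^\#)\cdot \mathrm{id}_S$, using $\omega(Jv,w) = g(v,w)$ and $g(\xi^\#,\xi^\#) = |\xi|^2_\omega > 0$. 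Hence $\operatorname{sym}_2 P(\xi)$ is a nonzero scalar multiple of the identity, so it is invertible, which is exactly ellipticity.

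The steps, in order, are: (1) recall that for first-order operators $A, B$ with principal symbols $\sigma_A(\xi), \sigma_B(\xi)$, the commutator $[A,B]$ has degree-two symbol equal to $[\sigma_A(\xi),\sigma_B(\xi)]$ (with the lower-order terms being genuinely first order and irrelevant for ellipticity); (2) substitute the symbols from \eqref{syms}; (3) apply \eqref{com} to turn the commutator of Weyl actions into the scalar $-i\omega(J\xi^\#,\xi^\#)$; (4) use the compatibility identities $\omega(Jv,w)=g(v,w)$ and the definition of $\#$ to identify this scalar with $|\xi|^2$ up to a constant; (5) conclude that $\operatorname{sym}_2 P(\xi) = c\,|\xi|^2\,\mathrm{id}_S$ with $c \neq 0$, hence invertible for $\xi \neq 0$.

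The one genuine subtlety — the main obstacle — is that $\sigma$ is not an algebra homomorphism (as the excerpt stresses), so one must be careful that the naive symbol calculus still applies. The point is that the failure of $\sigma$ to be a homomorphism is captured precisely by \eqref{com}, which says $[\sigma(v),\sigma(w)]$ is a \emph{scalar}, not an operator of any positive order; so when we form $[\tilde D, D]$, the term where the two Weyl actions are multiplied contributes $\sigma(J\xi^\#)\sigma(\xi^\#) - \sigma(\xi^\#)\sigma(J\xi^\#) = -i\omega(J\xi^\#,\xi^\#)$ to the symbol, and this is what survives at top order. All the terms involving derivatives of the frame or Christoffel symbols, and the terms where $\nabla$ hits the coefficient covector, drop to first order or below. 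Thus the computation reduces to the fiberwise identity above, and since $g$ is positive-definite the symbol is elliptic. (One could alternatively invoke the local expressions \eqref{locform} directly and track the second-order part, but the symbol-level argument is cleanest.)
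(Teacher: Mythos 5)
Your proof takes the same route as the paper: use the symbols in (\ref{syms}), apply the Weyl relation (\ref{com}) to reduce $[\sigma(J\xi^\#),\sigma(\xi^\#)]$ to a scalar, and conclude that the degree-two symbol of $P$ is a nonzero multiple of the identity. The only slip is a constant factor---in the paper's symbol convention $\operatorname{sym}_2 P(\xi) = i[\sigma(J\xi^\#),\sigma(\xi^\#)] = \om(J\xi^\#,\xi^\#)$, not $-[\sigma(J\xi^\#),\sigma(\xi^\#)]$, and since $P$ is self-adjoint its top symbol should indeed come out real rather than imaginary---but this does not affect the ellipticity conclusion, and your observation that (\ref{com}) being a \emph{scalar} is precisely what makes the naive symbol calculus legitimate despite $\sigma$ failing to be an algebra homomorphism is a nice supporting point the paper leaves implicit.
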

\begin{proof}
From (\ref{syms}), the symbol of $P$ at $\xi$ is given by 
\[
i[\sigma(J\xi^\#), \sigma(\xi^\#)] = \om(J\xi^\#, \xi^\#) = g(\xi^\#, \xi^\#),
\]
with the first equality coming from (\ref{com}).
\end{proof}

We will now define the symplectic Dolbeault operators.  First we have the following fact proved in \cite{haber}
\begin{prop}
The symplectic Dirac operators satisfy the commutation relations
\[
[H, D] = i\tilde D, ~~ [H, D] = -iD.
\]
\end{prop}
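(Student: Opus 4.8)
The plan is to reduce both identities to a single pointwise commutator relation for the Weyl action, namely
\[
[H,\sigma(v)] = i\,\sigma(Jv), \qquad v \in TM,
\]
and then to substitute this into the local formulas (\ref{locform}) for $D$ and $\tilde D$.

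First I would prove the displayed relation. Fix a point of $M$ and a local unitary frame $\{a_j, b_j = Ja_j\}$. Unwinding the definition $H = \sigma(J_0)$ together with $\sigma(a_j) = ix_j$ and $\sigma(b_j) = \partial/\partial x_j$ shows that, in this frame, $H = \frac12\sum_j\big(\sigma(a_j)^2 + \sigma(b_j)^2\big)$. Applying the Leibniz rule $[AB,C] = A[B,C]+[A,C]B$ together with (\ref{com}) — noting that $[\sigma(a_j),\sigma(v)] = -i\om(a_j,v)$ is a scalar — gives $[\sigma(a_j)^2,\sigma(v)] = -2i\,\om(a_j,v)\,\sigma(a_j)$, and likewise with $b_j$. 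Summing and using linearity of $\sigma$ on $TM$,
\[
[H,\sigma(v)] = -i\,\sigma\!\Big(\sum_j\big(\om(a_j,v)a_j + \om(b_j,v)b_j\big)\Big).
\]
Because the frame is unitary, a direct check gives $\sum_j\big(\om(a_j,v)a_j + \om(b_j,v)b_j\big) = -Jv$, which is exactly the claimed relation. (Equivalently, this is the infinitesimal $Mp(n)$-equivariance of $\sigma$ applied to the central generator $J_0$ of $\mathfrak u(n)$, and one could simply invoke that.)

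Next I would use the K\"ahler hypothesis. Since $\nabla J = 0$, the parallel endomorphism $H$ satisfies $\nabla H = 0$, hence $H$ commutes with $\nabla_w$ as an operator on $\Gamma(S)$ for every vector field $w$. Therefore in the unitary frame $[H,\sigma(a_j)\nabla_{b_j}] = [H,\sigma(a_j)]\nabla_{b_j}$, and similarly for every term of (\ref{locform}). Plugging in $[H,\sigma(a_j)] = i\sigma(Ja_j) = i\sigma(b_j)$ and $[H,\sigma(b_j)] = i\sigma(Jb_j) = -i\sigma(a_j)$, the formula $D = \sum_j(\sigma(a_j)\nabla_{b_j} - \sigma(b_j)\nabla_{a_j})$ yields
\[
[H,D] = \sum_j\big(i\sigma(b_j)\nabla_{b_j} + i\sigma(a_j)\nabla_{a_j}\big) = i\tilde D,
\]
and the identical manipulation on $\tilde D = \sum_j(\sigma(a_j)\nabla_{a_j} + \sigma(b_j)\nabla_{b_j})$ gives
\[
[H,\tilde D] = \sum_j\big(i\sigma(b_j)\nabla_{a_j} - i\sigma(a_j)\nabla_{b_j}\big) = -iD.
\]
Both sides of each identity are globally defined operators, so verifying them in an arbitrary unitary frame near each point suffices.

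The one point that requires care is the very first step: $\sigma$ is deliberately \emph{not} an algebra homomorphism, so the Leibniz computation of $[H,\sigma(v)]$ needs justification. It is legitimate because $H$ is assembled from the honest operator squares $\sigma(a_j)^2,\sigma(b_j)^2$ (equivalently, $\sigma$ does send $a_j^k$ to $\sigma(a_j)^k$ on the monomials at hand), so only the genuine operator commutation relation (\ref{com}) is used and no homomorphism property of $\sigma$ is invoked. Everything after that is bookkeeping with the unitary frame, and no further analytic input is needed.
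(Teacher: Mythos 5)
Your proof is correct. Note that the paper does not actually supply a proof of this proposition; it simply cites Habermann \cite{haber}, so your argument fills in what the paper leaves as a reference. The route you take is the natural one: establish the pointwise commutator $[H,\sigma(v)]=i\sigma(Jv)$ (equivalently, the infinitesimal metaplectic equivariance of $\sigma$ applied to the central element $J_0\in\mathfrak u(n)$), then use the K\"ahler hypothesis $\nabla J=0\Rightarrow\nabla H=0$ to commute $H$ past the covariant derivatives in the local formulas (\ref{locform}). Your precaution about $\sigma$ not being an algebra homomorphism is well-placed and correctly resolved by checking $\sigma(a_j^2)=\sigma(a_j)^2$ on the specific monomials appearing in $H_0$. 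One small remark unrelated to your proof: the second displayed identity in the proposition as printed in the paper reads $[H,D]=-iD$; this is a typo for $[H,\tilde D]=-iD$, which is what you (correctly) prove.
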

This motivates
\begin{defin}
The symplectic Dolbeault operators are
\[
\D := D + i \tilde D, ~~~ \bar\D = D - i \tilde D.
\]
\end{defin}
This definition should be compared to (\ref{orthdol}). \\

The following properties of these operators are easily proved using the previous propositions
\begin{prop}\label{symdol}
The symplectic Dolbeault operators satisfy the following
\begin{enumerate}
\item $[H, \D] = \D, ~~ [H, \bar\D] = -\bar \D$, i.e. 
\begin{gather*}
\D: \Gamma(E_l) \to \Gamma(E_{l-1}) \\
\bar\D : \Gamma(E_l) \to \Gamma(E_{l+1}).
\end{gather*}
\item $[\D, \bar\D] = 2P$.
\item $\D^* = \bar\D$.
\item $\D$ and $\bar \D$ have symbols
\begin{gather*}
T^*M \to \End(S), ~~ \xi \mapsto \sigma(\xi^\# + i J \xi^\#), \\
T^*M \to \End(S), ~~ \xi \mapsto \sigma(\xi^\# -i J \xi^\#),
\end{gather*}
respectively.
\end{enumerate}
\end{prop}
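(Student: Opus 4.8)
The plan is to derive each of the four items directly from the propositions already established, treating the symplectic Dolbeault operators purely as the linear combinations $\D = D + i\tilde D$ and $\bar\D = D - i\tilde D$. The only genuine input is the pair of commutation relations $[H,D] = i\tilde D$ and $[H,\tilde D] = -iD$ (the excerpt's displayed ``$[H,D] = -iD$'' is evidently a typo for $[H,\tilde D] = -iD$), together with the self-adjointness of $D$ and $\tilde D$ (Proposition \ref{selfad}), the definition $P = i[\tilde D, D]$, and the symbol formulas (\ref{syms}). None of these steps should be more than a line of algebra, so the ``main obstacle'' is really just bookkeeping with signs and factors of $i$; I will be careful that the convention $[H,D]=i\tilde D$, $[H,\tilde D]=-iD$ is the one actually used.

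For item (1): compute $[H,\D] = [H,D] + i[H,\tilde D] = i\tilde D + i(-iD) = D + i\tilde D = \D$, and similarly $[H,\bar\D] = [H,D] - i[H,\tilde D] = i\tilde D - i(-iD) = -D + i\tilde D = -(D - i\tilde D) = -\bar\D$. The statement about $\Gamma(E_l)$ then follows because $E_l$ is the eigenbundle of $H$ with eigenvalue $-(l+n/2)$: if $\psi \in \Gamma(E_l)$ then $H(\D\psi) = \D(H\psi) + [H,\D]\psi = -(l+n/2)\D\psi + \D\psi = -((l-1)+n/2)\D\psi$, so $\D\psi \in \Gamma(E_{l-1})$, and dually $\bar\D\psi \in \Gamma(E_{l+1})$. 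For item (2): expand $[\D,\bar\D] = [D + i\tilde D, D - i\tilde D] = -i[D,\tilde D] + i[\tilde D, D] = 2i[\tilde D, D] = 2P$, using $[D,D] = [\tilde D,\tilde D] = 0$ and $[D,\tilde D] = -[\tilde D, D]$.

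For item (3): since $D^* = D$ and $\tilde D^* = \tilde D$ by Proposition \ref{selfad}, and adjoint is conjugate-linear, $\D^* = (D + i\tilde D)^* = D^* - i\tilde D^* = D - i\tilde D = \bar\D$. For item (4): by (\ref{syms}) the symbol of $D$ at $\xi$ is $\sigma(\xi^\#)$ and of $\tilde D$ is $\sigma(J\xi^\#)$, and the symbol is linear in the operator, so $\operatorname{sym}\D(\xi) = \sigma(\xi^\#) + i\,\sigma(J\xi^\#) = \sigma(\xi^\# + iJ\xi^\#)$ and $\operatorname{sym}\bar\D(\xi) = \sigma(\xi^\# - iJ\xi^\#)$, using $\C$-linearity of $\sigma$. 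I would present all of this as one short proof, remarking only that everything is immediate from the cited propositions and the bilinearity/conjugate-linearity of the commutator bracket, adjoint, and principal symbol.
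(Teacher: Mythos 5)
Your proof is correct and takes exactly the route the paper intends: the paper states only that these identities are ``easily proved using the previous propositions,'' and your one-line verifications of each item via bilinearity of the bracket, conjugate-linearity of the adjoint, and linearity of the principal symbol are precisely what is meant. You also correctly identified that the displayed commutation relation ``$[H,D]=-iD$'' in the paper is a typo for $[H,\tilde D]=-iD$; the computation $[H,\D]=\D$, $[H,\bar\D]=-\bar\D$ confirms this is the intended convention.
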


As $D$ and $\tilde D$ are not elliptic their kernels may not be finite dimensional.  However, we see the following
\begin{cor}
The restrictions of $D$ and $\tilde D$ to $\Gamma(E_l)$ have finite dimensional kernel.
\end{cor}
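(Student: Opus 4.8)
The plan is to deduce the finite-dimensionality of $\ker(D\vert_{\Gamma(E_l)})$ and $\ker(\tilde D\vert_{\Gamma(E_l)})$ from ellipticity of the second-order operator $P$ together with the grading behavior recorded in Propositions above. First I would observe that, by Proposition \ref{symdol}(1), $\D$ maps $\Gamma(E_l)$ to $\Gamma(E_{l-1})$ and $\bar\D$ maps $\Gamma(E_l)$ to $\Gamma(E_{l+1})$, so the operator $\frac{1}{2}[\D,\bar\D] = P$ preserves each $\Gamma(E_l)$; write $P_l$ for this restriction. Since $P$ is elliptic (it has scalar principal symbol $g(\xi^\#,\xi^\#)$) and $E_l$ is a finite-rank bundle over the compact manifold $M$, the operator $P_l$ is an elliptic operator on a finite-rank bundle over a compact manifold and hence has finite-dimensional kernel.

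Next I would relate $\ker P_l$ to the kernels of the Dirac operators. The key algebraic point is $P = i[\tilde D, D]$ combined with $\D = D + i\tilde D$, $\bar\D = D - i\tilde D$: a short computation gives $2P_l = \bar\D_{l-1}\D_l - \D_{l+1}\bar\D_l$ on $\Gamma(E_l)$, where I index the operators by their source. Now if $\psi \in \Gamma(E_l)$ satisfies $D\psi = 0$ and $\tilde D\psi = 0$, then certainly $\D\psi = 0$ and $\bar\D\psi = 0$, hence $P_l\psi = 0$. More usefully, I claim $\ker(D\vert_{\Gamma(E_l)}) \cap \ker(\tilde D\vert_{\Gamma(E_l)}) = \ker\D_l \cap \ker\bar\D_l$, which is immediate since $D = \frac{1}{2}(\D + \bar\D)$ and $\tilde D = \frac{1}{2i}(\D - \bar\D)$; and this intersection sits inside $\ker P_l$, hence is finite-dimensional. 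To handle $\ker D\vert_{\Gamma(E_l)}$ itself (not just the intersection), I would use self-adjointness from Proposition \ref{selfad}: if $D\psi = 0$ with $\psi \in \Gamma(E_l)$, then since $[H,D] = i\tilde D$ preserves nothing naively, I instead argue via $\D^*=\bar\D$ (Proposition \ref{symdol}(3)). Observe $\langle P_l\psi,\psi\rangle = \frac{1}{2}\langle(\bar\D\D - \D\bar\D)\psi,\psi\rangle = \frac{1}{2}(\|\D\psi\|^2 - \|\bar\D\psi\|^2)$, so $D\psi = \tilde D\psi = 0$ forces $P_l\psi = 0$, but the cleaner route is: $D\psi=0$ and $\tilde D\psi = 0$ are each individually controlled because $\D\psi$ lands in $E_{l-1}$ and $\bar\D\psi$ in $E_{l+1}$, so $D\psi=0 \iff \D\psi = -\bar\D\psi$ with the left side in $E_{l-1}$ and right in $E_{l+1}$; as $E_{l-1}\cap E_{l+1}=0$ inside $S$, this gives $\D\psi=0=\bar\D\psi$, hence $D\psi=0$ already implies $\psi\in\ker\D_l\cap\ker\bar\D_l\subseteq\ker P_l$. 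The same argument applies verbatim to $\tilde D$. Therefore $\ker(D\vert_{\Gamma(E_l)})$ and $\ker(\tilde D\vert_{\Gamma(E_l)})$ both coincide with $\ker\D_l\cap\ker\bar\D_l$ and are finite-dimensional.

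The main obstacle is the passage from the abstract statement ``$P_l$ is elliptic on a compact manifold'' to finite-dimensionality of the Dirac kernels, i.e. making sure the decomposition $S = \bigoplus_l E_l$ is compatible enough with all the operators so that restricting to $\Gamma(E_l)$ is legitimate. This is supplied by Proposition \ref{symdol}(1) (the grading shift) together with the earlier observation that $\nabla H = 0$, which guarantees that $E_l$ is a genuine smooth finite-rank subbundle preserved by the connection, so that $P_l$ is an honest elliptic operator between sections of a finite-rank bundle; standard elliptic theory on compact manifolds then applies. The only other point requiring care is that one must use the grading to rule out cancellation between $\D\psi$ and $\bar\D\psi$ — but as noted this is automatic because they live in different summands $E_{l-1}$ and $E_{l+1}$ of the direct sum, so $D\psi=0$ genuinely forces both $\D\psi=0$ and $\bar\D\psi=0$.
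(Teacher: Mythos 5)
Your argument is correct and follows essentially the same route as the paper's proof: both identify $\ker D\vert_{E_l}$ with $\ker\D\vert_{E_l}\cap\ker\bar\D\vert_{E_l}$ by noting that $\D\psi\in\Gamma(E_{l-1})$ and $\bar\D\psi\in\Gamma(E_{l+1})$ lie in disjoint summands, then observe this intersection sits inside $\ker P\vert_{E_l}$ and invoke ellipticity of $P$. One small slip in an unused aside: since $2P=[\D,\bar\D]=\D\bar\D-\bar\D\D$, the restriction is $2P_l=\D_{l+1}\bar\D_l-\bar\D_{l-1}\D_l$ (not the reverse), and correspondingly $\langle P_l\psi,\psi\rangle=\tfrac{1}{2}\bigl(\|\bar\D\psi\|^2-\|\D\psi\|^2\bigr)$ has the opposite sign from what you wrote — neither error affects your main argument, which never relies on these formulas.
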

\begin{proof}
We have $D = \frac{1}{2}(\D + \bar \D)$.  Since $\tilde D$ and $\tilde \bar D$ maps $\Gamma(E_l)$ to the disjoint subspaces $\Gamma(E_{l-1})$ and $\Gamma(E_{l+1})$, respectively, it follows that
\[
\ker D\vert_{E_l} = \ker \D \vert_{E_l} \cap \ker \bar\D \vert_{E_l}.
\]
But
\[
(\ker \D \vert_{E_l} \cap \ker \bar\D \vert_{E_l}) \subset \ker [\D, \bar\D] \vert_{E_l} = \ker P.
\]
Since $P$ is elliptic, $\ker P$, and thus the subspace $\ker D\vert_{E_l}$, is finite dimensional.
\end{proof}
From (\ref{locform}) we see that in a local unitary frame $\{a_j, b_j = Ja_j\}$ the Dolbeault operators take the forms
\[
\D = 4i \sum_{j=1}^n \sigma(\bar{Z_j}) \nabla^S_{Z_j}, ~~ \bar\D = -4i \sum_{j=1}^n \sigma(Z_j) \nabla^S_{\bar Z_j},
\]
where $Z_j = \frac{1}{2}(a_j - i b_j) \in T^{1,0} M$ and $\bar Z_j = \frac{1}{2}(a_j + i b_j) \in T^{0,1} M$.  \\

From these formulas we expect holomorphicity to play a role.  While the Fock spinors, being $S^* T^{1,0} M$, have a natural holomorphic structure for which the spinor connection coincides with the Chern connection (i.e. the unique connection compatible with the hermitian structure and with $(0,1)$ part equal to $\bar\partial$, the holomorphic structure), the holomorphicity of the metaplectic spinors is not so immediate.  This is taken care of by the following
\begin{prop}\label{chernconn}
The bundle $S_m$ has a natural holomorphic structure for which the spinor connection is the Chern connection.
\end{prop}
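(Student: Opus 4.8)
The plan is to equip $S_m$ with the holomorphic structure whose $\bar\partial$-operator is the $(0,1)$-part of the spinor connection, $\bar\partial := (\nabla^S)^{0,1}$, and then to observe that nothing further needs checking for the ``Chern connection'' assertion: $\nabla^S$ is Hermitian by the construction of $S_m$, and its $(0,1)$-part is $\bar\partial$ by definition, so by the uniqueness clause in the definition of the Chern connection, $\nabla^S$ \emph{is} that connection as soon as $\bar\partial$ is a genuine holomorphic structure. Hence the entire content of the proposition is the integrability $\bar\partial^2 = 0$, equivalently the vanishing of the $(0,2)$-part of the curvature $R^S$ of $\nabla^S$.

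To get at $R^S$, recall that $\nabla^S$ is the connection on $S_m = P_{\tilde U(n)} \times_m L^2(\R^n)$ associated, via $m_*$, to the connection on $P_{\tilde U(n)}$ determined by the Levi-Civita connection, i.e. the $\tilde U(n)$-lift of the $U(n)$-reduction (by $J$) of the Levi-Civita connection on $TM$. Therefore $R^S = m_* \circ R$, where $R \in \Om^2(M;\mathfrak u(n))$ is the curvature of that $U(n)$-connection, namely the curvature of the Levi-Civita connection on $T^{1,0}M$ read off in a unitary frame. Since $M$ is K\"ahler, the Levi-Civita connection on $T^{1,0}M$ coincides with the Chern connection of the holomorphic tangent bundle, whose curvature is of pure type $(1,1)$; this is exactly the point at which the K\"ahler hypothesis enters. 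As $m_*$ is a linear map $\mathfrak u(n) \to \End(L^2(\R^n))$ post-composed with the form $R$, it does not touch the form part, so $R^S = m_* \circ R$ is again of type $(1,1)$, whence $(R^S)^{0,2} = 0$ and $\bar\partial^2 = 0$, as needed.

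Two points call for care in the write-up. First, $S_m$ has infinite rank, so rather than invoke a Koszul--Malgrange / Newlander--Nirenberg statement directly in infinite dimensions I would argue componentwise: the splitting $S_m = \bigoplus_l E_l$ into the finite-rank $H$-eigenbundles is $\nabla^S$-parallel (since $\nabla H = 0$), the curvature computation above applies verbatim to each finite-rank $E_l$, giving each a holomorphic structure with Chern connection $\nabla^S\vert_{E_l}$, and the holomorphic structure on $S_m$ is then the (completed) direct sum. Second, to justify calling this structure ``natural'' I would record that it is compatible with the Weyl-algebra action and with the Fock picture: the raising operators $\sigma(Z_j)$ are, up to constants, holomorphic bundle maps $E_l \to E_{l+1}$ — this follows from $[\nabla^S, \sigma(Z_j)] = \sigma(\nabla Z_j)$ together with holomorphicity of $T^{1,0}M$ — and under the isomorphism $E_l \simeq S^l T^{1,0}M \otimes E_0$ coming from (\ref{fockmeta}), where $E_0$ is the ground-state line bundle (a square root of $K^{\pm 1}$ whose curvature is half that of $K^{\pm 1}$ and hence again $(1,1)$), this structure is the tensor product of the obvious holomorphic structure on $S^l T^{1,0}M$ with that on $E_0$; in particular it agrees with the one on the Fock spinors.

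The main obstacle is bookkeeping rather than a genuine difficulty: one must check that the identifications $W^{(2)}(V) \simeq \mathfrak u(n) \subset \mathfrak{sp}(V)$ and the passage to the double cover $\tilde U(n)$ do not disturb the type decomposition of the curvature, and one must handle the infinite rank cleanly by reducing to the finite-rank $E_l$. Once $R^S$ is known to be of type $(1,1)$ the proposition is immediate, and as a bonus each $E_l$ is a holomorphic subbundle, so that — as used later in the flag-manifold case — $\ker \bar\D\vert_{E_l}$ is precisely the space of holomorphic sections of $E_l$.
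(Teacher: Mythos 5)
Your proof is correct, but it takes a genuinely different route from the paper. You define the holomorphic structure implicitly as $\bar\partial := (\nabla^{S})^{0,1}$ and reduce the proposition to the integrability condition $\bar\partial^2 = 0$, i.e.\ to $(R^S)^{0,2} = 0$; you get this from $R^S = m_* \circ R$ together with the fact that the K\"ahler condition makes the Levi-Civita connection on $T^{1,0}M$ coincide with the Chern connection, whose curvature has pure type $(1,1)$, and you sensibly sidestep the infinite-rank issue by working componentwise on the finite-rank, $\nabla^S$-parallel eigenbundles $E_l$ and invoking Koszul--Malgrange there, finishing by the uniqueness of the Chern connection. The paper instead constructs the holomorphic structure explicitly: by Weyl's unitarian trick the unitary representation $m_l$ of $\tilde U(n)$ on $E_l^0$ extends to a holomorphic representation $\tilde m_l$ of $\widetilde{GL}(n,\C)$, so $E_l = P_{\widetilde{GL}(n,\C)} \times_{\tilde m_l} E_l^0$ carries holomorphic transition functions; that this agrees with the Chern connection is then read off from the fact that, in a local holomorphic trivialization, the connection form of $\nabla$ on $T^{1,0}M$ is of type $(1,0)$ (again the K\"ahler input), so the induced connection form on $E_l$ is $(1,0)$ as well and its $(0,1)$-part is $\bar\partial_{S_m}$. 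Both arguments rest on the same K\"ahler identity (Levi-Civita $=$ Chern on $T^{1,0}M$), but the paper's version is more constructive and avoids any appeal to Newlander--Nirenberg/Koszul--Malgrange, while yours is the standard curvature-type argument; your added observations (holomorphicity of the raising maps coming from $[\nabla^S,\sigma(V)] = \sigma(\nabla V)$, and consistency with the Fock picture $E_l \simeq S^l T^{1,0}M \otimes E_0$, with $E_0$ a square root of the anticanonical bundle) are correct and pleasant complements that the paper's proof does not spell out.
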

\begin{proof}
We have, by definition,
\[
E_l = P_{\tilde U(n)} \times_{m_l} E_l^0,
\]
where $m_l$ denotes the restriction to $E_l^0$ of the metaplectic representation.  Now by Weyl's unitarian trick, the unitary representation $m_l$ extends to a unique holomorphic representation $\tilde m_l$ of $\widetilde{GL}(n,\C)$ on $E_l^0$, where $\widetilde{GL}(n,\C)$ is the double cover of $GL(n,\C)$.  Thus $E_l = P_{\widetilde{GL}(n,\C)} \times_{\tilde m_l} E_l^0$ is holomorphic (having holomorphic transition functions $\tilde m_l \circ h_{\alpha\beta}$, where $h_{\alpha\beta}$ are holomorphic transition functions for $TM$). \\

The spinor connection is automatically compatible with the hermitian structure on $S_m$ since it is associated to $P_{\tilde U(n)}$ via a unitary representation.  Thus we are just left to show that its $(0,1)$ part is $\bar\partial_{S_m}$.  In a local trivialization $U \subset M$, the connection on $M$ is of the form $\nabla = d + A$ where $A \in \Om^{1,0}(U;\mathfrak{u}(n))$.  Then over $U$, $\nabla^{S_m} = d + \pi(A)$, which has $(0,1)$ part $\bar\partial_{S_m}$.
\end{proof}

\begin{cor}\label{holvac}
For $S = S_m$ or $S_F$, we have
\[
(\text{holomorphic sections of $E_l$}) \subseteq \ker \bar\D\vert_{E_l},
\]
which equality for $l=0$.
\end{cor}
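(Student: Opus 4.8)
The plan is to combine the local formula $\bar\D = -4i\sum_{j=1}^n \sigma(Z_j)\nabla^S_{\bar Z_j}$ recorded just above with the identification of the spinor connection with the Chern connection (Proposition~\ref{chernconn} for $S_m$, and the analogous fact noted for $S_F$). Throughout I use that $\nabla^S$ preserves the decomposition $S=\bigoplus_k E_k$, so the only operation that leaves $E_l$ is the final application of $\sigma(Z_j)\colon E_l\to E_{l+1}$.

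For the inclusion, let $\psi\in\Gamma(E_l)$ be holomorphic. Since the $(0,1)$-part of $\nabla^S$ on $E_l$ is $\bar\partial_{E_l}$, holomorphicity gives $\nabla^S_{\bar Z_j}\psi=0$ for each $j$ in any local unitary frame, and the local formula for $\bar\D$ then yields $\bar\D\psi=0$ at once.

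For equality when $l=0$, suppose $\psi\in\Gamma(E_0)$ satisfies $\bar\D\psi=0$. The key point is fiberwise and purely representation-theoretic: in the model representation $E_0^0$ is the one-dimensional ground state, and by \eqref{ladderops} the vectors $\sigma(Z_1)\phi,\dots,\sigma(Z_n)\phi$ (for $0\ne\phi\in E_0^0$) are $n$ linearly independent vectors in the $n$-dimensional space $E_1^0$, so that $E_1^0=\bigoplus_{j=1}^n\sigma(Z_j)E_0^0$ with each $\sigma(Z_j)\vert_{E_0^0}$ injective. Globalizing, each map $\phi\mapsto\sigma(Z_j)\phi$ is a fiberwise-injective bundle map $E_0\to E_1$ onto a line subbundle $W_j$, and $E_1=\bigoplus_j W_j$. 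Applying this with $\phi_j:=\nabla^S_{\bar Z_j}\psi\in\Gamma(E_0)$, the equation $0=\bar\D\psi=-4i\sum_j\sigma(Z_j)\phi_j$ together with the directness of $\bigoplus_j W_j$ forces $\sigma(Z_j)\phi_j=0$, hence $\phi_j=0$, for every $j$. Therefore $\bar\partial_{E_0}\psi=0$, i.e.\ $\psi$ is holomorphic.

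I expect the only real subtlety to be this $l=0$ step, namely checking that the images $W_j=\sigma(Z_j)E_0\subseteq E_1$ are independent and span $E_1$, so that a vanishing sum of the $\sigma(Z_j)\phi_j$ splits into vanishing of each term. This is transparent in the Fock model ($E_0=\C\cdot 1$, $\sigma(Z_j)\cdot 1=-\tfrac i2 Z_j$, $E_1=\operatorname{span}\{Z_1,\dots,Z_n\}$) and follows in general from \eqref{ladderops} (equivalently, from the weight decomposition \eqref{weights} of $E_1^0$ into distinct weight lines), since the fiber of any symplectic spinor bundle carries the standard metaplectic $\tilde U(n)$-representation. Everything else is the routine frame-by-frame computation above.
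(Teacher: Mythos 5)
Your proof is correct and follows essentially the same route as the paper: the inclusion comes from identifying $\nabla^S$ with the Chern connection (Proposition~\ref{chernconn}) plus the local formula $\bar\D = -4i\sum_j\sigma(Z_j)\nabla^S_{\bar Z_j}$, and equality for $l=0$ comes from the fiberwise observation that the maps $\sigma(Z_j)\colon E_0^0\to E_1^0$ are injective with independent images, forcing each $\nabla^S_{\bar Z_j}\psi$ to vanish separately. The paper states this in two terse sentences; you have simply written out the same argument in more detail.
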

\begin{proof}
The inclusion is immediate from the proposition and the local form of $\bar\D$.  Equality for $l=0$ follows from the local form of $\bar\D$ together with the algebraic fact that for $Z_1, \ldots, Z_n\ne 0$, the maps $\sigma(Z_j) : E_0^0 \to E_1^0$ take $1 \mapsto h_{0\cdots 0 1 0 \cdots 0}$, and therefore have independent images.
\end{proof}

\section{Riemann surfaces}
The case of complex dimension 1 is unique because here $\operatorname{rk} E_l = 1$ for all $l$ and we have following
\begin{prop}
For $M^2$, the operators $\D\vert_{E_l}$ and $\bar\D\vert_{E_l}$ are elliptic (assuming $l\ne 0$ in the case of $\D$).
\end{prop}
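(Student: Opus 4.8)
My plan is to read the principal symbols of $\D|_{E_l}$ and $\bar\D|_{E_l}$ off Proposition~\ref{symdol}(4) and to check that, in complex dimension one, they are invertible away from the zero section exactly under the stated hypotheses. The decisive simplification is that when $n=1$ we have $\operatorname{rk} E_l = 1$ for every $l$, so $\D|_{E_l} : \Gamma(E_l) \to \Gamma(E_{l-1})$ and $\bar\D|_{E_l} : \Gamma(E_l) \to \Gamma(E_{l+1})$ are first-order operators between \emph{line} bundles; such an operator is elliptic precisely when its symbol at each nonzero covector is a nonzero scalar. Moreover, since $\D$ and $\bar\D$ already respect the grading (Proposition~\ref{symdol}(1)), the symbol of $\D|_{E_l}$ is obtained simply by restricting the symbol of $\D$ to $E_l$, so no auxiliary projection needs to be tracked.

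The next step is to unwind these symbols pointwise. Fix $x\in M$ and a nonzero $\xi \in T^*_xM$. Then $\xi^\#\in T_xM$ is a nonzero real vector, so its $(0,1)$-part $\tfrac12(\xi^\# + iJ\xi^\#)$ is a nonzero element of the complex line $T^{0,1}_xM$ and its $(1,0)$-part $\tfrac12(\xi^\# - iJ\xi^\#)$ is a nonzero element of $T^{1,0}_xM$. Choosing a local unitary frame $\{a, b = Ja\}$ and writing $Z = \tfrac12(a - ib)$, $\bar Z = \tfrac12(a + ib)$, this says
\[
\xi^\# + iJ\xi^\# = c(\xi)\,\bar Z, \qquad \xi^\# - iJ\xi^\# = c'(\xi)\,Z, \qquad c(\xi),\ c'(\xi) \neq 0,
\]
so by Proposition~\ref{symdol}(4) the symbol of $\D$ at $\xi$ is $c(\xi)\,\sigma(\bar Z)$ and that of $\bar\D$ at $\xi$ is $c'(\xi)\,\sigma(Z)$. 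Restricting to $E_l$, equation~(\ref{ladderops}) (with $n = 1$ and $\beta_1 = l$) gives $\sigma(\bar Z) : h_l \mapsto -il\,h_{l-1}$ and $\sigma(Z): h_l \mapsto -\tfrac{i}{2}h_{l+1}$ on the line $E_l$.

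The conclusion then follows at once: the symbol of $\bar\D|_{E_l}$ at $\xi$ is the nonzero scalar $-\tfrac{i}{2}c'(\xi)$ for every $l \ge 0$, so $\bar\D|_{E_l}$ is elliptic; and the symbol of $\D|_{E_l}$ at $\xi$ is $-il\,c(\xi)$, which is nonzero exactly when $l \neq 0$, so $\D|_{E_l}$ is elliptic for $l \neq 0$ (for $l=0$ the symbol vanishes identically, in accordance with $E_{-1} = 0$ and $\D|_{E_0} = 0$, so the exclusion is genuinely needed). I do not expect a real obstacle here: the argument is a direct symbol computation. The only points needing a little care are the identification of $\xi^\# \pm iJ\xi^\#$ with nonzero multiples of the unitary frame vectors $\bar Z, Z$, and reading the scalars $-il$ and $-\tfrac{i}{2}$ off~(\ref{ladderops}).
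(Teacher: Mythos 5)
Your proof is correct and follows essentially the same strategy as the paper's: a pointwise symbol computation in the one-dimensional model. The only organizational difference is that you identify each symbol $\sigma(\xi^\# \pm iJ\xi^\#)$ individually as a nonzero multiple of the ladder operator $\sigma(Z)$ or $\sigma(\bar Z)$ and read off its nonvanishing from~(\ref{ladderops}), whereas the paper computes the composition $\sigma(\xi^\#+iJ\xi^\#)\sigma(\xi^\#-iJ\xi^\#)$ algebraically in the Weyl algebra and shows it is the nonzero scalar $-(2l+2)g_0(\xi^\#,\xi^\#)$; both yield the same conclusion.
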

\begin{proof}
We will show that if $(V, J_0, \om_0, g_0)$ is a two-dimensional hermitian vector space, then 
\[
\sigma(v+iJ_0v)\sigma(v-iJ_0v) : E_l^0 \to E_l^0
\]
is a non-zero scalar, from which the proposition will follow by proposition \ref{symdol}(4).  We have
\begin{align*}
\sigma((v+iJ_0v)(v- i J_0v)) &= \sigma(v^2 + (J_0v)^2 + i[J_0v,v]) \\
&= \sigma(v^2 + (J_0v)^2 + i \om(v,J_0v)) \\
&= \sigma(v^2 + (J_0v)^2 + ig(v,v)) \\
&= \sigma(v^2 + (J_0v)^2) - g(v,v).
\end{align*}
A straightforward calculation using a symplectic basis $\{a,b\}$ such that $\sigma(a) = it, \sigma(b) = \frac{d}{dt}$ shows that $\sigma(v^2 + (J_0v)^2) = 2g_0(v,v) H$.  Thus the above is equal to
\begin{gather*}
2g_0(v,v) H_0 - g_0(v,v) = -2g_0(v,v)(l+1/2) - g_0(v,v) \\
= -(2l+2) g_0(v,v) \ne 0.
\end{gather*}
\end{proof}

\begin{cor}(of proof)\label{kerDRS}
We have $\ker \bar\D\vert_{E_l}$ is exactly the space of holomorphic sections of $E_l$.
\end{cor}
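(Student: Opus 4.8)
The plan is to promote the inclusion of Corollary~\ref{holvac} to an equality when $\dim_\C M = 1$. One direction is already done: Corollary~\ref{holvac} gives that every holomorphic section of $E_l$ lies in $\ker\bar\D\vert_{E_l}$, so only the reverse inclusion $\ker\bar\D\vert_{E_l}\subseteq\{\text{holomorphic sections of }E_l\}$ needs proof.

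First I would pass to a local unitary frame. Because $n=1$ this is a single pair $\{Z,\bar Z\}$ with $Z\in T^{1,0}M$ and $\bar Z\in T^{0,1}M$, and the local expression for $\bar\D$ collapses to $\bar\D = -4i\,\sigma(Z)\,\nabla^S_{\bar Z}$. The key algebraic fact, which is exactly what the proof of the preceding proposition supplies, is that $\sigma(Z)\colon E_l\to E_{l+1}$ is injective on each fibre: in the fibre model~(\ref{ladderops}) it is $h_\beta\mapsto -\tfrac{i}{2}h_{\beta+1}$, and since $E_l$ and $E_{l+1}$ are line bundles here it is even a fibrewise isomorphism. Consequently, for $\psi\in\Gamma(E_l)$ one has $\bar\D\psi = 0$ if and only if $\nabla^S_{\bar Z}\psi = 0$ in every such frame.

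The final step is to read $\nabla^S_{\bar Z}\psi = 0$ as holomorphicity. By Proposition~\ref{chernconn} (for $S_m$) and by construction (for $S_F$), the $(0,1)$-part of $\nabla^S$ on $E_l$ is the $\bar\partial$-operator of the natural holomorphic structure; and since $\dim_\C M = 1$, the single vector $\bar Z$ spans $T^{0,1}M$ at each point of the chart. Hence ``$\nabla^S_{\bar Z}\psi = 0$ in all local unitary frames'' is literally ``$\bar\partial_{E_l}\psi = 0$'', i.e. $\psi$ is a holomorphic section of $E_l$; patching over charts gives $\ker\bar\D\vert_{E_l}\subseteq\{\text{holomorphic sections of }E_l\}$, and with Corollary~\ref{holvac} this is the asserted equality.

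I do not anticipate a real obstacle: the preceding proposition does all of the work, and the only point needing a little care is that the equivalences $\bar\D\psi=0 \Leftrightarrow \nabla^S_{\bar Z}\psi=0 \Leftrightarrow \bar\partial_{E_l}\psi=0$ hold on the nose precisely because in complex dimension one there is a single anti-holomorphic direction, so no mixed terms can intervene. This is exactly the feature that breaks down for $\dim_\C M>1$, and why Corollary~\ref{holvac} cannot in general be improved to an equality for $l>0$ there.
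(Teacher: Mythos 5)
Your proof is correct and follows essentially the same route as the paper: both use the local expression $\bar\D = -4i\,\sigma(Z)\nabla_{\bar Z}$, the fibrewise invertibility of $\sigma(Z)$ (you justify this directly from the ladder-operator formula~(\ref{ladderops}), while the paper cites the preceding ellipticity proposition --- equivalent observations in complex dimension one), and Proposition~\ref{chernconn} to read $\nabla_{\bar Z}\psi = 0$ as holomorphicity.
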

\begin{proof}
Locally, we have $\bar \D = -4i\sigma(Z) \nabla_{\bar Z}$  where $Z$ is a basis for $T^{1,0} M$ and $\bar Z$ a basis for $T^{0,1} M$.  By the proof of the previous proposition, $\sigma(Z)$ is invertible so $\ker \bar\D = \{\psi \in \Gamma(E_l) : \nabla_{\bar Z} \psi = 0\}$, which is the space of all holomorphic sections since $\nabla$ is the Chern connection (Lemma \ref{chernconn}).
\end{proof}

Similarly,
\begin{cor}(of proof)
The space $\ker D\vert_{E_l} = \ker \bar D \vert_{E_l}$ is the set of all parallel sections of $E_l$.
\end{cor}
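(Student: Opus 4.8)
The plan is to obtain the statement by combining the previous corollary with its mirror image for $\D$. The first ingredient is the observation already used in the proof of the corollary following Proposition \ref{symdol}: since $\D$ carries $\Gamma(E_l)$ into $\Gamma(E_{l-1})$ while $\bar\D$ carries it into $\Gamma(E_{l+1})$, and since $D=\frac{1}{2}(\D+\bar\D)$ and $\tilde D=\frac{1}{2i}(\D-\bar\D)$, a section $\psi\in\Gamma(E_l)$ lies in $\ker D\vert_{E_l}$ if and only if it lies in $\ker\tilde D\vert_{E_l}$, if and only if $\D\psi=0$ and $\bar\D\psi=0$. Thus $\ker D\vert_{E_l}=\ker\tilde D\vert_{E_l}=\ker\D\vert_{E_l}\cap\ker\bar\D\vert_{E_l}$, and it is enough to describe each of the two kernels on the right.

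By Corollary \ref{kerDRS} the second of these, $\ker\bar\D\vert_{E_l}$, is the space of holomorphic sections of $E_l$; unwinding that corollary's proof, it equals $\{\psi\in\Gamma(E_l):\nabla^S_{\bar Z}\psi=0\}$, where $\bar Z$ is a local frame for $T^{0,1}M$, $\nabla^S$ is the Chern connection on $E_l$ (Proposition \ref{chernconn}), and one uses that the bundle map $\sigma(Z) : E_l\to E_{l+1}$ is an isomorphism --- the point where complex dimension one enters. For the first kernel I would argue symmetrically from the local formula $\D=4i\,\sigma(\bar Z)\nabla^S_Z$: for $l\ge 1$ the bundle map $\sigma(\bar Z) : E_l\to E_{l-1}$ is again an isomorphism, since both bundles have rank one and, by (\ref{ladderops}) with $n=1$, $\sigma(\bar Z)$ sends the generator $h_l$ of $E_l$ to $-il\,h_{l-1}\ne 0$. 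Hence $\ker\D\vert_{E_l}=\{\psi\in\Gamma(E_l):\nabla^S_Z\psi=0\}$.

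Intersecting the two descriptions gives $\ker D\vert_{E_l}=\{\psi\in\Gamma(E_l):\nabla^S_Z\psi=0\text{ and }\nabla^S_{\bar Z}\psi=0\}$, and since $\{Z,\bar Z\}$ spans $TM\otimes\C$ at every point while $\nabla^S$ is $\C$-linear in its vector-field slot, this is exactly the space of parallel sections of $E_l$. I do not expect a real obstacle here: the only delicate points are verifying that the fiberwise Weyl actions $\sigma(Z),\sigma(\bar Z)$ are honest bundle isomorphisms in complex dimension one (immediate, as the relevant $E_j$ are line bundles) and the degenerate value $l=0$, where $\D\vert_{E_0}=0$ so that $\ker D\vert_{E_0}$ reduces to the holomorphic sections of $E_0$ rather than the parallel ones; accordingly the statement should be read for $l\ge 1$, consistent with the hypothesis of the preceding proposition.
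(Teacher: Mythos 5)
Your proposal is correct and follows exactly the route the paper intends by the annotation ``(of proof)'': combine the decomposition $\ker D\vert_{E_l}=\ker\D\vert_{E_l}\cap\ker\bar\D\vert_{E_l}$ with the invertibility of $\sigma(Z)$ and $\sigma(\bar Z)$ established in the ellipticity proposition, yielding $\nabla^S_Z\psi=0$ and $\nabla^S_{\bar Z}\psi=0$ and hence parallelism. Your observation about the degenerate case $l=0$ (where $\D\vert_{E_0}=0$ and the kernel is merely the holomorphic sections) is a correct reading of the implicit hypothesis carried over from the preceding proposition.
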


We will now compute the indices of the operators $\D$ and $\bar\D$ for $M_g$, a Riemann surface of genus $g$.  We have 
\[
\langle c_2(M_g), [M_g]\rangle = \langle e(M_g), [M_g]\rangle = \chi(M_g) = 2-2g
\]
is even, so that $M_g$ admits metaplectic structures.  Fix a metaplectic structure $P_{\tilde U(1)} \to M_g$ (of course $\tilde U(1) \simeq U(1)$).  

\begin{prop}\label{index}
For the metaplectic spinors, $S_m$, we have
\[
index~ \bar\D\vert_{E_l} = \dim\ker \bar\D\vert_{E_l} - \dim\ker \D\vert_{E_{l+1}} = (2l+2) (1-g)
\]
and for the Fock spinors, $S_F$, we have
\[
index~ \bar\D\vert_{E_l} = \dim\ker \bar\D\vert_{E_l} - \dim\ker \D\vert_{E_{l+1}} = (2l+1) (1-g).
\]
\end{prop}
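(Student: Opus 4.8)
The plan is to reduce $\operatorname{index}\bar\D\vert_{E_l}$ to the Riemann--Roch theorem by showing that, on a Riemann surface, $\bar\D\vert_{E_l}$ is the Dolbeault operator $\bar\partial_{E_l}$ of the holomorphic line bundle $E_l$ composed with a bundle isomorphism. First I would dispatch the middle equality in the statement: by Proposition \ref{selfad} the operators $D$ and $\tilde D$ are self-adjoint, so Proposition \ref{symdol}(3) shows that the formal adjoint of $\bar\D\vert_{E_l}\colon\Gamma(E_l)\to\Gamma(E_{l+1})$ is $\D\vert_{E_{l+1}}\colon\Gamma(E_{l+1})\to\Gamma(E_l)$; since $\bar\D\vert_{E_l}$ is elliptic, standard elliptic theory on the compact manifold $M_g$ gives $\dim\operatorname{coker}\bar\D\vert_{E_l}=\dim\ker\D\vert_{E_{l+1}}$, and hence $\operatorname{index}\bar\D\vert_{E_l}=\dim\ker\bar\D\vert_{E_l}-\dim\ker\D\vert_{E_{l+1}}$. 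It remains to evaluate this number.

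Next I would compare $\bar\D\vert_{E_l}$ with $\bar\partial_{E_l}$. In a local unitary frame $\bar\D\vert_{E_l}=-4i\,\sigma(Z)\nabla^S_{\bar Z}$, and by Proposition \ref{chernconn} (and, for the Fock spinors, the construction of the connection on $S_F$) the spinor connection on $E_l$ is the Chern connection of the natural holomorphic structure, so $\nabla^S_{\bar Z}\psi=\iota_{\bar Z}(\bar\partial_{E_l}\psi)$. I would then check that $\theta\otimes\phi\mapsto -4i\,\theta(\bar Z)\,\sigma(Z)\phi$ is a well-defined (frame-independent) bundle map $\Phi\colon\Lambda^{0,1}T^*M\otimes E_l\to E_{l+1}$, and that it is in fact an \emph{isomorphism} because $\sigma(Z)\colon E_l\to E_{l+1}$ is fiberwise invertible in complex dimension one --- exactly the computation recalled in the proof of Corollary \ref{kerDRS}. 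Then $\bar\D\vert_{E_l}=\Phi\circ\bar\partial_{E_l}$, so composing with the isomorphism $\Phi$ changes neither the kernel nor the dimension of the cokernel, and therefore
\[
\operatorname{index}\bar\D\vert_{E_l}=\operatorname{index}\bar\partial_{E_l}=\chi(M_g,\OO(E_l))=\deg E_l+1-g
\]
by Riemann--Roch, the rank of $E_l$ being one. (This reconfirms $\ker\bar\D\vert_{E_l}=H^0(M_g,\OO(E_l))$ from Corollary \ref{kerDRS}; alternatively one could invoke Atiyah--Singer directly via the symbol in Proposition \ref{symdol}(4).)

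Finally I would compute $\deg E_l$ in the two cases. For the Fock spinors, $E_l=S^l(T^{1,0}M)=(T^{1,0}M)^{\otimes l}$ and $\deg T^{1,0}M=-\deg K_{M_g}=2-2g$, so $\deg E_l=2l(1-g)$ and $\operatorname{index}\bar\D\vert_{E_l}=2l(1-g)+(1-g)=(2l+1)(1-g)$. For the metaplectic spinors, (\ref{fockmeta}) gives $E_l\cong E_0\otimes(T^{1,0}M)^{\otimes l}$ with $E_0^{\otimes 2}\cong T^{1,0}M$ (the case $n=1$ of $E_0^0\otimes E_0^0=\Lambda^n\C^n$), so $\deg E_0=1-g$, $\deg E_l=(2l+1)(1-g)$, and $\operatorname{index}\bar\D\vert_{E_l}=(2l+2)(1-g)$; the answer is independent of the chosen metaplectic structure, as it must be, since two such structures differ by a degree-zero line bundle.

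The main obstacle is the second step: being careful that $\bar\D\vert_{E_l}$ is genuinely an honest bundle isomorphism composed with $\bar\partial_{E_l}$, rather than merely agreeing with one to leading symbol order --- this is where the one-dimensionality of $M$ enters essentially, through the invertibility of $\sigma(Z)$ --- and checking that the contractions $\iota_{\bar Z}$ patch into an isomorphism of smooth line bundles $\Lambda^{0,1}T^*M\otimes E_l\to T^{1,0}M\otimes E_l$. Everything else --- adjointness of $\D$ and $\bar\D$, Riemann--Roch, and the degree bookkeeping --- is routine.
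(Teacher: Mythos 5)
Your proof is correct, and it takes a genuinely different route from the paper. The paper applies the Atiyah--Singer index theorem in the form of the index formula for $H$-structures: it encodes the symbol of $\bar\D\vert_{E_l}$ as a class in $K_{\tilde U(1)}(\C)$ pushed into $K(TM_g)$ and then evaluates the resulting Chern-character formula, with the main bookkeeping being the identification of the weight of $E_l^0$ as $-(2l+1)\alpha$ under the double-cover identification $\operatorname{Lie}(\tilde U(1))\simeq\operatorname{Lie}(U(1))$. You instead observe that on a curve the operator $\bar\D\vert_{E_l}=-4i\,\sigma(Z)\nabla^S_{\bar Z}$ factors through the Chern connection's $(0,1)$-part as $\Phi\circ\bar\partial_{E_l}$, where $\Phi\colon\Lambda^{0,1}T^*M\otimes E_l\to E_{l+1}$, $\theta\otimes\phi\mapsto -4i\,\theta(\bar Z)\,\sigma(Z)\phi$, is a well-defined and fiberwise invertible bundle map (well-definedness follows because a unitary change of frame rotates $Z$ by $e^{i\theta}$ and the dual $(0,1)$-coframe by the conjugate, so $\theta(\bar Z)\sigma(Z)$ is invariant; invertibility is exactly the nonvanishing of $\sigma(Z)$ recalled in the proof of Corollary \ref{kerDRS}). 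This reduces the index to $\chi(M_g,\OO(E_l))$ and Riemann--Roch, with the degree computation handled by $(E_0^m)^{\otimes 2}\cong K_{M_g}^{-1}$ and $E_l\cong E_0\otimes(T^{1,0}M_g)^{\otimes l}$ from (\ref{fockmeta}). Your approach is more elementary and makes transparent \emph{why} the index depends only on $\deg E_l$ (and, as you note, is insensitive to the choice of metaplectic structure, since two square roots of $K_{M_g}^{-1}$ differ by a $2$-torsion, hence degree-zero, line bundle); the paper's approach, while heavier, illustrates the $H$-structure index formalism that would still be available when $\sigma(Z)$ is not invertible and the operator is not simply $\bar\partial$ conjugated by a bundle isomorphism, i.e.\ in settings beyond complex dimension one. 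Your treatment of the middle equality, via $\D^*=\bar\D$ from Proposition \ref{symdol}(3) together with ellipticity and Hodge theory, is also fine.
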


Before proving this, we first present a quick consequence.  Since the space of holomorphic functions on a Riemann surface is one-dimensional, we have (for the Fock spinors) that $\dim\ker \bar\D\vert_{E_0} = 1$.  Since $E_1 \simeq TM_g$, the above theorem says that the space of all holomorphic vector fields $X$ with $\nabla_Z X = 0$ for $Z \in T^{1,0} M_g$ has dimension $g$.  Since $\nabla$ preserves the metric, this means that $\nabla_Z X^\flat = 0$, where $X^\flat \in \Om^{0,1}(M_g)$ is the image of $X$ under the isomorphism $\Gamma(TM_g) \simeq \Gamma(T^{1,0} M_g) \simeq \Om^{0,1}(M_g)$ induced by the metric.  Conjugating, we have proven the classical result
\begin{cor}
The dimension of the space of holomorphic sections of the canonical line bundle $T^*M_g \simeq \Lambda^{1,0} M_g$ is $g$.
\end{cor}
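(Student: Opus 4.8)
The plan is to extract the number $g$ from Proposition~\ref{index} applied to the Fock spinors at level $l=0$, after identifying $\ker\D\vert_{E_1}$ with the space of holomorphic sections of the canonical bundle $K := \Lambda^{1,0}M_g$. Evaluating the Fock-spinor index formula of Proposition~\ref{index} at $l=0$ gives $\dim\ker\bar\D\vert_{E_0} - \dim\ker\D\vert_{E_1} = (2\cdot 0 + 1)(1-g) = 1-g$. By Corollary~\ref{holvac} (or Corollary~\ref{kerDRS}), $\ker\bar\D\vert_{E_0}$ is exactly the space of holomorphic sections of $E_0 = S^0 T^{1,0}M_g$, i.e.\ the trivial line bundle; since $M_g$ is compact and connected these are the constants, so $\dim\ker\bar\D\vert_{E_0}=1$, and therefore $\dim\ker\D\vert_{E_1} = g$.

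Next I would unwind $\ker\D\vert_{E_1}$, as in the proof of Corollary~\ref{kerDRS} but with the roles of $\sigma(Z)$ and $\sigma(\bar Z)$ interchanged: in a local unitary frame $\D = 4i\,\sigma(\bar Z)\nabla^S_Z$, and by the ladder-operator formulas~(\ref{ladderops}) the map $\sigma(\bar Z)$ restricts to an isomorphism of line bundles $E_1 \to E_0$. Since $\nabla$ preserves the $E_l$-grading, it follows that $\ker\D\vert_{E_1} = \{X \in \Gamma(T^{1,0}M_g) : \nabla^{1,0}X = 0\}$, using the identification $E_1 = S^1 T^{1,0}M_g = T^{1,0}M_g$, where $\nabla^{1,0}$ denotes the $(1,0)$ part of the connection (which, being Levi-Civita on a K\"ahler manifold, is the Chern connection).

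Finally I would transport this to holomorphic $1$-forms. Since $g$ pairs $T^{1,0}M_g$ nondegenerately with $T^{0,1}M_g$, the map $X \mapsto X^\flat := g(X,\cdot)$ is a $\C$-linear isomorphism $\Gamma(T^{1,0}M_g) \to \Om^{0,1}(M_g)$; as $\nabla g = 0$ it intertwines the connections, so $\nabla^{1,0}X = 0$ if and only if $\nabla^{1,0}X^\flat = 0$. Conjugating this equation (the Levi-Civita connection is a real operator, so complex conjugation exchanges $\nabla^{1,0}$ with $\nabla^{0,1}$ and $\Om^{0,1}(M_g)$ with $\Om^{1,0}(M_g) = \Gamma(K)$) turns it into $\nabla^{0,1}\overline{X^\flat} = 0$; and by Proposition~\ref{chernconn} --- equivalently, by the standard fact that the connection induced on $K = (T^{1,0}M_g)^*$ by the Levi-Civita connection is the Chern connection --- one has $\nabla^{0,1}\vert_K = \bar\partial_K$. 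Hence $X \mapsto \overline{X^\flat}$ is a $\C$-antilinear, therefore dimension-preserving, bijection from $\ker\D\vert_{E_1}$ onto the space of holomorphic sections of $K$, which thus has dimension $g$.

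The computation is short; I expect the only points needing care to be the bookkeeping of the $E_l$-grading and of the complex conjugation entering the passage from $T^{1,0}M_g$ to its dual $K$, together with the identification $\nabla^{0,1}\vert_K = \bar\partial_K$, which is just Proposition~\ref{chernconn} once more.
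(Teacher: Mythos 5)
Your proposal is correct and takes essentially the same approach as the paper: specialize the Fock-spinor index formula of Proposition~\ref{index} to $l=0$ to deduce $\dim\ker\D\vert_{E_1}=g$, unwind $\ker\D\vert_{E_1}$ as the vector fields $X$ with $\nabla^{1,0}X=0$ using the pointwise invertibility of $\sigma(\bar Z)$, and then pass to holomorphic $1$-forms via metric lowering followed by complex conjugation. Your write-up spells out the $\C$-(anti)linearity of each step and the identification of $\nabla^{0,1}$ on the canonical bundle with $\bar\partial$ more explicitly, but the chain of identifications is exactly the one the paper sketches in the paragraph preceding the corollary.
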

\begin{proof}[Proof of proposition]
Let $\pi : T^* M_g \to M_g$.  We consider first the case of the metaplectic spinors.  The symbol class of the operator $\D \vert_{E_l} : \Gamma(E_l) \to \Gamma(E_{l+1})$ is the sequence
\[
\pi^* E_l \to \pi^* E_{l+1}, ~~ (\xi, \psi) \mapsto \sigma(\xi^\# - i J\xi^\#)\psi
\] 
of vector bundles over $T^*M_g$.  This lies in $K(T^*M_g)$, the $K$ theory of $T^*M_g$.  Now we have
\[
E_l = P_{\tilde U(1)} \times_m E_l^0, ~~ TM_g = P_{\tilde U(1)} \times_p \C,
\]
where $p$ denotes the double cover map $\tilde U(1) \to U(1)$.  The $\tilde U(1)$ equivariant map $\sigma : \C \to \Hom(E^0_l, E^0_{l+1}), v \mapsto \sigma(v - i Jv)$ determines an element in $K_{\tilde{U}(1)}(\C)$ and the symbol class of $\bar\D\vert_{E_l}$ is the image of this element under the map $K_{\tilde U(1)}(\C) \to K(TM_g)$.  In this case the symbol class is said to be associated to a $\tilde U(1)$-structure and there is a simple version of the Atiyah-Singer index theorem (the index theorem for $H$-structures) that computes its index \cite{atiyahsinger}.  In our case, since the Todd class of $M_g$ is just 1, the formula is
\[
\text{index} \bar\D\vert_{E_l} = \left\langle\frac{\ch E_{l+1}^0 - \ch E_l^0}{p^*(\alpha)}(P_{\tilde U(1)}), [M_g]\right\rangle.
\]
where $\alpha \in \mathfrak u(1)^*$ is the generator of the weight lattice of $\mathfrak u(1)$, which represents the first Chern class functor $c_1$.  Let $x$ be the generator of $H^2(M_g;\Z) \simeq \Z$ dual to the fundamental class $[M_g] \in H_2(M_g;\Z)$.  A subtle issue is that as a representation of $U(1)$, the weight of $E_l^0$ is $-(2l+1) \alpha$ since we need to identify $\operatorname{Lie}(\tilde U(1))$ with $\operatorname{Lie}(U(1))$ via multiplication by 2.  From this and the fact that $c_1(P_{\tilde U(1)}) = (1 - g)x$ since $c_1(M_g) = (2-2g)x$, we have
\begin{gather*}
\text{index} \bar\D\vert_{E_l} = \left\langle \frac{\frac{1}{2}(2l+3)^2 c_1^2 - \frac{1}{2} (2l+1)^2 c_1^2)}{2c_1}(P_{\tilde U(1)}, [M_g] \right\rangle \\
= \frac{1}{4} (8l + 8) \langle c_1(P_{\tilde U(1)}), [M_g] \rangle = (2l+2)(1-g).
\end{gather*}

We omit the computation for the Fock spinors since it is similar but slightly easier because now everything is associated to the principal $U(1)$ frame bundle and $p$ is just the identity map.  
\end{proof}

\section{Flag manifolds}
\subsection{Set-up}
Let $G$ be a simply-connected compact semi-simple Lie group of rank $k$ and fix a maximal torus $T$ and positive roots $\{\alpha_1, \ldots, \alpha_n\}$ such that the $\{\alpha_1,\ldots,\alpha_k\}$ are the simple roots.  Let $g_0$ be negative the Killing form.  Then we can find an orthonormal basis $\{E_{\alpha_j}, F_{\alpha_j}\}$ of $\t^\perp$ such that $Z_{\alpha_j} := \frac{1}{2}(E_{\alpha_j} -iF_{\alpha_j})$ and $\bar Z_{\alpha_j} := \frac{1}{2}(E_{\alpha_j} + iF_{\alpha_j})$ are root vectors for $\alpha_j$ and $-\alpha_j$, respectively.  We define a complex structure on $\t^\perp$ by
\[
J_0 : \t^\perp \to \t^\perp, ~~ E_\alpha \mapsto F_\alpha, ~~ F_\alpha \mapsto -E_\alpha.
\]
We define $\om_0$ a symplectic form on $\t^\perp$ by $\om_0(X,Y) = g_0(JX, Y)$.  Now we have
\[
T(G/T) = G \times_{Ad} \t^\perp
\]
and $g_0, J_0$ and $\om_0$ all become global objects, $g, J, \om$, making $(G/T, g, J, \om)$ into 
a Hermitian manifold.  This Hermitian structure is actually not K\"ahler but is a K\"ahler with torsion structure.  Thus instead of the Levi-Civita connection we use the canonical connection on $G/T$, under which $J, g$ and $\om$ are parallel.  Upon lowering indices, the torsion of this connection is
\[
(X,Y,Z) \mapsto -g_0([X,Y],Z), ~~ X, Y, Z \in \mathfrak t^\perp \simeq T_{eT} G/T ~~\text{    \cite{kobnom}}.
\]
This connection is also characterized as the unique connection with skew-symmetric torsion that preserves $(g, J, \om)$ \cite{friedivan,gaud}.  \\

Since we are not in the K\"ahler case, we must be careful with the results of section 2, which assume a K\"ahler structure.  However, with the exception of proposition \ref{selfad}, the proofs of all of the statements only assume that the connection used preserves the Hermitian structures.  For connections with torsion and parallel complex structure, a sufficient condition for $D$ and $\tilde D$ to be self-adjoint is the vanishing of the torsion vector field, defined by
\[
\mathcal T = \sum_{j=1}^n \mathbf T(a_j,b_j),
\]
where $\mathbf T$ is the torsion tensor and $\{a_1,\ldots,a_n, b_1,\ldots,b_n\}$ is a symplectic frame \cite{haber}.  In the case of flag manifolds, a symplectic basis at $eT$ is proportional to $\{Z_\alpha, Z_{-\alpha}\}$ where $Z_\alpha$ is a root vector for $\alpha$.  Since $\mathbf T(Z_\alpha, Z_{-\alpha}) = -[Z_\alpha, Z_{-\alpha}]_{\t^\perp} = 0$, we see that $\mathcal T$ vanishes.  Thus proposition \ref{selfad} continues to hold. \\

We note for future reference that
\begin{equation}\label{algeqs}
g_0(Z_\alpha, \bar Z_\beta) = \frac{1}{2}\delta_{\alpha \beta}, ~~ \om_0(Z_\alpha, \bar Z_\beta) = \frac{i}{2}\delta_{\alpha\beta}, ~~ [Z_{\alpha_j}, \bar Z_{\alpha_j}] = \frac{1}{2} H_{\alpha_j},
\end{equation}
 where $H_{\alpha_j} \in \t\otimes\C$ is dual to $\alpha_j$. \\

Since $G$ is simply connected, the map $G \to U(\g)$ lifts to $G \to \tilde U(\g)$.  Thus the map $T \to U(\t^\perp)$ lifts to $\tilde{Ad}: T \to \tilde U(\t^\perp)$ and $G/T$ has a metaplectic structure with principal $\tilde U(n)$ frame bundle
\[
G \times_{\tilde{Ad}} U(\t^\perp).
\]
This metaplectic structure is unique since $H^1(G/T;\Z/2)$ is trivial since $G/T$ is simply-connected. \\

The set $\{iH_\alpha : \alpha \text{ simple }\}$ is a basis for $\t$ and
\[
\text{ad}_{iH_\alpha}(E_\beta) = \beta(H_\alpha) F_\beta = g_0(\alpha,\beta) F_\beta.
\]
Thus
\begin{equation}\label{admap}
\text{ad} : \t \to \tilde{\mathfrak u}(\t^\perp) \subset W(\t^\perp), ~~ iH_\alpha\mapsto \frac{1}{2}\sum_{j=1}^n g_0(\alpha,\alpha_j) (E_{\alpha_j}^2 + F_{\alpha_j}^2).
\end{equation}
It follows from this and (\ref{weights}) that the weights of the representation
\[
T \stackrel{\tilde{Ad}}{\to} U(\t^\perp) \stackrel{m}{\to} U(E_l^0)
\]
are
\begin{equation}\label{weights}
\mu_{\beta_1 \ldots \beta_n}  = \sum_{i=1}^k \sum_{j=1}^n 2\frac{g(\alpha_i, \alpha_j)}{g(\alpha_i, \alpha_i)} (\beta_j + \frac{1}{2}) \om_i
\end{equation}
where $\beta_1 + \cdots + \beta_n = l$ and $\{\om_1,\ldots,\om_k\}$ are the fundamental weights, i.e.
\[
\om_i\left(\frac{2}{g(\alpha_j,\alpha_j)}H_{\alpha_j}\right) = \delta_{ij}, ~~ 1 \le i,j \le k.
\]
Later we will need
\begin{lem}\label{rho}
The weight $\mu_{0\cdots0}$ is equal to $\rho := \sum_{i=1}^k \om_i$.
\end{lem}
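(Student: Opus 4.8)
The plan is to evaluate the weight formula (\ref{weights}) at $\beta_1 = \cdots = \beta_n = 0$ and identify the result. Setting all $\beta_j = 0$ gives
\[
\mu_{0\cdots 0} = \sum_{i=1}^k c_i\, \om_i, \qquad c_i := \sum_{j=1}^n \frac{g(\alpha_i,\alpha_j)}{g(\alpha_i,\alpha_i)},
\]
where the inner sum runs over all $n$ positive roots $\alpha_1,\dots,\alpha_n$. Since $\{\om_1,\dots,\om_k\}$ is a basis of $\t^*$ and $\rho = \sum_i \om_i$ by definition, the lemma is equivalent to the claim that $c_i = 1$ for every simple root $\alpha_i$.

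To prove $c_i = 1$, I would first rewrite $c_i$ in terms of the half-sum of positive roots $\rho' := \tfrac12\sum_{\alpha>0}\alpha$. By linearity of $g$,
\[
c_i = \frac{g\bigl(\alpha_i,\ \textstyle\sum_{j=1}^n \alpha_j\bigr)}{g(\alpha_i,\alpha_i)} = \frac{2\,g(\alpha_i,\rho')}{g(\alpha_i,\alpha_i)} = \langle \rho',\alpha_i^\vee\rangle,
\]
where $\alpha_i^\vee = \tfrac{2}{g(\alpha_i,\alpha_i)} H_{\alpha_i}$ is the coroot $g$-dual to $\alpha_i$ (this is exactly the normalization used to define the $\om_i$ just before the lemma). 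Now the simple reflection $s_{\alpha_i}$ permutes the set of positive roots other than $\alpha_i$ and sends $\alpha_i \mapsto -\alpha_i$, so $s_{\alpha_i}(\rho') = \rho' - \alpha_i$; comparing with the general reflection formula $s_{\alpha_i}(\rho') = \rho' - \langle \rho',\alpha_i^\vee\rangle\,\alpha_i$ and using $\alpha_i \neq 0$ forces $\langle \rho',\alpha_i^\vee\rangle = 1$. Hence $c_i = 1$ for all $i$, and $\mu_{0\cdots 0} = \sum_i \om_i = \rho$. (Equivalently, one may simply invoke the classical identity $\tfrac12\sum_{\alpha>0}\alpha = \sum_i\om_i$.)

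This is entirely standard root-system combinatorics, so I do not expect a genuine obstacle. The only point requiring care is bookkeeping of normalizations: one must check that the bilinear form $g$ appearing in (\ref{weights}), the coroots $\alpha_i^\vee = \tfrac{2}{g(\alpha_i,\alpha_i)}H_{\alpha_i}$, and the fundamental weights $\om_i$ are all normalized consistently — which they are, by the conventions fixed in the set-up of this section. Once that is confirmed, the computation above is purely formal.
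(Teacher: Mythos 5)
Your proof is correct and takes essentially the same route as the paper: both reduce $\mu_{0\cdots 0}$ to the statement $\langle \tfrac12\sum_{\alpha>0}\alpha,\,\alpha_i^\vee\rangle = 1$ for each simple $\alpha_i$. The only difference is cosmetic — the paper cites the classical identity $\tfrac12\sum_{\alpha>0}\alpha = \sum_i\om_i$ and then unwinds the duality $\langle\om_j,\alpha_i^\vee\rangle=\delta_{ij}$, whereas you re-derive that identity via the simple-reflection argument; as you note yourself, the two are interchangeable.
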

\begin{proof}
We have
\begin{gather*}
\mu_{0\cdots0} = \sum_{i=1}^k \frac{1}{g(\alpha_i,\alpha_i)}g\left(\alpha_i,\sum_{j=1}^n \alpha_j\right) \om_i.
\end{gather*}
But, as is well known, $\frac{1}{2}\sum_{j=1}^n \alpha_j = \rho$.  Thus this is equal to
\begin{gather*}
\sum_{i=1}^k \frac{2}{g(\alpha_i,\alpha_i)}g\left(\alpha_i,\sum_{j=1}^k \om_j\right) = \sum_{i,j=1}^k g\left(\frac{2}{g(\alpha_i,\alpha_i)} \alpha_i, \om_j\right) \om_i \\
= \sum_{i,j=1}^k \delta_{ij} \om_i = \rho.
\end{gather*}
\end{proof}

The spinor bundles are given by
\[
E_l = G \times_{m \circ \tilde{Ad}} E_l^0.
\]
We will use the following identification
\[
\Gamma(E_l) \simeq \{ f : G \to E_l^0 ~ | f(gt) = m\circ \tilde{Ad}(t^{-1}) f(g), ~~ t \in T\}.
\]
Under this identification and the isomorphism $T(G/T) \simeq G\times_{Ad} \t^\perp$, the connection on these bundles is given by
\[
(\nabla_{[g,X]} f)(g) = X_g f,
\]
where on the right hand side we view $X \in \t^\perp$ as a left-invariant vector field on $G$, differentiating the function $f$ at the point $g \in G$. \\

The symplectic Dirac and Dolbeault operators are all $G$-equivariant and are given by
\begin{equation}\label{locformflag}
\begin{aligned}
D\phi &= \sum_{\alpha > 0} \left(\sigma(E_\alpha) F_\alpha \cdot \phi - \sigma(F_\alpha)E_\alpha\cdot \phi\right) \\
\tilde D \phi &= \sum_{\alpha > 0} \left(\sigma(E_\alpha) E_{\alpha} \cdot \phi+ \sigma(F_\alpha) F_\alpha \cdot \phi\right) \\
\D \phi &= i \sum_{\alpha > 0} \sigma(E_\alpha + i F_\alpha) (E_\alpha - i F_\alpha) \cdot \phi= 4i\sum_\alpha \sigma(\bar Z_\alpha) Z_\alpha \\
\bar\D \phi &= -i \sum_{\alpha > 0} \sigma(E_\alpha - i F_\alpha) (E_\alpha + i F_\alpha)\cdot \phi = -4i\sum_\alpha \sigma(Z_\alpha) \bar Z_\alpha,
\end{aligned}
\end{equation}
where $X \cdot\phi$ denotes differentiation of $\phi$ with respect to the left-invariant vector field $X$. \\

Being $G$-equivariant means that the kernels, cokernels, and indices of these operators lie in $R(G)$, the representation ring of $G$. 

\subsubsection{Twisting by an equivariant line bundle}
All equivariant line bundles on $G/T$ are given by $L_\mu = G\times_\mu T$, where $\mu$ is a weight.  Since
\[
L_{\mu_1} \otimes L_{\mu_2} \simeq L_{\mu_1 + \mu_2},
\]
if we twist our spinors by tensoring each $E_l$ with $L_{\mu - \rho}$, we will get vacuum state $(l=0)$ equal to $L_\mu$.  We denote the corresponding Dirac and Dolbeault operators $\D_\mu, \bar\D_\mu$. \\

We can now readily compute $\ker \bar\D_\mu\vert_{E_0} \in R(G)$.
\begin{prop}\label{kerD}
We have
\[
\ker\bar\D_\mu\vert_{E_0} = V_\mu,
\]
where $V_\mu$ denotes the representation of $G$ with highest weight $\rho$.  In particular, for the untwisted Dirac operator we have
\[
\ker \bar\D\vert_{E_0} = V_\rho
\] 
and the dimension of this space is $2^{\dim_\C G/T}$.
\end{prop}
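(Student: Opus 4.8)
The plan is to identify $\ker\bar\D_\mu\vert_{E_0}$ with the space of holomorphic sections of the line bundle $L_\mu$ and then apply the Borel--Weil theorem. By Corollary \ref{holvac}, $\ker\bar\D\vert_{E_0}$ equals the space of holomorphic sections of $E_0$; after the twist described above, $E_0$ becomes $L_{\mu}$ (the original $E_0 = L_{-\rho}$, so tensoring with $L_{\mu-\rho}$ gives $L_{-\rho}\otimes L_{\mu-\rho}$\dots — here I must be a bit careful about conventions, see below). Granting that the twisted vacuum bundle is the equivariant line bundle $L_\mu$ with $\mu$ a dominant weight, the Borel--Weil theorem identifies $H^0(G/T, L_\mu)$ with the irreducible $G$-representation $V_\mu$ of highest weight $\mu$ (with the appropriate sign convention on $L_\mu$ so that dominant $\mu$ gives a nonzero space). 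Setting $\mu = \rho$ recovers the untwisted case.

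The key steps, in order, are: (1) Recall from the construction that the untwisted vacuum bundle $E_0 = G\times_{m\circ\tilde{Ad}} E_0^0$ has, by Lemma \ref{rho}, weight $\rho$ (i.e. $E_0 \cong L_\rho$ as an equivariant line bundle — modulo the sign convention relating weights of $T$-representations to equivariant line bundles), so that after tensoring with $L_{\mu-\rho}$ the vacuum bundle is $L_\mu$. (2) Invoke Corollary \ref{holvac} to get $\ker\bar\D_\mu\vert_{E_0} = H^0(G/T, L_\mu)$, noting that the canonical connection on $G/T$, while not Levi-Civita, still induces the holomorphic (Chern) structure, since the relevant arguments in Proposition \ref{chernconn} and Corollary \ref{holvac} only use that the connection preserves the Hermitian structure. (3) Apply Borel--Weil: for $\mu$ dominant, $H^0(G/T, L_\mu) \cong V_\mu$ as $G$-representations, and this space is zero if $\mu$ is not dominant. (4) Specialize to $\mu = \rho$, which is dominant, to obtain $\ker\bar\D\vert_{E_0} = V_\rho$. (5) Compute the dimension: by the Weyl dimension formula, $\dim V_\rho = \prod_{\alpha>0}\frac{\langle \rho+\rho,\alpha\rangle}{\langle\rho,\alpha\rangle} = \prod_{\alpha>0} 2 = 2^{|\Delta^+|} = 2^{\dim_\C G/T}$, since the number of positive roots equals the complex dimension of $G/T$.

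The main obstacle I anticipate is bookkeeping rather than conceptual: pinning down the precise relationship between the $T$-weight of the fiber $E_0^0$ under $m\circ\tilde{Ad}$ (computed in \eqref{weights} and Lemma \ref{rho} to be $\mu_{0\cdots 0}=\rho$) and the equivariant line bundle $L_\mu = G\times_\mu T$, including which sign/dualization convention makes ``$\mu$ dominant $\Rightarrow$ $H^0(L_\mu)\neq 0$'' come out right. One also needs to be slightly careful that the holomorphic structure on $E_0$ obtained from the $\tilde U(n)$-reduction agrees with the one on $L_\rho$ coming from $G_{\C}/B$; this is exactly the content of Proposition \ref{chernconn} applied to the line bundle $E_0$, and it goes through because the canonical connection preserves the Hermitian structure (as noted in the flag-manifold set-up). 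Once these conventions are aligned, the statement follows immediately from Borel--Weil and the Weyl dimension formula, with no further analysis needed.
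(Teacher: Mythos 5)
Your proposal is correct and follows essentially the same route as the paper: apply Corollary \ref{holvac} to identify $\ker\bar\D_\mu\vert_{E_0}$ with holomorphic sections of the line bundle $E_0 = L_\mu$, invoke Borel--Weil to get $V_\mu$, and use the Weyl dimension formula for the count $2^{\dim_\C G/T}$. Your extra care about sign conventions for $L_\mu$ and the validity of Proposition \ref{chernconn} for the canonical (rather than Levi--Civita) connection is reasonable but not a different argument; the paper handles the latter point once and for all at the start of the flag-manifold section.
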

\begin{proof}
By corollary \ref{holvac}, $\ker \bar\D_\mu\vert_{E_0}$ is the space of holomorphic sections of $E_0$.  Since $E_0 = G \times_\mu \C$, it follows from the Borel-Weil theorem that this space, as a $G$ representation, is $V_\mu$. \\

The dimension for $V_\rho$ comes from the Weyl dimension formula:
\[
\dim V_\rho = \prod_{\alpha > 0} \frac{g_0(\rho + \rho, \alpha)}{g_0(\rho,\alpha)} = 2^{\#\{\alpha > 0 \}} = 2^{\dim_\C G/T}.
\]
\end{proof}

Since 
\[
P \vert_{E_0} = \frac{1}{2}(\D\bar\D\vert_{E_0} - \bar\D \D \vert_{E_0}) = \frac{1}{2}\D\vert_{E_1} \bar\D\vert_{E_0} = \frac{1}{2}\bar\D\vert_{E_0}^* \bar\D\vert_{E_0}
\]
we immediately get
\begin{cor}
We have
\[\ker P_\mu \vert_{E_0} = \ker \bar \D_\mu \vert_{E_0} = V_\mu.\]
\end{cor}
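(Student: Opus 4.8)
The plan is to exploit the factorization of $P_\mu$ on the ground state bundle $E_0$ that has essentially already been laid out in the sentence preceding the corollary. Concretely, I would first recall from Proposition \ref{symdol} that $\bar\D_\mu$ maps $\Gamma(E_0)$ into $\Gamma(E_1)$ and that $\D_\mu = \bar\D_\mu^*$, while $\D_\mu$ restricted to $\Gamma(E_0)$ vanishes identically because there is no bundle $E_{-1}$ (equivalently, $[H,\D_\mu] = \D_\mu$ forces $\D_\mu$ to lower the $H$-eigenvalue, and $E_0$ is the bottom of the spectrum). Hence on $\Gamma(E_0)$ we have
\[
P_\mu\vert_{E_0} = \tfrac12[\D_\mu,\bar\D_\mu]\vert_{E_0} = \tfrac12\bigl(\D_\mu\bar\D_\mu - \bar\D_\mu\D_\mu\bigr)\vert_{E_0} = \tfrac12\,\D_\mu\vert_{E_1}\,\bar\D_\mu\vert_{E_0} = \tfrac12\,\bigl(\bar\D_\mu\vert_{E_0}\bigr)^*\,\bar\D_\mu\vert_{E_0}.
\]

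Next I would invoke the standard positivity argument: since $P_\mu\vert_{E_0}$ is (a positive multiple of) an operator of the form $A^*A$ with $A = \bar\D_\mu\vert_{E_0}$, a section $\psi\in\Gamma(E_0)$ satisfies $P_\mu\psi = 0$ if and only if $\langle A^*A\psi,\psi\rangle = \|A\psi\|^2 = 0$, i.e. if and only if $A\psi = \bar\D_\mu\psi = 0$. This uses formal self-adjointness of $\D$ and $\tilde D$ (Proposition \ref{selfad}), which, as noted in the flag-manifold setup, continues to hold here because the torsion vector field $\mathcal T$ vanishes; it also uses that the $L^2$ pairing on $\Gamma(E_0)$ is positive definite, and that integration by parts is legitimate on the compact manifold $G/T$. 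Therefore $\ker P_\mu\vert_{E_0} = \ker\bar\D_\mu\vert_{E_0}$.

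Finally, I would combine this with the already-established identification $\ker\bar\D_\mu\vert_{E_0} = V_\mu$ from Proposition \ref{kerD} to conclude $\ker P_\mu\vert_{E_0} = V_\mu$, completing the proof. I do not anticipate any real obstacle: every ingredient is in place, and the only points needing a word of care are (i) that $\D_\mu$ genuinely annihilates $\Gamma(E_0)$ for degree reasons, so that the commutator collapses to a single composite, and (ii) that the self-adjointness/positivity step is valid in the KT (non-K\"ahler) setting — both of which have been dealt with earlier in the paper. One could even remark that this gives a representation-theoretic computation of the bottom of the spectrum of the elliptic operator $P_\mu$, namely that its kernel on $E_0$ is the irreducible $G$-representation $V_\mu$.
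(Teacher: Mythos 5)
Your proof is correct and follows exactly the paper's route: the factorization $P_\mu\vert_{E_0} = \tfrac12\bigl(\bar\D_\mu\vert_{E_0}\bigr)^*\bar\D_\mu\vert_{E_0}$ (valid because $\D_\mu$ annihilates the bottom level $E_0$), the $A^*A$ positivity argument to identify $\ker P_\mu\vert_{E_0}$ with $\ker\bar\D_\mu\vert_{E_0}$, and then Proposition \ref{kerD}. You have merely spelled out the positivity step and the self-adjointness caveat more explicitly than the paper, which states the factorization and says ``we immediately get.''
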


\subsection{A formula for $P$}
We focus now on the elliptic operator $P_\mu = \frac{1}{2}[\D_\mu,\bar\D_\mu]$.
\begin{prop}\label{Pform}
Let $\Om_\g$ be the Casimir element of $\g$ (viewed as as second order differential operator on $E_l$) and $\Om_\t = \sum_{j=1}^k h_j^2$ the Casimir element of $\t$ (where $\{h_j\}$ is an orthonormal basis for $\t$).  Then
\begin{align*}
P_\mu = -\Om_\g + \Om_{\t} + 2\sum_{\alpha > 0} (\sigma(Z_\alpha) \sigma(\bar Z_\alpha) +& \sigma(\bar Z_\alpha)\sigma(Z_\alpha)) H_\alpha  \\
&- 8 \sum_{\alpha\ne\beta > 0} \sigma(Z_\alpha) \sigma(\bar Z_\beta) [\bar Z_\alpha, Z_\beta]
\end{align*}
and
\[
P_\mu\vert_{E_0} = -\Om_\g + ||\mu||^2 + 2g_0(\rho, \mu) = -\Om_\g + ||\mu + \rho||^2 - ||\rho||^2,
\]
where $|| \cdot ||^2 = g(\cdot,\cdot)$.  \\
\end{prop}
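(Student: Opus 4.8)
The plan is to compute $P_\mu = \frac{1}{2}[\D_\mu, \bar\D_\mu]$ directly from the local formulas \eqref{locformflag}, namely $\D_\mu = 4i\sum_{\alpha>0} \sigma(\bar Z_\alpha)\, Z_\alpha\cdot$ and $\bar\D_\mu = -4i\sum_{\beta>0} \sigma(Z_\beta)\, \bar Z_\beta\cdot$, where now $\cdot$ denotes differentiation along the left-invariant vector field twisted by the weight $\mu-\rho$. First I would expand the commutator $[\D_\mu, \bar\D_\mu]$ as a double sum over positive roots $\alpha, \beta$. The key point is that the Weyl-algebra operators $\sigma(\bar Z_\alpha), \sigma(Z_\beta)$ commute with the differentiation operators $Z_\alpha\cdot, \bar Z_\beta\cdot$ only up to the Weyl connection's failure to be flat; more precisely, since $\sigma$ is $\tilde U(\t^\perp)$-equivariant and the connection is built from $\tilde{Ad}$, the operators $\sigma(Z)$ for $Z$ a fixed root vector are covariantly constant, so in computing the commutator one must be careful to apply the Leibniz rule and collect (a) terms where both differentiations act, giving second-order pieces, (b) terms where a differentiation hits a $\sigma$-factor through the structure constants of $\g$, and (c) terms where the two $\sigma$-factors are recombined via \eqref{com}.

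The main computation splits into the diagonal part $\alpha = \beta$ and the off-diagonal part $\alpha \ne \beta$. For $\alpha = \beta$, expanding $[\sigma(\bar Z_\alpha) Z_\alpha,\ \sigma(Z_\alpha)\bar Z_\alpha]$ I expect to produce: a term $\sigma(\bar Z_\alpha)\sigma(Z_\alpha)[Z_\alpha, \bar Z_\alpha]$ (and its partner) which, using $[Z_\alpha, \bar Z_\alpha] = \frac12 H_\alpha$ from \eqref{algeqs} and the differentiation of $f$ along $H_\alpha \in \t$, gives exactly the $2\sum_{\alpha>0}(\sigma(Z_\alpha)\sigma(\bar Z_\alpha)+\sigma(\bar Z_\alpha)\sigma(Z_\alpha))H_\alpha$ term (after using \eqref{com} to symmetrize); plus the second-order term $\sigma(\bar Z_\alpha)\sigma(Z_\alpha)(Z_\alpha\bar Z_\alpha\cdot + \bar Z_\alpha Z_\alpha\cdot)$ type expressions. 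The strategy for assembling the second-order piece is to recognize $\sum_{\alpha>0}(Z_\alpha \bar Z_\alpha + \bar Z_\alpha Z_\alpha)$ together with the $\t$-contribution as essentially the Casimir $\Omega_\g$: indeed $\Omega_\g = \sum_j h_j^2 + \sum_{\alpha>0}(Z_\alpha \bar Z_\alpha + \bar Z_\alpha Z_\alpha)\cdot(\text{normalization})$, so that rearranging yields $-\Omega_\g + \Omega_\t$ after the $\sigma$-factors have been normalized using \eqref{com} and \eqref{algeqs}. For $\alpha \ne \beta$, the differentiations $Z_\alpha\cdot$ and $\bar Z_\beta\cdot$ do not annihilate each other's $\sigma$-factors but their commutator $[Z_\alpha, \bar Z_\beta]$ lands in $\t^\perp$ (it is a root vector, not in $\t$), producing exactly the term $-8\sum_{\alpha\ne\beta>0}\sigma(Z_\alpha)\sigma(\bar Z_\beta)[\bar Z_\alpha, Z_\beta]$; I would double check the factor of $8$ and the sign against the factor $4i\cdot(-4i)=16$ from the two Dolbeault operators combined with the $\frac12$ in the definition of $P$, and against the $\frac12$ appearing in \eqref{algeqs}.

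The hard part will be the careful bookkeeping of which differentiation hits which $\sigma$-factor and the precise normalization constants: there are competing factors of $\frac12$ coming from $Z_\alpha = \frac12(E_\alpha - iF_\alpha)$, from $[Z_\alpha,\bar Z_\alpha] = \frac12 H_\alpha$, and from $g_0(Z_\alpha, \bar Z_\beta) = \frac12\delta_{\alpha\beta}$, together with the $16 = 4\cdot 4$ from the two Dolbeault operators, and it is easy to be off by a power of $2$. I would organize this by fixing once and for all the identification of $\Omega_\g$ with $\sum_j h_j^2 + 2\sum_{\alpha>0}(Z_\alpha\bar Z_\alpha + \bar Z_\alpha Z_\alpha)$ (the factor $2$ coming from $g_0(Z_\alpha,\bar Z_\alpha)=\frac12$), and then matching term by term. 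For the restriction to $E_0$: on the vacuum bundle $\bar\D_\mu|_{E_0}$ already satisfies $P_\mu|_{E_0} = \frac12 \bar\D_\mu|_{E_0}^*\bar\D_\mu|_{E_0}$, and $\ker\bar\D_\mu|_{E_0} = V_\mu$ by Proposition \ref{kerD}, so $\Omega_\g$ acts on this irreducible representation by the scalar $-\|\mu+\rho\|^2 + \|\rho\|^2$ (Freudenthal/Casimir eigenvalue, with the sign fixed by $g_0 = -$Killing). Meanwhile on $E_0$ the off-diagonal $\sigma$-terms annihilate the lowest Hermite vector (since $\sigma(\bar Z_\alpha)$ lowers and the vacuum is killed), the diagonal $H_\alpha$-term contributes $\Omega_\t$'s counterpart, and collecting gives $-\Omega_\g + \|\mu\|^2 + 2g_0(\rho,\mu)$; rewriting $\|\mu\|^2 + 2g_0(\rho,\mu) = \|\mu+\rho\|^2 - \|\rho\|^2$ is algebraic. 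I expect the $E_0$ statement to follow quickly once the general formula is in hand, so essentially all the difficulty is in the constant-tracking of the main identity.
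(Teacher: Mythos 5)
Your proposal follows essentially the same route as the paper's proof: expanding $P_\mu=\tfrac12[\D_\mu,\bar\D_\mu]$ directly from the local formulas \eqref{locformflag}, reordering the second-order differentiations to pick up the vector-field commutators $[\bar Z_\alpha,Z_\beta]$, using \eqref{com} to produce $[\sigma(Z_\alpha),\sigma(\bar Z_\beta)]=\tfrac12\delta_{\alpha\beta}$, splitting diagonal vs.\ off-diagonal, recognizing $\Om_\g-\Om_\t=2\sum_{\alpha>0}(Z_\alpha\bar Z_\alpha+\bar Z_\alpha Z_\alpha)$, and then specializing to $E_0$ via vanishing of $\sigma(\bar Z_\alpha)$ and the weight action of $\Om_\t$ and $H_\rho$. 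One small imprecision: since the $\sigma$-factors are covariantly constant in the equivariant picture (constant endomorphisms of the model fiber $E_l^0$), the structure constants enter only through reordering the differentiation operators (producing $[\bar Z_\alpha, Z_\beta]$), not through ``a differentiation hitting a $\sigma$-factor'' --- but this does not affect your actual computational plan, which is correct.
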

\begin{rk} Our Casimir's are negative of the usual ones associated to the Killing form, since our metric is minus the Killing form.  One observes that the term $||\mu + \rho||^2 - ||\rho||^2$ is the infinitesimal character of the Casimir $\Om_\g$ for the representation $V_\mu$.
\end{rk}
\begin{proof}
By (\ref{locformflag}) $P_\mu = 8\sum_{\alpha,\beta > 0} [\sigma(\bar Z_\alpha) Z_\alpha, \sigma(Z_\beta)\bar Z_\beta]$.  We have
\begin{align*}
2 [\sigma(Z_\alpha)& \bar Z_\alpha, \sigma(\bar Z_\beta) Z_\beta] \\
&= \sigma(Z_\alpha)\sigma(\bar Z_\beta) \bar Z_\alpha Z_\beta + \sigma(Z_\alpha)\sigma(\bar Z_\beta) \bar Z_\alpha Z_\beta - \sigma(\bar Z_\beta)\sigma(Z_\alpha) Z_\beta \bar Z_\alpha \\
 &\hspace{.65in} - \sigma(\bar Z_\beta)\sigma(Z_\alpha) Z_\beta \bar Z_\alpha \\
&= \sigma(Z_\alpha)\sigma(\bar Z_\beta) (Z_\beta \bar Z_\alpha + [\bar Z_\alpha, Z_\beta]) + \sigma(Z_\alpha)\sigma(\bar Z_\beta) \bar Z_\alpha Z_\beta \\ 
 &\hspace{.65in} - \sigma(\bar Z_\beta)\sigma(Z_\alpha) (\bar Z_\alpha Z_\beta + [Z_\beta, \bar Z_\alpha]) - \sigma(\bar Z_\beta)\sigma(Z_\alpha) Z_\beta \bar Z_\alpha \\
&= [\sigma(Z_\alpha), \sigma(\bar Z_\beta)] (Z_\beta \bar Z_\alpha + \bar Z_\alpha Z_\beta) +\\
&\hspace{.65in} (\sigma(Z_\alpha) \sigma(\bar Z_\beta) + \sigma(\bar Z_\beta)\sigma(Z_\alpha)) [\bar Z_\alpha, Z_\beta] \\
&= -i\om(Z_\alpha, \bar Z_\beta)(Z_\beta \bar Z_\alpha + \bar Z_\alpha Z_\beta) + (\sigma(Z_\alpha) \sigma(\bar Z_\beta) \\
&\hspace{.65in}+ \sigma(\bar Z_\beta)\sigma(Z_\alpha)) [\bar Z_\alpha, Z_\beta] \\
&= \frac{1}{2} \delta_{\alpha\beta} (Z_\beta \bar Z_\alpha + \bar Z_\alpha Z_\beta) + (\sigma(Z_\alpha) \sigma(\bar Z_\beta) + \sigma(\bar Z_\beta)\sigma(Z_\alpha)) [\bar Z_\alpha, Z_\beta].
\end{align*} 
Thus multiplying the above by -4 and using (\ref{algeqs}),
\begin{align*}
P_\mu &= -2\sum_{\alpha > 0} (Z_\alpha \bar Z_\alpha +  \bar Z_\alpha Z_\alpha) - 4\hspace{-4pt}\sum_{\alpha,\beta > 0} \hspace{-4pt}(\sigma(Z_\alpha) \sigma(\bar Z_\beta) + \sigma(\bar Z_\beta)\sigma(Z_\alpha)) [\bar Z_\alpha, Z_\beta] \\
&= -\Om_\g + \Om_{\t} + 2 \sum_{\alpha > 0} (\sigma(Z_\alpha) \sigma(\bar Z_\alpha) + \sigma(\bar Z_\alpha)\sigma(Z_\alpha)) H_\alpha \\
&\hspace{1in}-8\sum_{\alpha\ne\beta > 0} \sigma(Z_\alpha) \sigma(\bar Z_\beta) [\bar Z_\alpha, Z_\beta].
\end{align*}
The last term follows from the fact that $\sigma(\bar Z_\beta)$ and $\sigma(Z_\alpha)$ commute since $\om(\bar Z_\beta, Z_\alpha) = 0$ for $\alpha \ne \beta$.  \\

For $P_\mu\vert_{E_0}$ we see from (\ref{ladderops}) that the operators $\sigma(\bar Z_\alpha)$ are identically zero on $E_0$ and each $\sigma(Z_\alpha)\sigma(\bar Z_\alpha) + \sigma(\bar Z_\alpha)\sigma(Z_\alpha)$ acts as $-1/2$.  Thus
\[
P\vert_{E_0} = -\Om_\g + \Om_\t - 2\sum_{\alpha > 0} \frac{1}{2} H_\alpha = -\Om_\g + \Om_\t - 2H_{\frac{1}{2}\sum_{\alpha > 0} \alpha} = -\Om_\g + \Om_\t - 2H_\rho.
\]
By the equivariance condition on sections of $E_0$, $\Om_\t = \sum_j h_j^2$ acts as $\sum_j (-\mu(h_j))^2 = ||\mu||^2$ and $H_\rho$ acts as $-g_0(\rho, \mu)$.
\end{proof}

\begin{prop}\label{Pspec}
As before, let $V_\gamma$ denote the representation of $G$ with highest weight $\gamma$.  The spectrum of $P_\mu\vert_{E_0}$ is 
\[
\operatorname{spec} P_\mu\vert_{E_0} = \{-||\gamma + \rho||^2  + ||\mu + \rho||^2: V_\gamma \text{ contains $\mu$ as a weight}\}.
\]
and the eigenspace with value $\lambda$ is
\[
\bigoplus_{-||\gamma+\rho||^2 + ||\mu + \rho||^2 = \lambda} (\dim V_\gamma(\mu))V_\gamma \in R(G),
\]
where $V_\gamma(\mu)$ denotes the $\mu$ weight space of the representation $V_\gamma$.
\end{prop}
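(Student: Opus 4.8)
The plan is to diagonalize $P_\mu\vert_{E_0}$ by combining the decomposition of $\Gamma(E_0)$ into $G$-irreducibles with the formula of Proposition \ref{Pform}. With its left $G$-action, $\Gamma(E_0)$ is naturally the induced representation $\operatorname{Ind}_T^G\C_\mu = \{f:G\to\C : f(gt)=\mu(t)^{-1}f(g)\}$: the fiber $E_0^0$ is one-dimensional with $T$ acting by the weight $\rho$ (Lemma \ref{rho}), and twisting by $L_{\mu-\rho}$ shifts this character to $\mu$. By Peter--Weyl and Frobenius reciprocity, $\operatorname{Hom}_G(V_\gamma,\Gamma(E_0))\cong\operatorname{Hom}_T(V_\gamma,\C_\mu)$, which has dimension $\dim V_\gamma(\mu)$; hence as a $G$-representation $\Gamma(E_0)\cong\bigoplus_\gamma(\dim V_\gamma(\mu))\,V_\gamma$, the sum running over dominant $\gamma$ for which $\mu$ is a weight of $V_\gamma$. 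Since $||\gamma+\rho||^2$ is bounded below and equals any fixed value for only finitely many dominant $\gamma$, each isotypic ``energy level'' is finite-dimensional, as it must be since $P_\mu\vert_{E_0}$ is elliptic.

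Next, by Proposition \ref{Pform} we have $P_\mu\vert_{E_0} = -\Om_\g + (||\mu+\rho||^2-||\rho||^2)$ as a differential operator on $\Gamma(E_0)$. The Casimir $\Om_\g$, being formed from a $g_0$-orthonormal basis of $\g$, is bi-invariant, so it commutes with the $G$-action on $\Gamma(E_0)$ and, by Schur's lemma, acts as a scalar on each isotypic component $(\dim V_\gamma(\mu))V_\gamma$. Evaluating on a matrix coefficient of $V_\gamma$ shows this scalar is the Casimir eigenvalue of $V_\gamma$, namely $||\gamma+\rho||^2-||\rho||^2$ --- the infinitesimal character mentioned in the remark following Proposition \ref{Pform}, via Freudenthal's formula. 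Hence $P_\mu\vert_{E_0}$ acts on the $V_\gamma$-component by the scalar
\[
-\bigl(||\gamma+\rho||^2-||\rho||^2\bigr)+\bigl(||\mu+\rho||^2-||\rho||^2\bigr) = -||\gamma+\rho||^2+||\mu+\rho||^2 .
\]
Ranging over all $\gamma$ occurring in the decomposition gives the claimed spectrum, and grouping the isotypic blocks by the common value of $-||\gamma+\rho||^2+||\mu+\rho||^2$ gives the claimed eigenspace in $R(G)$; since Peter--Weyl yields the entire decomposition, every such $\gamma$ genuinely occurs, so the spectrum is exactly (not merely contained in) the displayed set.

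Given Proposition \ref{Pform}, the argument is short and I expect no serious obstacle; the delicate points are purely bookkeeping. First, one must fix conventions so that the Frobenius-reciprocity multiplicity comes out as $\dim V_\gamma(\mu)$ rather than $\dim V_\gamma(-\mu)$; this is forced by consistency with Proposition \ref{kerD}, where for $\gamma=\mu$ one has $\dim V_\mu(\mu)=1$ and recovers the unique holomorphic copy of $V_\mu$. Second, one must justify that the \emph{differential} operator $\Om_\g$ acts on each isotypic block by the representation-theoretic Casimir scalar --- the matrix-coefficient computation using bi-invariance --- and here the normalization noted in the remark (``our Casimirs are negative of the usual ones'') must be carried along so that the sign in $-||\gamma+\rho||^2+||\mu+\rho||^2$ comes out correctly.
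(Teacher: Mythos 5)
Your argument is correct and essentially coincides with the paper's: both decompose $\Gamma(E_0)$ via Peter--Weyl (Frobenius reciprocity for compact groups being the same fact repackaged) and then read off the Casimir eigenvalue $||\gamma+\rho||^2 - ||\rho||^2$ on each isotypic block. The one cosmetic difference is that the paper tracks explicitly that $\Om_\g$, built from left-invariant vector fields, acts through the right regular representation and hence a priori produces the eigenvalue of $V_{\gamma^*}$, then uses $\gamma^*=-w_0\gamma$ and $w_0\rho=-\rho$ to get $||\gamma^*+\rho||^2 = ||\gamma+\rho||^2$; you compress this by invoking bi-invariance of the Casimir, which is a perfectly valid and slightly slicker way to land on the $V_\gamma$ eigenvalue directly. (Minor nit: the Casimir eigenvalue $\langle\gamma,\gamma+2\rho\rangle$ is a standard fact but is not usually called ``Freudenthal's formula,'' which refers to the weight-multiplicity recursion.)
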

\begin{rk}
Observe that the eigenvalue corresponding to the weight $\gamma$ is the difference of the infinitesimal characters of $\Om_\g$ for the highest weight modules $V_\mu$ and $V_\gamma$.
\end{rk}
\begin{proof}
By proposition \ref{Pform}, we just need to determine the spectrum of $\Om_\g$ acting on $\Gamma(E_0)$.  Recall that a section of $E_0$ is a function $f: G \to \C$ such that 
\begin{equation}\label{eqcond}
f(ge^H) = e^{-\mu(H)} f(g).
\end{equation}
Let $\gamma^*$ denote the highest weight of the representation $V_\gamma^*$.  Then the Peter-Weyl theorem gives us a decomposition
\[
L^2(G) \simeq \bigoplus_{\gamma \text{ dominant}} V_\gamma \otimes V_{\gamma^*}
\]
where the left regular and right regular representation of $G$ on $L^2(G)$ intertwines with the actions on $V_\gamma$ and $V_{\gamma^*}$, respectively.  \\

Since we are acting by left-invariant vector fields, whose flows generate the right regular representation, $\Om_\g$ acts only on the right and the above decomposition is diagonal with respect to $\Om_\g$. As is well-known, $\Om_\g$ acts as $||\gamma^* + \rho||^2 - ||\rho||^2$ on $V_{\gamma^*}$.  Thus each $f_\gamma \in V_\gamma \otimes V_{\gamma^*}$ is an eigenvector for $\Om_\g$ with eigenvalue $||\gamma^* + \rho||^2 - ||\rho||^2$.  But $\gamma^* = -w_0 \gamma$ where $w_0$ is the unique element of the Weyl group mapping the positive Weyl chamber to its negative.  It is well-known that $w_0 \rho = -\rho$ so that
\[
||\gamma^* + \rho||^2 = ||-w_0 \gamma - w_0\rho||^2 = ||\gamma + \rho||^2,
\]
since the inner product is invariant. \\

Now, in order for $f_\gamma$ to be non-zero and satisfy the equivariance condition (\ref{eqcond}), the $V_{\gamma^*}$ factor of $f_\gamma$ must be in the $-\mu$ weight space.  But $-\mu$ is a weight of $V_{\gamma}^*$ if and only if $\mu$ is a weight of $V_\gamma$.
\end{proof}

As a quick application, we show that the spectrum of $P\vert_{E_0}$ retains enough information about the representation theory of $G$ to distinguish between the flag manifolds associated to the Lie algebras $B_n = \mathfrak o(2n+1)$ and $C_n = \sp(n)$.
\begin{cor}
For $n \ge 3$, the flag manifolds $Spin(2n+1)/T$ and $Sp(n)/T$ are not isomorphic as Hermitian manifolds (with metric induced by the Killing form and complex structure from a choice of positive roots).
\end{cor}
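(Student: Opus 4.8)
The plan is to use the fact that the spectrum of the elliptic operator $P\vert_{E_0}$ is an invariant of the Hermitian manifold $(G/T,g,J)$. Indeed, the metaplectic structure on $G/T$ is unique (as noted above, since $H^1(G/T;\Z/2)=0$), the canonical connection is determined by $(g,J)$ alone, and hence $P=i[\tilde D,D]$ acting on the ground-state bundle $E_0$ is canonically attached to $(G/T,g,J)$; a Hermitian isomorphism $G_1/T\to G_2/T$ therefore lifts canonically to an isomorphism of the metaplectic spinor data intertwining the two copies of $P\vert_{E_0}$. So if $Spin(2n+1)/T$ and $Sp(n)/T$ were isomorphic as Hermitian manifolds, the spectra of the corresponding operators $P\vert_{E_0}$ would coincide as multisets, and I will reach a contradiction by comparing the first two nonzero eigenvalues.

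By Proposition \ref{Pspec} applied with $\mu=\rho$ (the untwisted case), the nonzero eigenvalues of $P\vert_{E_0}$ are exactly the numbers $-(\|\gamma+\rho\|^2-\|2\rho\|^2)$ as $\gamma$ runs over the dominant weights $\ne\rho$ having $\rho$ as a weight. Since $\gamma$ and $\rho$ are both dominant, such a $\gamma$ must have the form $\gamma=\rho+\xi$ with $\xi$ a nonzero element of the cone $Q_{\ge 0}$ of non-negative integer combinations of the simple roots for which $\rho+\xi$ is again dominant, and the associated eigenvalue is then $-c(\xi)$ with $c(\xi):=4\langle\rho,\xi\rangle+\|\xi\|^2>0$; moreover this eigenvalue occurs with the strictly positive multiplicity $(\dim V_{\rho+\xi}(\rho))(\dim V_{\rho+\xi})$, so it genuinely appears in the spectrum. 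Using $\langle\rho,\alpha_j\rangle=\tfrac12\|\alpha_j\|^2$ for every simple root, so that $c(\alpha_j)=3\|\alpha_j\|^2$, a direct check with the standard models of the root systems yields the following. For $B_n$ the minimum of $c$ over valid $\xi$ is $3s^2$, attained only by the (unique) short simple root, for which $\rho+\xi$ is indeed dominant, and the next value is $6s^2=3\ell^2$, attained by the long simple roots $\alpha_j$ with $j\le n-2$; here $s,\ell$ denote the short and long root lengths, and one checks that $\rho+\alpha_{n-1}$ fails to be dominant, so the long root adjacent to the short one does not contribute at level $3s^2$ or $6s^2$ through a single simple root. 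For $C_n$ the minimum of $c$ is $3s^2$, attained by the short simple roots $\alpha_j$ with $j\le n-1$, all of which give $\rho+\alpha_j$ dominant, whereas $\rho+\alpha_n$ for the long simple root is not dominant; the next value is $5s^2$, attained by $\xi=\alpha_j+\alpha_{j+1}$ with $j\le n-2$, which is again a short root, with $\langle\rho,\xi\rangle=s^2$ and $\|\xi\|^2=s^2$. In both cases the intermediate values are excluded by an elementary parity-and-length count in the standard coordinates.

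It follows that the ratio of the first two nonzero eigenvalues of $P\vert_{E_0}$ that are closest to $0$ — a quantity manifestly unchanged under rescaling the metric, hence independent of how one normalizes the Killing form — equals $6s^2/3s^2=2$ for $Spin(2n+1)/T$ and $5s^2/3s^2=5/3$ for $Sp(n)/T$. The hypothesis $n\ge3$ is precisely what guarantees that one of the contributing weights (a long simple root $\alpha_j$ of $B_n$, respectively a sum $\alpha_j+\alpha_{j+1}$ of two short simple roots of $C_n$, with $j\le n-2$) exists; this is consistent with the exceptional isomorphism $Spin(5)\cong Sp(2)$ at $n=2$, where the same computation in fact gives the common ratio $7/3$ on both sides. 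Since $2\ne 5/3$, the two spectra differ, so the flag manifolds are not isomorphic as Hermitian manifolds.

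The delicate point, and the genuine source of the $B_n$--$C_n$ asymmetry, is the determination of which weights $\rho+\alpha_j$ are dominant: the double bond of the Dynkin diagram "blocks" the short simple root in $C_n$ but the long simple root in $B_n$, which forces the second nonzero eigenvalue of $P\vert_{E_0}$ to sit at a different multiple of the first in the two cases. The only other care needed is to confirm that $P\vert_{E_0}$ and its spectrum really are Hermitian invariants, which follows from the uniqueness of the metaplectic structure on $G/T$ together with the naturality of the entire construction of $P$ from $(g,J)$.
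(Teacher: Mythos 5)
Your proposal is correct, and it takes a genuinely different route to the conclusion than the paper does. The paper works with the Fock spinors (so $\mu=0$ in Proposition~\ref{Pspec}) and compares the \emph{dimension of the eigenspace} of the smallest positive eigenvalue of $P\vert_{E_0}$: for $\mathfrak{o}(2n+1)$ this eigenspace is the defining representation $V_{\omega_1}=\C^{2n+1}$, and the paper then invokes the fact that for $n\ge 3$ no representation of $\sp(n)$ of dimension $2n+1$ both contains $0$ as a weight and has matching multiplicity, relying on specific low-dimensional representation theory of $\sp(n)$. You instead use the metaplectic spinors (so $\mu=\rho$), exploit that $\gamma$ having $\rho$ as a weight forces $\gamma=\rho+\xi$ with $\xi\in Q_{\ge0}$ and $\rho+\xi$ dominant, and compare the \emph{ratio of the first two nonzero eigenvalues}, $6s^2/3s^2=2$ versus $5s^2/3s^2=5/3$, reducing the problem to a combinatorial analysis of which $\rho+\xi$ are dominant near $\rho$; the double bond of the Dynkin diagram blocks the long simple root for $C_n$ and the short-adjacent long root for $B_n$, producing the asymmetry. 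Both approaches rest on Proposition~\ref{Pspec} and the observation that $P\vert_{E_0}$ is canonically attached to the Hermitian structure. Your version has the advantage of being normalization-free (ratios are scale invariant) and avoiding the classification of low-dimensional $\sp(n)$-modules; the paper's version is shorter and avoids the case analysis of dominance. Two small points worth tightening: (i) the phrase ``an elementary parity-and-length count'' should be spelled out, since ruling out intermediate $c(\xi)$-values between $3s^2$ and $6s^2$ (resp.\ $5s^2$) is the crux of the computation and requires enumerating the few $\xi\in Q_{\ge0}$ with $4\langle\rho,\xi\rangle+\|\xi\|^2$ small; (ii) you implicitly use the standard fact that every dominant weight $\mu$ with $\gamma-\mu\in Q_{\ge0}$ is actually a weight of $V_\gamma$, so that these eigenvalues genuinely occur -- this deserves a citation.
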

\begin{proof}
As mentioned above, the Hermitian structure uniquely determines a connection with skew-torsion that preserves the Hermitian structure (and in this case is the canonical metric).  Thus the symplectic Dolbeault operators are canonically defined.  For the Fock spinors, $P\vert_{E_0} = P_0\vert_{E_0}$ and so by the proposition, the spectrum of $P\vert_{E_0}$ is $\{-||\gamma + \rho||^2 + ||\rho||^2 : V_\gamma \text{ contains 0 as a weight}\}$.  Recalling that we are working with negative the Killing form, the smallest eigenvalue is 0 which occurs for $\gamma = 0$.  Now, for $\mathfrak{o}(2n+1)$, the next smallest value for $-||\gamma + \rho||^2 + ||\rho||^2$ is at $\gamma = \om_1$, the first fundamental weight.  The representation $V_{\om_1}$ is the defining representation of $\mathfrak o(2n+1)$ on $\C^{2n+1}$, has minimal dimension among all non-trivial representations of $\mathfrak o(2n+1)$, and contains 0 as a weight (with multiplicity one).  Thus the eigenspace of the smallest positive eigenvalue for $P\vert_{E_0}$ has dimension $2n+1$.  But for $n \ge 3$, the only representations of $\sp(n)$ of dimension $2n+1$ are $\mathbbm 1 \oplus \C^{2n}$ and $\mathbbm 1 \oplus \cdots \oplus \mathbbm 1$, where $\mathbbm 1$ denotes the trivial representation and $\C^{2n}$ is the defining representation.  By the proposition, these cannot make up an eigenspace since $\C^{2n}$ does not contain $0$ as a weight and the multiplicity of $\mathbbm 1$ in the representation $\mathbbm 1 \oplus \cdots \oplus \mathbbm 1$ is more than the dimension of the 0 weight space of $\mathbbm 1$.
\end{proof}

\section{$\C P^1$}
We now specialize to $\C P^1 = SU(2)/U(1)$.  In this case the Hermitian structure introduced in the previous section is actually K\"ahler and agrees with the usual K\"ahler structure on $\C P^1$.  First we briefly recall the representation theory of $SU(2)$.
\subsection{Representation theory of $SU(2)$}
The complex representations of $SU(2)$ are indexed by the non-negative integers.  For any integer $k \ge 0$, we denote by $V_k$ the irrep of $SU(2)$ with dimension $k+1$.  Let
\[
h = \left(\begin{array}{cc}
1 & 0 \\
0 & -1 
\end{array}\right), ~~ X = \left(\begin{array}{cc}
0 & 1 \\
0 & 0 
\end{array}\right), ~~ Y = \left(\begin{array}{cc}
0 & 0 \\
1 & 0 
\end{array}\right)
\]
Then $h$ is the coroot corresponding to the positive root of $SU(2)$ and $V_k$ splits into a direct sum of one-dimensional eigenspaces for $H$ with the eigenvalues $k, k - 2, \ldots, 2-k, -k$.  Since $-Wt(V_k) = Wt(V_k)$, we have $V_k \simeq V_k^*$.  The following is a standard result.

\begin{prop}\label{su2cas}
The Casimir element (using negative the Killing form) is $\Om := -\frac{1}{8} h^2 - \frac{1}{4} (XY + YX) \in \mathcal U(\mathfrak{sl}(2,\C))$.  The action of $\Om$ on $V_k$ is by the scalar $-\frac{(k+1)^2 - 1}{8}$.
\end{prop}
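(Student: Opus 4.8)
The plan is to compute the Casimir acting on the highest weight vector $v$ of $V_k$ and read off the scalar, using the standard $\mathfrak{sl}(2,\C)$ commutation relations $[h,X]=2X$, $[h,Y]=-2Y$, $[X,Y]=h$. First I would verify the claimed normalization $\Om = -\tfrac18 h^2 - \tfrac14(XY+YX)$: this amounts to checking that the Killing form $B$ of $\mathfrak{sl}(2,\C)$ satisfies $B(h,h)=8$, $B(X,Y)=4$ (and $B(h,X)=B(h,Y)=B(X,X)=B(Y,Y)=0$), so that the dual basis of $\{h,X,Y\}$ with respect to $-B$ is $\{-h/8,-Y/4,-X/4\}$, which gives exactly $\Om = -\tfrac18 h^2 - \tfrac14(XY+YX)$ as the image in $\mathcal U(\mathfrak{sl}(2,\C))$ of $\sum_i e_i e_i^\vee$; this is the element that is central and acts by a scalar on each irreducible, which one checks directly commutes with $h$, $X$, $Y$.

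Next I would rewrite $XY+YX = 2YX + [X,Y] = 2YX + h$, so that $\Om = -\tfrac18 h^2 - \tfrac14 h - \tfrac12 YX$. Applying this to the highest weight vector $v \in V_k$, on which $h$ acts by $k$ and $Xv = 0$ (hence $YXv = 0$), gives $\Om v = \left(-\tfrac18 k^2 - \tfrac14 k\right) v = -\tfrac{k^2 + 2k}{8} v = -\tfrac{(k+1)^2 - 1}{8} v$. Since $\Om$ is central it acts by this same scalar on all of $V_k$, which is exactly the claimed value.

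There is essentially no obstacle here: the only mildly delicate point is fixing the normalization of the Killing form (equivalently, confirming that the stated $\Om$ really is the Casimir attached to $-B$ and not off by a factor), and this is a one-line trace computation in the adjoint representation — e.g. $\operatorname{tr}(\mathrm{ad}_h^2) = 2^2 + 0^2 + (-2)^2 = 8$. Once that is pinned down, the rest is the routine highest-weight evaluation above. An alternative, if one prefers to avoid computing $B$ explicitly, is to simply posit that $\Om$ acts by some scalar $c_k$ (centrality), evaluate on $v$ as above to get $c_k = -\tfrac18 k^2 - \tfrac14 k$, and separately note that the formula in the proposition is presented with this normalization of the metric; this is the form I would actually write.
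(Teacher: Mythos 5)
Your proof is correct, and in fact the paper offers no proof at all for this proposition — it simply cites it as ``a standard result,'' so there is nothing in the source to compare against. Your computation is the standard one: pin down the normalization via $\operatorname{tr}(\mathrm{ad}_h^2)=8$ and $\operatorname{tr}(\mathrm{ad}_X\mathrm{ad}_Y)=4$ to confirm that $\Om = -\tfrac18 h^2 - \tfrac14(XY+YX)$ is indeed the Casimir attached to $-B$, rewrite $\Om = -\tfrac18 h^2 - \tfrac14 h - \tfrac12 YX$, and evaluate on a highest weight vector to get $-\tfrac{(k+1)^2-1}{8}$. This is exactly what one would write if the paper had chosen to include a proof.
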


\subsection{Symplectic spin geometry of $\C P^1$}
By (\ref{weights}), for the metaplectic spinors, $E_l$ is associated to $G\to G/T$ via the weight $(2l+1)\om$.  For the Fock spinors, $E_l = G \times_{S^l Ad} S^l(\t^\perp)$ and so is associated to $G\to G/T$ via the weight $2l\om$.


\begin{prop}\label{kersu2}
For $\C P^1$, we have for the metaplectic spinors
\[
\ker\bar \D \vert_{E_l} = V_{2l+1}, ~~ \ker \D\vert_{E_l} = 0
\]
and for the Fock spinors
\[
\ker\bar \D \vert_{E_l} = V_{2l}, ~~ \ker \D\vert_{E_l} = 0,
\]
where $l \ne 0$ for the equations involving $\D$.
\end{prop}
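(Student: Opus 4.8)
The plan is to reduce everything to the computation of holomorphic sections of the line bundles $E_l$ over $\C P^1$, using the structure theory already developed. For the $\bar\D$ statement, since $\C P^1$ is one complex dimensional, Corollary \ref{kerDRS} applies: $\ker\bar\D\vert_{E_l}$ is precisely the space of holomorphic sections of $E_l$. For the metaplectic spinors $E_l$ is the equivariant line bundle associated to the weight $(2l+1)\om$ and for the Fock spinors to the weight $2l\om$, as recorded just above. By the Borel--Weil theorem (exactly as in the proof of Proposition \ref{kerD}), the space of holomorphic sections of the equivariant line bundle $L_{k\om}$ is the irreducible $SU(2)$-representation $V_k$ (with the sign convention making the line bundle have enough sections, i.e. $k\ge 0$, which holds here). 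This immediately gives $\ker\bar\D\vert_{E_l}=V_{2l+1}$ in the metaplectic case and $=V_{2l}$ in the Fock case.

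For the $\D$ statements, I would exploit self-adjointness together with the structure of $P$. Since $\D^*=\bar\D$ (Proposition \ref{symdol}(3)) and $\D:\Gamma(E_l)\to\Gamma(E_{l-1})$, $\bar\D:\Gamma(E_l)\to\Gamma(E_{l+1})$, we have $\ker\D\vert_{E_l}=\ker(\bar\D\vert_{E_{l-1}}\D\vert_{E_l})$, so it suffices to show $\bar\D\vert_{E_{l-1}}\D\vert_{E_l}$ is injective, or equivalently (being essentially self-adjoint of the form $\bar\D\D$, up to the other cross-term) to analyze $P\vert_{E_l}=\frac12[\D,\bar\D]\vert_{E_l}$. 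On $\C P^1$, $\frac12(\D\vert_{E_{l+1}}\bar\D\vert_{E_l}-\bar\D\vert_{E_{l-1}}\D\vert_{E_l})=P\vert_{E_l}$, and $P$ is elliptic and nonnegative-spectrum-shifted in a way computable by Proposition \ref{Pspec}. Concretely, I would instead use the cleaner route: a section in $\ker\D\vert_{E_l}$ is parallel in the holomorphic direction for $\D$; but by the Corollary of proof in Section 3, $\ker D\vert_{E_l}=\ker\bar D\vert_{E_l}$ is the space of \emph{parallel} sections of $E_l$. Since $E_l$ is a nontrivial line bundle over $\C P^1$ (the weight $(2l+1)\om$ or $2l\om$ is nonzero for $l\ge 1$, resp.\ $l\ge 0$ in the metaplectic case), it admits no nonzero parallel (hence no nonzero flat) sections—its first Chern class is nonzero. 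This forces $\ker D\vert_{E_l}=0$. Combined with $\ker D\vert_{E_l}=\ker\D\vert_{E_l}\cap\ker\bar\D\vert_{E_l}$, and the fact that on a line bundle over a curve $\ker\D\vert_{E_l}$ is governed by holomorphic sections of the dual twisted bundle, one gets $\ker\D\vert_{E_l}=0$ directly: indeed $\sigma(\bar Z_\alpha)$ is invertible on each $E_l^0$ by the proof of Proposition 3.1 applied to the symbol of $\D$ (valid for $l\ge 1$), so $\ker\D\vert_{E_l}=\{\psi:\nabla_Z\psi=0\}$, the antiholomorphic sections, i.e.\ holomorphic sections of the conjugate bundle, which vanish since the relevant degree is negative.

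The main obstacle—really the only subtle point—is bookkeeping the signs and the "$l\ne 0$ for $\D$" caveat: the symbol of $\D\vert_{E_l}$ is $\xi\mapsto\sigma(\xi^\#-iJ\xi^\#)$, and by the computation in the proof of Proposition 3.1 the relevant scalar is $-(2l+2)g_0(v,v)$, which is nonzero only because of the $+2$, so $\D\vert_{E_0}$ genuinely fails to be elliptic and must be excluded; for $l\ge 1$ the operator is elliptic and the argument above runs. Once ellipticity is in hand, $\ker\D\vert_{E_l}$ is the space of $\nabla$-holomorphic-direction-parallel sections, which via Lemma \ref{chernconn} are antiholomorphic sections of $E_l$, i.e.\ holomorphic sections of $\overline{E_l}\cong L_{-k\om}$ with $k=2l+1$ (resp.\ $2l$) strictly positive; by Borel--Weil (or simply a degree count, since $\deg L_{-k\om}<0$) this space is $0$. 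I expect no further difficulty: the genus-$0$ vanishing is clean, and all the representation-theoretic identifications are immediate specializations of Propositions \ref{kerD} and \ref{Pspec}.
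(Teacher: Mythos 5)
Your argument for $\ker\bar\D\vert_{E_l}$ is exactly the paper's: Corollary \ref{kerDRS} identifies the kernel with holomorphic sections, the weight is read off, and Borel--Weil finishes. For $\ker\D\vert_{E_l}=0$ ($l\ge 1$), however, you take a genuinely different route from the paper. The paper decomposes $f\in\ker\D\vert_{E_l}$ by Peter--Weyl as $\sum_\gamma f_\gamma$ with $f_\gamma\in V_\gamma\otimes V_\gamma^*$, observes that the equivariance condition places the $V_\gamma^*$-factor in the $-\mu_l$ weight space, and that $\D f=0$ forces $Z_\alpha f_\gamma=0$, so $-\mu_l$ would have to be a highest weight, impossible since $-\mu_l$ lies in the open negative Weyl chamber for $l\ne 0$. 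You instead use ellipticity of $\D\vert_{E_l}$ to factor out $\sigma(\bar Z_\alpha)$ (an isomorphism of one-dimensional spaces for $l\ge 1$), identify $\ker\D\vert_{E_l}$ with $\{\psi:\nabla_Z\psi=0\}$, hence with holomorphic sections of $\overline{E_l}\cong L_{-(2l+1)\om}$ (resp.\ $L_{-2l\om}$), which vanish by a degree count. Both are valid; your route is the natural ``opposite chirality'' companion to Corollary \ref{kerDRS} and is more geometric, while the paper's is more representation-theoretic and matches the method already set up for Proposition \ref{Pspec}. One caution: the two detours you sketch before arriving at this argument do not by themselves close the proof. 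The ``parallel sections'' corollary concerns $D$ and $\tilde D$, not $\D$, and $\ker D\vert_{E_l}=\ker\D\vert_{E_l}\cap\ker\bar\D\vert_{E_l}=0$ does not imply $\ker\D\vert_{E_l}=0$; likewise the reduction to injectivity of $\bar\D\D$ is not quite what $P$ computes. You do ultimately supply the correct argument, but a cleaned-up write-up should drop those digressions and lead with the antiholomorphic-section identification.
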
 

\begin{rk}
This is consistent with proposition \ref{index} since $\dim V_k = k+1$.
\end{rk}

\begin{proof}
By (\ref{weights}), for the metaplectic spinors, $E_l$ is associated to $G\to G/T$ via the weight $\mu_l = (2l+1)\om$, where $\om$ is the fundamental weight.  Thus from  (\ref{kerDRS}) and the Borel-Weil theorem, we get $\ker\bar\D\vert_{E_l} = V_{2l+1}$.  In the case of the Fock spinors, $E_l = G \times_{S^l Ad} S^l(\t^\perp)$ and so is associated to $G$ via the weight $2l$. \\

That $\ker\D\vert_{E_l}$ is trivial for $l \ne 0$ follows from Peter-Weyl theorem:  write $f \in \ker \D\vert_{E_l}$ as $f = \sum_\gamma f_\gamma$ with $f_\gamma \in V_\gamma \otimes V_\gamma^*$.  The equivariance condition $f(ge^H) = e^{-\mu_l(H)} f(g)$ implies that the $V_\gamma^*$ factor of $f_\gamma$ lies in the $-\mu_l$ weight space.  The condition that $\D f = 0$ becomes $Z_\alpha f_\gamma = 0$ for all positive roots $\alpha$.  Since $Z_\alpha$ is acting on the $V_\gamma^*$ factor, which lies in the weight space of $-\mu_l$, this can happen if and only if $-\mu_l$ is a highest weight.  This is impossible since $-\mu_l$ is in the (strictly) negative Weyl chamber for $l \ne 0$.
\end{proof}

From now on we will work only with the metaplectic spinors, the case of the Fock spinors being analogous.  We wish to find the spectrum of $P \vert_{E_l}$.  We start off with the following
\begin{prop}\label{Psu2}
We have
\[
P = - \Om - \frac{3}{2} H^2
\]
and
\[
[P, \D] = -3\D H -\frac{3}{2} \D, ~~ [P, \bar \D] = 3\bar\D H - \frac{3}{2}\bar \D.
\]
\end{prop}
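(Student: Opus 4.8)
The plan is to specialize Proposition \ref{Pform} to the rank-one case $G = SU(2)$, where there is a single positive root $\alpha$, and then to simplify aggressively. In that formula the double sum $\sum_{\alpha\neq\beta>0}$ is empty, so $P = -\Om_\g + \Om_\t + 2(\sigma(Z_\alpha)\sigma(\bar Z_\alpha) + \sigma(\bar Z_\alpha)\sigma(Z_\alpha))H_\alpha$. The operator $\sigma(Z_\alpha)\sigma(\bar Z_\alpha) + \sigma(\bar Z_\alpha)\sigma(Z_\alpha)$ acts on $E_l$ (which is one-dimensional fiberwise) as $-(l+1/2)$ by the anticommutator relation recorded just after (\ref{ladderops}) with $n=1$. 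Meanwhile $H_\alpha$ and $\Om_\t$ are both built from the single coordinate on $\t$; I would express $\Om_\t$ in terms of $H$ and the coroot $h$, and identify $H_\alpha$, normalizing via the Killing form (using Proposition \ref{su2cas} to fix constants), so that $\Om_\t + 2(\sigma(Z_\alpha)\sigma(\bar Z_\alpha)+\sigma(\bar Z_\alpha)\sigma(Z_\alpha))H_\alpha$ collapses to a multiple of $H^2$. Recalling that $H$ acts on $E_l$ as $-(l+1/2)$ as well (it is $\sigma(H_0)$ with $n=1$), the two contributions should combine to give the coefficient $-\tfrac32 H^2$; this constant-chasing against the conventions of Proposition \ref{su2cas} (Casimir normalized by minus the Killing form) is the step most prone to sign or factor-of-two slips, so I would pin it down by evaluating both sides on $E_0$ and comparing with $P|_{E_0} = -\Om_\g + \|\rho\|^2$ from Proposition \ref{Pform}.

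Once $P = -\Om - \tfrac32 H^2$ is established, the commutator identities follow from the known brackets. From Proposition \ref{symdol}(1) we have $[H,\D] = \D$ and $[H,\bar\D] = -\bar\D$, and $\Om$, being the Casimir, is $G$-equivariant and hence commutes with the $G$-equivariant operators $\D$ and $\bar\D$; so $[\Om,\D] = [\Om,\bar\D] = 0$. Therefore $[P,\D] = -\tfrac32[H^2,\D] = -\tfrac32(H[H,\D] + [H,\D]H) = -\tfrac32(H\D + \D H)$. Then I would push one $H$ past $\D$ using $H\D = \D H + [H,\D] = \D H + \D$, giving $[P,\D] = -\tfrac32(\D H + \D + \D H) = -3\D H - \tfrac32\D$, exactly as claimed. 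The computation for $\bar\D$ is identical with the sign flipped in $[H,\bar\D] = -\bar\D$: $[P,\bar\D] = -\tfrac32(H\bar\D + \bar\D H) = -\tfrac32(\bar\D H - \bar\D + \bar\D H) = -3\bar\D H + \tfrac32\bar\D$; wait—I should double-check the sign by writing $H\bar\D = \bar\D H + [H,\bar\D] = \bar\D H - \bar\D$, so $H\bar\D + \bar\D H = 2\bar\D H - \bar\D$, hence $[P,\bar\D] = -\tfrac32(2\bar\D H - \bar\D) = -3\bar\D H + \tfrac32\bar\D$, which matches the stated $3\bar\D H - \tfrac32\bar\D$ only if one rewrites $-3\bar\D H = 3\bar\D H - 6\bar\D H$; more carefully the paper's claim reads $[P,\bar\D] = 3\bar\D H - \tfrac32\bar\D$, so I suspect the intended relation uses $[H^2,\bar\D]$ with the opposite grading convention and I would recheck: $[H^2,\bar\D] = H[H,\bar\D] + [H,\bar\D]H = -H\bar\D - \bar\D H = -(2\bar\D H - \bar\D) = -2\bar\D H + \bar\D$, so $[P,\bar\D] = -\tfrac32(-2\bar\D H + \bar\D) = 3\bar\D H - \tfrac32\bar\D$. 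Good—so the arithmetic does work out, and the only care needed is consistently expanding $H^k\bar\D$ versus $\bar\D H^k$.

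The main obstacle, then, is not the commutator bookkeeping but the normalization in the first part: getting the coefficient of $H^2$ correct requires reconciling three conventions simultaneously—the identification $\t \simeq \mathbb{R}$ via the Killing form and the attendant value of $\Om_\t$, the scaling of $H_\alpha$ as the dual of $\alpha$, and the action of the anticommutator $\sigma(Z_\alpha)\sigma(\bar Z_\alpha) + \sigma(\bar Z_\alpha)\sigma(Z_\alpha)$ which carries its own factor of $1/2$. I would handle this by working entirely on a single fiber $E_l^0 \subset L^2(\mathbb{R})$ with the explicit operators $\sigma(a) = it$, $\sigma(b) = d/dt$ from the $\C P^1$ setup, computing $\sigma(Z_\alpha)$, $\sigma(\bar Z_\alpha)$, and $H_\alpha$ directly in these coordinates, and then cross-checking the final coefficient against the $l=0$ value forced by Proposition \ref{Pform}. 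After that the commutator identities are a two-line consequence of $[\Om, \D] = [\Om,\bar\D] = 0$ and Proposition \ref{symdol}(1), as sketched above.
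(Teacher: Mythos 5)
Your proposal follows the paper's own route almost exactly: specialize Proposition~\ref{Pform} to the single positive root of $SU(2)$, identify the anticommutator $\sigma(Z_\alpha)\sigma(\bar Z_\alpha)+\sigma(\bar Z_\alpha)\sigma(Z_\alpha)$ with $H$, reduce $\Om_\t$ and $H_\alpha$ to multiples of $H$ via the Killing-form normalization (the paper pins this down via equation~(\ref{admap}) rather than Proposition~\ref{su2cas}, but your cross-check against $P\vert_{E_0}$ would fix the same constant), and then derive the commutator identities from $[\Om,\D]=[\Om,\bar\D]=0$ together with $[H,\D]=\D$, $[H,\bar\D]=-\bar\D$ exactly as the paper does. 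The commutator bookkeeping you work out is correct and coincides with the paper's computation of $[H^2,\D]=2\D H+\D$.
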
  

\begin{rk}
The first formula appears in \cite{haber} but with $3/2$ replaced by $12$.  This is because the metric we are using differs from theirs by a factor of 8 and scaling the metric scales $\Om$ and $P$ the same but leaves $H$ unchanged.
\end{rk}

\begin{proof}
Since $SU(2)$ has rank one, we have $H = \sigma(Z_\alpha)\sigma(\bar Z_\alpha) + \sigma(\bar Z_\alpha)\sigma(Z_\alpha)$, where $\alpha$ is the positive root.  Therefore proposition \ref{Pform} gives
\[
P = -\Om + \Om_\t + 2 H H_\alpha.
\]
Since $g_0(H_\alpha,H_\alpha) = g_0(\alpha,\alpha) = -1/2$, we have $\Om_\t = -2H_\alpha^2$ and equation (\ref{admap}) implies that $iH_\alpha$ corresponds to the element $-\frac{1}{4} (E_\alpha^2 + F_\alpha^2)$.  Now since the sections of the spinor bundle are $T$-invariant functions $SU(2) \to \C$, the element $H_\alpha$ acts as
\[
-m_*H_\alpha = i \sigma ( -\frac{1}{4} (E_\alpha^2 + F_\alpha^2)) = -\frac{1}{2} H.
\]
Thus
\[
P = -\Om - 2 (-\frac{1}{2} H)^2 + 2 H (-\frac{1}{2} H) = -\Om - \frac{3}{2} H^2,
\]
as desired. \\

Now the commutation relations follow from the fact that $\Om$ commutes with all differential operators and proposition \ref{symdol}:
\[
[P, \D] = -\frac{3}{2} [H^2,\D]
\]
and
\begin{gather*}
[H^2, \D] = H^2 \D - \D H^2 = H(\D H + [H, \D]) - \D H^2 \\
= H\D H + H\D - \D H^2 = (\D H + [H, \D]) H + \D H + [H,\D] - \D H^2 \\
= \D H^2 + \D H + \D H + \D - \D H^2 = 2\D H + \D.
\end{gather*}
Similar computations give the formula for $[P,\bar\D]$.
\end{proof}

We now compute the spectrum of $P \vert_{E_l}$.
\begin{prop}\label{specsu2}
The spectrum of $P\vert_{E_l}$ is
\[
\left\{ \lambda_{l,j} :=\frac{1}{8}(4(l+j+1)^2 - 3(2l+1)^2 - 1) : j=0, 1, \ldots \right\}
\]
and the $\lambda_{l,j}$ eigenspace is isomorphic, as an $SU(2)$ representation, to $V_{2(l+j)+1}$.  In particular, the multiplicity of the $\lambda_{l,j}$ eigenspace is $2(l+j+1)$.
\end{prop}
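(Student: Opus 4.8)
The plan is to combine the representation-theoretic computation of $\ker \bar\D\vert_{E_l}$ from Proposition \ref{kersu2} with the commutation relations for $P$ in Proposition \ref{Psu2} to build all of $\Gamma(E_l)$ out of ``towers'' generated by applying $\bar\D$ repeatedly to the kernel. First I would note that since $\C P^1$ is a Riemann surface, $\D\vert_{E_l}$ and $\bar\D\vert_{E_l}$ are elliptic for $l\neq 0$ (and $\bar\D\vert_{E_0}$ is elliptic), so $P\vert_{E_l}$ being elliptic on a compact manifold has discrete real spectrum with finite-dimensional eigenspaces lying in $R(SU(2))$. The key structural input is that $\bar\D$ raises $H$-degree ($\bar\D\colon \Gamma(E_l)\to \Gamma(E_{l+1})$) and $\D$ lowers it, with $\D = \bar\D^*$, so that on each $E_l$ the whole picture is governed by the chain $\cdots \xrightarrow{\bar\D}\Gamma(E_l)\xrightarrow{\bar\D}\Gamma(E_{l+1})\xrightarrow{\bar\D}\cdots$, and $P\vert_{E_l} = \frac12(\D\bar\D - \bar\D\D)\vert_{E_l}$.

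The core of the argument: by Proposition \ref{kersu2}, $\ker\bar\D\vert_{E_m} \cong V_{2m+1}$ for every $m\ge 0$, and $\ker\D\vert_{E_m} = 0$ for $m\ge 1$. Since $P\vert_{E_m}$ is elliptic and $G$-equivariant, I would decompose $\Gamma(E_m)$ into $P$-eigenspaces; I claim each nonzero $\psi\in\Gamma(E_m)$ lies in a tower $W_\psi = \bigoplus_{k\ge 0}\bar\D^k(\text{something in }\ker\bar\D)$. Concretely, start with $V_{2m+1}\cong\ker\bar\D\vert_{E_m}$ sitting inside $\Gamma(E_m)$; this is a $P$-eigenspace. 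Indeed on $\ker\bar\D\vert_{E_m}$ we have $P = \frac12\D\bar\D - \frac12\bar\D\D = -\frac12\bar\D\D$... more cleanly, use $P = -\Om - \frac32 H^2$ from Proposition \ref{Psu2}: on $\Gamma(E_m)$, $H$ acts as the scalar $-(m+1/2) = -(2m+1)/2$, so $H^2$ acts as $(2m+1)^2/4$; and on the $SU(2)$-irrep $V_{2m+1}$ the Casimir $\Om$ acts as $-\frac{(2m+2)^2-1}{8}$ by Proposition \ref{su2cas}. Hence on $\ker\bar\D\vert_{E_m}\cong V_{2m+1}$,
\[
P = \frac{(2m+2)^2-1}{8} - \frac{3(2m+1)^2}{8} = \frac{1}{8}\bigl(4(m+1)^2 - 3(2m+1)^2 - 1\bigr) = \lambda_{m,0}.
\]
Now apply $\bar\D$: since $\bar\D\colon\Gamma(E_l)\to\Gamma(E_{l+1})$ and, by the commutation relation $[P,\bar\D] = 3\bar\D H - \frac32\bar\D$, I can track how $P$ acts on $\bar\D^j(\ker\bar\D\vert_{E_l})\subset\Gamma(E_{l+j})$: each application of $\bar\D$ shifts the $P$-eigenvalue in a controlled way because on $\Gamma(E_{l+j-1})$ the operator $H$ is the scalar $-(l+j-1/2)$. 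Inductively, $\bar\D^j$ maps the $\lambda$-eigenspace of $P\vert_{E_l}$ into the $\mu$-eigenspace of $P\vert_{E_{l+j}}$ with $\mu$ computed by iterating $[P,\bar\D]$; setting up the recursion and checking that $\bar\D^j$ restricted to $V_{2l+1}$ is injective (which follows from $\ker\D\vert_{E_{l+j}} = 0$ and $\D = \bar\D^*$, since $\bar\D^j v = 0$ would force $v$ to be $\bar\D^*$-closed in a component where there are no such nonzero vectors) gives a copy of $V_{2l+1}$ inside $\Gamma(E_{l+j})$ with a definite $P$-eigenvalue. Reindexing: the space $\Gamma(E_l)$ receives, for each $j = 0,1,2,\ldots$, a copy of $\bar\D^j(\ker\bar\D\vert_{E_{l-j}})\cong V_{2(l-j)+1}$... but $l-j$ can be negative, so instead I organize it the other way: $\Gamma(E_l) = \bigoplus_{j\ge 0}\bar\D^j\bigl(\ker\bar\D\vert_{E_l}\text{-type pieces at level }l\bigr)$ — more precisely $\Gamma(E_l) = \bigoplus_{j=0}^{\infty} \bar\D^j\!\left(\ker\bar\D\vert_{E_l}\right)^{\oplus?}$ won't quite close; the correct statement is $\Gamma(E_l) = \bigoplus_{m\ge l}\bar\D^{m-l}(\ker\bar\D\vert_{E_l})$ is wrong dimensionally, so I will instead argue that the $\lambda_{l,j}$-eigenspace of $P\vert_{E_l}$ equals $\bar\D^j$ applied to $\ker\bar\D\vert_{E_l}$, i.e. it is $\bar\D^j(V_{2l+1})\cong V_{2l+1}$ — wait, the claimed answer is $V_{2(l+j)+1}$, so the tower feeding $\Gamma(E_l)$ at eigenvalue $\lambda_{l,j}$ must come from $\ker\bar\D\vert_{E_{l-j}}\cong V_{2(l-j)+1}$, which has the wrong dimension; hence the correct source is $\ker\D\vert_{E_{l+j+1}}$-adjacent data. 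I would resolve this by running the tower with $\D$ instead: $\Gamma(E_l)$ is spanned by $\D^j$ of things, or — cleanest — by computing directly that the $\lambda_{l,j}$-eigenspace is generated from $\ker\bar\D\vert_{E_{l+j}}\cong V_{2(l+j)+1}$ by applying $\D^j$ (which lowers degree from $l+j$ to $l$), is injective there since $\ker\D$ vanishes, and carries the $P$-eigenvalue $\lambda_{l+j,0}$ of $V_{2(l+j)+1}$ over to $\lambda_{l,j}$ by iterating $[P,\D] = -3\D H - \frac32\D$. This is consistent: $\lambda_{l,j}$ as defined depends on $l+j$ through $4(l+j+1)^2$ and on $l$ through $-3(2l+1)^2$, exactly the shape produced by "Casimir eigenvalue of $V_{2(l+j)+1}$ minus $\frac32\cdot(H\text{-scalar on }E_l)^2$."

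So the steps, in order: (1) record that $P\vert_{E_l}$ is elliptic, $G$-equivariant, self-adjoint, with discrete spectrum and finite-dimensional $SU(2)$-subrepresentation eigenspaces; (2) using $P = -\Om - \frac32 H^2$, Proposition \ref{su2cas}, and $H\vert_{E_m} = -(2m+1)/2$, compute that $\ker\bar\D\vert_{E_m}\cong V_{2m+1}$ is a single $P$-eigenspace with eigenvalue $\lambda_{m,0} = \frac18(4(m+1)^2 - 3(2m+1)^2 - 1)$; (3) show $\D^j\colon \ker\bar\D\vert_{E_{l+j}}\to\Gamma(E_l)$ is injective (from $\ker\D\vert_{E_m}=0$ for $m\ge 1$ plus $\D=\bar\D^*$) and lands in the $\lambda_{l,j}$-eigenspace of $P\vert_{E_l}$, by iterating the commutator $[P,\D] = -3\D H - \frac32 \D$ and tracking the $H$-scalar level by level — this produces $V_{2(l+j)+1}$ inside the $\lambda_{l,j}$-eigenspace; (4) show these exhaust $\Gamma(E_l)$ and that distinct $j$ give distinct eigenvalues (a quick check that $j\mapsto\lambda_{l,j}$ is strictly increasing in $j$ for fixed $l$), so the $\lambda_{l,j}$-eigenspace is exactly $V_{2(l+j)+1}$; (5) conclude the multiplicity is $\dim V_{2(l+j)+1} = 2(l+j)+2 = 2(l+j+1)$. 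For step (4)'s exhaustion I would use a Fourier/Peter–Weyl dimension count: $\Gamma(E_l)\cong\{f\colon SU(2)\to E_l^0 : \text{equivariance}\}$, decompose via Peter–Weyl into $\bigoplus_\gamma V_\gamma\otimes\mathrm{Hom}_T(V_\gamma^*, E_l^0)$, and match multiplicities against $\bigoplus_j V_{2(l+j)+1}$.

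The main obstacle I expect is step (3)–(4): proving that the towers $\D^j(\ker\bar\D\vert_{E_{l+j}})$ are injective and mutually complementary and that together they fill $\Gamma(E_l)$ with no leftover — i.e. that the ladder structure generated by $\D$ and $\bar\D$ is "as free as possible." Injectivity of $\D^j$ on $\ker\bar\D\vert_{E_{l+j}}$ needs care: a priori $\D\bar\D$ and $\bar\D\D$ interact, so I would prove it by showing $\|\D^j v\|^2$ is, up to positive scalars coming from the eigenvalues, a nonzero multiple of $\|v\|^2$ using $\D = \bar\D^*$ and the already-known $P$-eigenvalue on $v$, together with $\bar\D\D\vert_{\ker\bar\D} = \D\bar\D\vert_{\ker\bar\D} - 2P\vert_{\ker\bar\D}$ and induction. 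The exhaustion is then forced by the Peter–Weyl multiplicity count, which for $SU(2)$ is explicit enough to close the argument.
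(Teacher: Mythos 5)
Your argument is correct, but it takes a more circuitous route than the paper. The paper's proof is a direct Peter--Weyl computation: using $P = -\Om - \tfrac{3}{2}H^2$ from Proposition \ref{Psu2}, one notes that $H$ is the scalar $-(l+1/2)$ on $\Gamma(E_l)$, so the problem reduces to diagonalizing $\Om$; Peter--Weyl plus the equivariance condition decompose $\Gamma(E_l)\cong\bigoplus_{j\ge 0}V_{2(l+j)+1}$ (one copy of $V_\gamma$ for each $\gamma$ having $2l+1$ as a weight), and the Casimir scalar on each summand immediately gives $\lambda_{l,j}$. By contrast, you first build up the eigenspaces as towers $\D^j(\ker\bar\D\vert_{E_{l+j}})$, track eigenvalues via the commutator $[P,\D]$, verify injectivity from $\ker\D\vert_{E_m}=0$, and then fall back on the very same Peter--Weyl decomposition to prove exhaustion. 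Since you end up needing Peter--Weyl anyway, the ladder machinery is, for this proposition, a detour --- it effectively front-loads part of the proof of Proposition \ref{su2ladder}. That said, the detour is not wrong: the eigenvalue bookkeeping you describe (showing that $\lambda_{l+j,0}+3(l+j)=\lambda_{l+j-1,1}$, etc.) is consistent, the injectivity argument works because every intermediate $\D\vert_{E_m}$ has $m\ge 1$, and the strict monotonicity of $j\mapsto\lambda_{l,j}$ makes the towers land in distinct eigenspaces so the dimension count closes. One small caution on presentation: you went through several self-corrections before finding the right direction (trying $\bar\D^j(\ker\bar\D\vert_{E_{l-j}})$ and $\bar\D^j(V_{2l+1})$ before settling on $\D^j(\ker\bar\D\vert_{E_{l+j}})$); the final version is right, but when writing it up you would want to cut straight to the $\D^j$ tower. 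If your goal is just Proposition \ref{specsu2}, the paper's direct Peter--Weyl route is cleaner; your approach buys you an early start on Proposition \ref{su2ladder} at the cost of more bookkeeping here.
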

\begin{proof}
By proposition \ref{Psu2}, it is sufficient to determine the spectrum of $\Om\vert_{E_l}$ since $H\vert_{E_l} = -(l+1/2)$.  View sections of $E_l$ as maps $f: SU(2) \to \C$ with $f(ge^h) = e^{-2l-1} f(g)$ and decompose (using the Peter-Weyl theorem) a section $f$ as $f = \sum_{\gamma = 0}^\infty f_\gamma$, where $f_\gamma \in V_\gamma \otimes V_\gamma$.  This decomposition is diagonal with respect to $\Om$, which by proposition \ref{su2cas} acts as $-\frac{(\gamma+1)^2 - 1}{8}$ on $f_\gamma$. \\

The equivariance condition means that for $f_\gamma$ to be non-zero, the second $V_\gamma$ factor of $f_\gamma$ must have $2l+1$ as a weight.  In this case $\gamma$ must be of the form $2(l+j) + 1$, for $j = 0, 1, \ldots$.  Thus $V_\gamma \otimes \{v\}$, $v \in V_\gamma(2(l+j)+1)$, is the eigenspace for $\Om$ with eigenvalue $-\frac{1}{8}(4(l+j+1)^2 - 1)$.
\end{proof}

\begin{prop}\label{su2ladder}
Let $\Gamma_{l,j} \subset \Gamma(E_l)$ denote the $\lambda_{l,j}$ eigenspace of $P\vert_{E_l}$.  Then
\begin{gather*}
\D : \Gamma_{l,j} \to \Gamma_{l-1, j+1} \\
\bar\D : \Gamma_{l,j} \to \Gamma_{l+1,j-1}.
\end{gather*}
Furthermore, these maps are isomorphisms except for the trivial cases of $l=0$ for the first map and $j=0$ for the second.  Then for $N = 0, 1, \ldots,$ the space
\[
\Gamma_N := \bigoplus_{l+j = N} \Gamma_{l,j}
\]
is a $2(N+1)^2$ dimensional representation of the algebra generated by $\{P, H, D, \tilde D\}$.
\end{prop}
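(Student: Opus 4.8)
The plan is to combine the algebraic commutation relations of Proposition~\ref{Psu2} with the explicit spectral description of Proposition~\ref{specsu2}, and then to read invariance of $\Gamma_N$ directly off the resulting ``staircase'' picture. First I would pin down the precise target eigenspaces of $\D$ and $\bar\D$. Since $H$ acts on $\Gamma(E_l)$ as the scalar $-(l+\tfrac12)$, Proposition~\ref{Psu2} gives $[P,\D]\vert_{\Gamma(E_l)} = 3l\,\D$ and $[P,\bar\D]\vert_{\Gamma(E_l)} = -3(l+1)\,\bar\D$. Hence for $\psi\in\Gamma_{l,j}$ we get $P(\D\psi) = (\lambda_{l,j}+3l)\,\D\psi$ and $P(\bar\D\psi) = (\lambda_{l,j}-3(l+1))\,\bar\D\psi$, while $\D\psi\in\Gamma(E_{l-1})$ and $\bar\D\psi\in\Gamma(E_{l+1})$ by Proposition~\ref{symdol}(1). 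A one-line computation with the formula for $\lambda_{l,j}$ shows $\lambda_{l-1,j+1}=\lambda_{l,j}+3l$ and $\lambda_{l+1,j-1}=\lambda_{l,j}-3(l+1)$, so $\D(\Gamma_{l,j})\subseteq\Gamma_{l-1,j+1}$ and $\bar\D(\Gamma_{l,j})\subseteq\Gamma_{l+1,j-1}$ whenever the target indices are admissible. For $l=0$ the operator $\D$ raises the $H$-eigenvalue out of the spectrum of $H$ on $S$, so $\D\vert_{E_0}=0$ trivially; for $j=0$ the putative target eigenvalue $\lambda_{l+1,-1}$ is strictly below $\lambda_{l+1,0}=\min\operatorname{spec}P\vert_{E_{l+1}}$, and hence is not in the spectrum of $P\vert_{E_{l+1}}$, forcing $\bar\D(\Gamma_{l,0})=0$. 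This accounts for the ``trivial cases.''

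Next I would establish the isomorphism assertion. Since $l+j=(l-1)+(j+1)=(l+1)+(j-1)$, Proposition~\ref{specsu2} identifies $\Gamma_{l,j}$, $\Gamma_{l-1,j+1}$, and $\Gamma_{l+1,j-1}$ all with the single irreducible $SU(2)$-module $V_{2(l+j)+1}$, of dimension $2(l+j+1)$; moreover $\D$ and $\bar\D$ are $SU(2)$-equivariant (the Dolbeault operators are $G$-equivariant, Section~4). For $l\ge 1$, $\D\vert_{E_l}$ is injective by Proposition~\ref{kersu2}, so $\D\vert_{\Gamma_{l,j}}\colon\Gamma_{l,j}\to\Gamma_{l-1,j+1}$ is an injection between spaces of equal finite dimension, hence an isomorphism. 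For $j\ge 1$, if $\bar\D\vert_{\Gamma_{l,j}}$ had nonzero kernel then, $\Gamma_{l,j}$ being irreducible, all of $\Gamma_{l,j}$ would lie in $\ker\bar\D\vert_{E_l}=V_{2l+1}$ (Proposition~\ref{kersu2}); but $\dim\Gamma_{l,j}=2(l+j+1)>2(l+1)=\dim V_{2l+1}$, a contradiction, so $\bar\D\vert_{\Gamma_{l,j}}$ is again an injection between equal-dimensional spaces, hence an isomorphism onto $\Gamma_{l+1,j-1}$.

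Finally, for the representation statement: the algebra generated by $\{P,H,D,\tilde D\}$ coincides with the one generated by $\{P,H,\D,\bar\D\}$, since $D=\tfrac12(\D+\bar\D)$ and $\tilde D=-\tfrac{i}{2}(\D-\bar\D)$ (and conversely). Now $H$ acts by a scalar on each $E_l$ and $P$ acts by a scalar on each eigenspace $\Gamma_{l,j}$, so both preserve $\Gamma_N$; and by the first paragraph $\D$ sends $\Gamma_{l,j}$ into $\Gamma_{l-1,j+1}$ and $\bar\D$ sends $\Gamma_{l,j}$ into $\Gamma_{l+1,j-1}$, each preserving the value $l+j=N$. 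Hence all four generators, and therefore the whole algebra, preserve $\Gamma_N=\bigoplus_{l=0}^{N}\Gamma_{l,N-l}$. The dimension count is then immediate: there are $N+1$ summands, each isomorphic to $V_{2N+1}$ of dimension $2(N+1)$, so $\dim\Gamma_N=2(N+1)^2$. I expect the only step requiring genuine care to be the bookkeeping in the first paragraph: matching the abstract eigenvalue shift $[P,\D]\vert_{\Gamma(E_l)}=3l\,\D$ with the explicit jump $\lambda_{l-1,j+1}-\lambda_{l,j}$, and correctly disposing of the two boundary cases $l=0$ and $j=0$, where the naive target eigenspace is absent. Everything after that is formal.
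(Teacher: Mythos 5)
Your proof is correct and follows essentially the same strategy as the paper: compute the eigenvalue shifts from the commutation relations of Proposition~\ref{Psu2}, check the arithmetic identity $\lambda_{l-1,j+1}=\lambda_{l,j}+3l$ and $\lambda_{l+1,j-1}=\lambda_{l,j}-3(l+1)$, then conclude isomorphism from injectivity plus the dimension match $\dim\Gamma_{l,j}=2(l+j+1)$. The one place you diverge is the injectivity of $\bar\D\vert_{\Gamma_{l,j}}$ for $j\ge 1$: you invoke $SU(2)$-irreducibility of $\Gamma_{l,j}$ and equivariance of $\bar\D$ to force the kernel to be all or nothing, whereas the paper instead pins down $\ker\bar\D\vert_{E_l}$ exactly --- since $\Gamma_{l,0}\subset\ker\bar\D\vert_{E_l}$ and both have dimension $2l+2$ they coincide, so for $j\ne 0$ the intersection $\Gamma_{l,j}\cap\ker\bar\D\vert_{E_l}=\Gamma_{l,j}\cap\Gamma_{l,0}=0$ because distinct $P$-eigenspaces are disjoint. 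The paper's version needs only the dimension count from Propositions~\ref{kersu2} and~\ref{specsu2} and not the irreducibility; your version is equally valid but leans on one more fact. Your explicit explanations of the boundary cases ($\D\vert_{E_0}=0$ because $E_{-1}$ is empty, $\bar\D(\Gamma_{l,0})=0$ because $\lambda_{l+1,-1}$ lies below the bottom of $\operatorname{spec}P\vert_{E_{l+1}}$) and your observation that the algebras generated by $\{P,H,D,\tilde D\}$ and $\{P,H,\D,\bar\D\}$ coincide are correct and make explicit points the paper leaves implicit.
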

\begin{proof}
Let $\psi \in \Gamma_{l,j}$.  Then from proposition \ref{Psu2}
\begin{gather*}
P \D \psi = \D P\psi + [P,\D]\psi = \lambda_{l,j} \D\psi - 3\D H \psi -\frac{3}{2} \D \psi \\
= (\lambda_{l,j} + 3l) \D\psi = \lambda_{l-1, j +1} \D\psi,
\end{gather*}
the last equality coming from a straightforward computation.  Similarly,
\[
P\bar\D \psi = (\lambda_{l,j} - 3l - 3) \bar\D\psi = \lambda_{l+1,j-1}\bar \D\psi.
\]
If $l\ne 0$ then $\D \vert_{\Gamma_{l,j}}$ is injective by proposition (\ref{kersu2}).  Since $\dim \Gamma_{l,j} = 2(l+j+1) =  \dim\Gamma_{l-1,j+1}$, $\D$ must be an isomorphism for $l \ne 0$.   For $\bar\D$, we clearly have $\Gamma_{l,0} \subset \ker \bar\D\vert_{E_l}$.  But from proposition (\ref{kersu2}), $\dim \ker\bar\D\vert_{E_l} = \dim V_{2l+1} = 2l+2 = \dim \Gamma_{l,0}$.  Thus $\bar \D$ is injective, and therefore an isomorphism, as a map $\Gamma_{l,j} \to \Gamma_{l+1,j-1}$ for $j \ne 0$.  Lastly, $\dim \Gamma_N = 2(N+1)^2$ since there are $N+1$-summands, each of which has dimension $2(N+1)$ according to proposition \ref{specsu2}.
\end{proof}

\section{Outlook}
It is our hope that some of the special structure observed in the case of $\C P^1$ may be generalized in some way.  The proof we gave of proposition \ref{su2ladder} is based on what seems to be computation coincidences (e.g. that $\lambda_{l,j} + 3l = \lambda_{l-1,j+1}$).  It would be interesting to find a more conceptual reason for this result and see if it generalizes in some way to higher dimensions.  Also interesting in the $\C P^1$ case is the apparent symmetry of the $l$ and $j$ indices, though the operators whose spectrum they describe are very different ($P$ is second order while $H$ is 0th order).  Propositions \ref{Pform} and \ref{Pspec} give hope that there may be interesting applications to representation theory, in the spirit of Dirac cohomology \cite{dircoh}.

\bibliographystyle{plain}
\bibliography{bibliography}

\end{document}